\newcommand{\bR}{{\mathbb{R}}}
\newcommand{\bC}{{\mathbb{C}}}
\newcommand{\bE}{{\mathbb{E}}}
\newcommand{\bF}{{\mathbb{F}}}
\newcommand{\bM}{{\mathbb{M}}}
\newcommand{\bS}{{\mathbb{S}}}
\newcommand{\bZ}{{\mathbb{Z}}}
\newcommand{\cA}{{\mathcal{A}}}
\newcommand{\Ext}{{\mathrm{Ext}}}
\newcommand{\F}{\mathrm{F}}
\newcommand{\E}{\mathrm{E}}
\newcommand{\kappabar}{\bar{\kappa}}
\newcommand{\ttwo}{\widetilde{2}}
\newcommand{\<}{\langle}
\newcommand{\h}{{\mathrm{H}}}
\newcommand{\BP}{\mathrm{BP}}
\newcommand{\MU}{\mathrm{MU}}
\newcommand{\Mod}{\mathrm{Mod}}
\newcommand{\Syn}{\mathrm{Syn}}
\newcommand{\syn}{\mathrm{syn}}
\newcommand{\cl}{\mathrm{cl}}
\newcommand{\Stable}{\mathrm{Stable}}
\newcommand{\Sp}{\mathrm{Sp}}
\newcommand{\colim@}[2]{%
  \vtop{\m@th\ialign{##\cr
    \hfil$#1\operator@font colim$\hfil\cr
    \noalign{\nointerlineskip\kern1.5\ex@}#2\cr
    \noalign{\nointerlineskip\kern-\ex@}\cr}}%
}
\newcommand{\colim}{%
  \mathop{\mathpalette\colim@{\textstyle}}\nmlimits@
}
\newtheorem{theorem}{Theorem}[section]
\newtheorem*{theorem*}{Theorem}
\numberwithin{theorem}{section}
\newtheorem{proposition}[theorem]{Proposition}
\newtheorem{lemma}[theorem]{Lemma}
\newtheorem{corollary}[theorem]{Corollary}
\theoremstyle{definition}
\newtheorem{definition}[theorem]{Definition}
\newtheorem{remark}[theorem]{Remark}
\theoremstyle{definition}
\newtheorem{exmp}[theorem]{Example}
\renewcommand{\arraystretch}{1.5}
\title[$H\mathbb{F}_2$-synthetic homotopy groups of $tmf$]
{\textbf{$H\mathbb{F}_2$-synthetic homotopy groups of \\ topological modular forms}}
\author{Peter J. Marek}
\begin{document}

\begin{abstract}
To any Adams-type spectrum $E$, Pstr\k{a}gowski produced a symmetric monoidal stable $\infty$-category $\Syn_E$ whose objects are, in a sense, ``formal Adams spectral sequences''. $\Syn_E$ comes equipped with a lax symmetric monoidal functor $\nu_E:Sp\to \Syn_E$ from classical spectra, which embeds $\Sp$ fully and faithfully in $\Syn_E$, and is a category with a natural notion of bigraded homotopy groups. The bigraded homotopy groups $\pi_{*,*}\nu_EX$ systematically record information about the homotopy groups $\pi_*X$ and the $E$-Adams spectral sequence of $X$. In this paper, we compute the $\nu_{\h\bF_2}\h\bF_2$-Adams spectral sequence of $\nu_{\h\bF_2}tmf_2^{\wedge}$, synthetic versions of hidden $2$-, $\eta$-, $\nu$-, and $\kappabar$-extensions, and use this to deduce information about the homotopy ring structure of $\pi_{*,*}\nu_{\h\bF_2}tmf_2^{\wedge}$.
\end{abstract}

\maketitle

\section{Introduction}

The computation of the stable homotopy groups of spheres is a central and important problem in algebraic topology. In recent years, substantial progress has been made in understanding them at the prime $p=2$ \cite{IWX23} using the stable $\infty$-category of $2$-complete cellular motivic spectra $(\Sp_\bC)^{\wedge}_2$.

\bigskip

A primary tool used for this is the $C\tau$-philosophy in $(\Sp_\bC)^{\wedge}_2$. In short, the $C\tau$-philosophy entails using a particular $\bE_{\infty}$-ring spectrum $C\tau\in(\Sp_\bC)^{\wedge}_2$ and an isomorphism of spectral sequences \cite[Theorem 1.17]{GWX21}
\begin{equation*}
\begin{tikzcd}
\Ext_{\BP_*\BP/I}^{s,2w}(\BP_*/I, I^{a-s}/I^{a-s+1}) \ar[r,"\cong"] \ar[d, Rightarrow,"\textbf{algebraic NSS}"'] & \Ext_{\mathcal{A}^{mot}_{*,*}}^{a,2w-s+a,w}(\bM_2,H_{*,*}(C\tau)) \ar[d, Rightarrow,"\textbf{motivic Adams SS}"] \\
\Ext_{\BP_*\BP}^{s,2w}(\BP_*,\BP_*) \ar[r,"\cong"'] & \pi_{2w-s,w}C\tau
\end{tikzcd}    
\end{equation*}
to produce purely algebraic calculations of Adams differentials in the motivic Adams spectral sequence
\begin{equation*}
\begin{tikzcd}
 \Ext_{\mathcal{A}^{mot}_{*,*}}^{*,*,*}(\bM_2,\bM_2) \ar[r,Rightarrow] & \pi_{*,*}(\bS_{\bC})
\end{tikzcd}   
\end{equation*}
for the 2-complete motivic sphere spectrum. This then gives information about the classical 2-complete sphere spectrum via the Betti realization functor $\beta:\Sp_{\bC}\to\Sp$. (See \cite[Section 1.3]{GWX21} for more on the $C\tau$-philosophy).

\bigskip

For any Adams-type spectrum $E$, Pstr\k{a}gowski \cite{Pst23} produced a stable $\infty$-category $\Syn_E$, the category of $E$-\emph{synthetic spectra}, which generalizes the $C\tau$-behavior seen in $\Sp_{\bC}$ without the use of motivic methods. There are many reasons to study synthetic spectra:

\begin{itemize}
     \item One modern approach to understanding stable homotopy groups of spheres is to study the sphere spectrum (unit object) in a ``neighboring'' stable $\infty$-category and compare it to $\Sp$. Standard examples include $G$-equivariant spectra, $\bR$- and $\bC$-motivic spectra, etc. $\Syn_E$ offers another setting in which to do this.
    \item $\Syn_E$ comes equipped with an embedding $\nu_E: \Sp\to\Syn_E$ such that the $\nu_E E$-Adams spectral sequence for the synthetic spectrum $\nu_EX$ is equivalent to a synthetic $\tau$-Bockstein spectral sequence for $\nu_EX$. Bockstein spectral sequences are often easier to work with and prove things about in practice. In addition, this spectral sequence systematically keeps track of all the data of differentials and hidden extensions in the $E$-Adams spectral sequence of $X$. We discuss more details of this in Section~\ref{synspectraoverviewsectino}.
    \item Synthetic spectra can themselves be used as a tool to attack classical Adams spectral sequence calculations, particularly hidden extension calculations (see \cite{Bur20} and Remark~\ref{CtauLESremark} of this paper for examples).
\end{itemize}

In chromatic homotopy theory, the connective topological modular forms spectrum $tmf$ and its variants \cite{DFHH14}
are key spectra for understanding height 2 phenomena. In particular, $tmf$ is a good approximation to $\bS$ in the sense that $tmf$ is an $\bE_{\infty}$-ring spectrum and the ring map $\pi_*(\bS)\to\pi_*(tmf)$ induced by the unit $\bS\to tmf$ detects many elements of $\pi_*(\bS)$.

\bigskip

Because of the close relationship between $\Syn_E$ and the $E$-Adams spectral sequence, we hope to understand more about $\pi_*(\bS_2^{\wedge})$ by understanding the bigraded homotopy ring $\pi_{*,*}(\nu_{\h\bF_2}\bS_2^{\wedge})$ and the ring map $\pi_{*,*}(\nu_{\h\bF_2}\bS_2^{\wedge})\to \pi_{*,*}(\nu_{\h\bF_2}tmf_2^{\wedge})$ induced by the synthetic unit map $\nu_{\h\bF_2}\bS_2^{\wedge}\to\nu_{\h\bF_2}tmf_2^{\wedge}$. Forthcoming work of Burklund, Isaksen, and Xu aims to do part of this program by studying a synthetic Adams spectral sequence computing $\pi_{*,*}(\nu_{\h\bF_2}\bS_2^{\wedge})$ and using it to obtain specific information about $\pi_*(\bS_2^{\wedge})$ when $*\geq 70$.

\bigskip

Let $tmf:=tmf_2^{\wedge}$ and $\nu\h\bF_2:=\nu_{\h\bF_2}\h\bF_2$. In this paper, we study the bigraded synthetic homotopy ring of $\nu_{\h\bF_2}tmf$. Our main tool is the $\nu\h\bF_2$-Adams spectral sequence, which by \cite[Thm. 9.19]{BHS19} is a ``deformed'' version of the classical $\h\bF_2$-Adams spectral sequence. Most of the methods we use to compute this spectral sequence are essentially synthetic versions of the ones employed by \cite{BR21} in their computation of the $\h\bF_2$-Adams spectral sequence of $tmf$.

\bigskip

In Section~\ref{synspectraoverviewsectino}, we overview some basic properties of the category $\Syn_E$ and the $\nu\h\bF_2$-Adams spectral sequence which we will use in our calculation.

\bigskip

The $\h\bF_2$-Adams ${}_{\cl}\E_2^{*,*}$-page for $tmf$ is an $\bF_2$-algebra with 13 generators and 54 relations, originally described by \cite{SI67}. \cite{BR21} produce a description of the ${}_{\cl}\E_r^{*,*}$-pages instead as $R_i:=\bF_2[g,w_1,w_2^{2^i}]$-modules for some particular generators \linebreak $g,w_1,w_2$. In Section~\ref{AdamsSScalcdetailssection}, we review their calculation and calculate the ${}_{\syn}\E_r^{*,*,*}$-pages of the $\nu\h\bF_2$-Adams spectral sequence for $\nu_{\h\bF_2}tmf$. Let $R_i':=\bF_2[\tau,g,w_1,w_2^{2^i}]$ . Our main result in this section is the following:

\begin{theorem}(Section~\ref{AdamsSScalcdetailssection})
The ${}_{\syn}\E_r^{*,*,*}$-pages of the $\nu\h\bF_2$-Adams spectral sequence for $\nu_{\h\bF_2}tmf$ are completely computed. The $R_0'$-module structure of ${}_{\syn}\E_2^{*,*,*}$, the $R_1'$-module structure of ${}_{\syn}\E_3^{*,*,*}$, and the $R_2'$-module structures of ${}_{\syn}\E_4^{*,*,*}$ and ${}_{\syn}\E_5^{*,*,*}={}_{\syn}\E_{\infty}^{*,*,*}$ are recorded in Table~\ref{E2pagetable}, Table~\ref{E3pagetable}, Table~\ref{E4pagetable}, and Table~\ref{Einfpagetable} respectively.    
\end{theorem}

In Section~\ref{multstructsection}, we use the computation of ${}_{\syn}\E_{\infty}^{*,*,*}$ to study multiplicative structure of $\pi_{*,*}\nu_{\h\bF_2}tmf$. We first determine $\pi_{0,*}\nu_{\h\bF_2}tmf$ (Prop.\ref{pi0prop}). The homotopy groups $\pi_{*,*}\nu_{\h\bF_2}tmf$ are naturally a $\pi_{0,*}\nu_{\h\bF_2}tmf$-algebra and we determine the indecomposable algebra generators:

\begin{theorem}(\ref{synEinfalgelements})
As a $\bZ_2^{\wedge}[\tau,\ttwo]/(\tau\ttwo=2)$-algebra, $\pi_{*,*}\nu_{\h\bF_2}tmf$ is generated by 44 indecomposable elements. These elements (with the addition of $\ttwo$) are listed in Table~\ref{Einfalggentable} together with the elements of ${}_{\syn}\E_{\infty}^{*,*,*}$ which detect them. 
\end{theorem}

Let $\pi_{k,w}:=\pi_{k,w}\nu_{\h\bF_2}tmf$. We focus our attention on multiplicative structure involving the generators $\tau\in\pi_{0,-1}$, $\ttwo\in\pi_{0,1}$, $\eta\in\pi_{1,2}$, $\nu\in\pi_{3,4}$, $\kappabar\in\pi_{20,24}$, $B\in\pi_{8,12}$, and $M\in\pi_{192,224}$. We first determine hidden extensions by these elements:

\begin{theorem}(\ref{nohidextthm})
There are no hidden $\tau$-, $B$-, or $M$-extensions in $\pi_{*,*}(\nu_{\h\bF_2}tmf)$.    
\end{theorem}

\begin{theorem}(\ref{mainhidextthm})
All possible hidden $\ttwo$-, $\eta$-, $\nu$-, and $\kappabar$-extensions of $\pi_{*,*}(\nu_{\h\bF_2}tmf)$ are computed and recorded in Table~\ref{ttwoextensiontable}, Table~\ref{etaextensiontable}, Table~\ref{nuextensiontable}, and Table~\ref{kappabarextensiontable}, respectively.
\end{theorem}

With the exception of a few other relations, this turns out to be enough to describe the full multiplicative structure induced by these elements. Let $N_{[0,192)}\subset\pi_{*,*}\nu_{\h\bF_2}tmf$ denote the $\bZ_2^{\wedge}[\tau,\ttwo,\kappabar,B]/(\tau\ttwo=2)$-submodule generated by all classes in topological degrees $0\leq k <192$:

\begin{theorem}(\ref{Mfreethem})
As $\bZ_2^{\wedge}[\tau,\ttwo,\kappabar,B,M]/(\tau\ttwo=2)$-modules, there is an isomorphism 
\begin{equation*}
    N_{[0,192)}\otimes \bZ_2^{\wedge}[M]\xrightarrow{\cong}\pi_{*,*}\nu_{\h\bF_2}tmf.
\end{equation*}
\end{theorem}

\begin{theorem}(\ref{fullmultthm})
The bigraded homotopy groups $\pi_{*,*}(\nu_{\h\bF_2}tmf)$, as a module over $\bZ_2^{\wedge}[\tau,\ttwo,\eta,\nu,\kappabar,B,M]/(\tau\ttwo=2)$, are generated by 59 elements, recorded in Table~\ref{homotopymodgentable}, and subject to the relations described by Figures~\ref{chart1}-\ref{chart8}, Table~\ref{kappabarextensiontable}, Table~\ref{otherrelationstable}, Theorem~\ref{Btautorsionthm}, and Theorem~\ref{Mfreethem}.
\end{theorem}

In Appendix~\ref{appendix} we include Adams charts of ${}_{\syn}\E_{\infty}^{*,*,*}$ and of hidden extensions and relations in $\pi_{*,*}(\nu_{\h\bF_2} tmf)$, tables of the $R_i'$-generators of ${}_{\syn}\E_{r}^{*,*,*}$-pages, tables of the indecomposable algebra generators of ${}_{\syn}\E_{\infty}^{*,*,*}$ and $\pi_{*,*}(\nu_{\h\bF_2} tmf)$, a table of the $\bZ_2^{\wedge}[\tau,\ttwo,\eta,\nu,\kappabar,B,M]/(\tau\ttwo=2)$-module generators of $\pi_{*,*}(\nu_{\h\bF_2} tmf)$, tables of all $\ttwo$-, $\eta$, $\nu$-, and $\kappabar$-hidden extensions, and a table of miscellaneous relations.

\subsection{Acknowledgements} I thank Dan Isaksen for suggesting this calculation and for his comments on my Adams charts and a draft of this paper. Thanks to Dexter Chua, Piotr Pstr\k{a}gowski, and Robert Burklund for helpful conversations about synthetic spectra and to Dexter Chua and Hood Chatham for showing me how to use their sseq Ext resolver and GUI, which was indispensable for this calculation. Thanks to John Rognes and Bob Bruner for helpful conversations and for writing a monolith of a book on the classical Adams spectral sequence for $tmf$. Thanks to Noah Riggenbach for helpful comments on a draft of this paper. Thanks to an anonymous referee, who suggested several useful comments for improving this paper. Lastly, thanks to my advisor Michael Mandell for his support and patience answering any questions I had throughout this project.

\section{Overview of Synthetic Spectra}

\label{synspectraoverviewsectino}

In this section, we summarize known definitions and theorems about synthetic spectra. Most of the material in this section comes from \cite{Pst23} and from Section 9 and Appendix A of \cite{BHS19}.

\begin{theorem} \text{\cite{Pst23}}.
Let $E\in \Sp$ be an Adams-type spectrum (see \cite[Definition 3.13]{Pst23}); e.g. $E=\h\bF_p,\BP,\MU$, etc. There is a presentable symmetric monoidal stable $\infty$-category $\Syn_E$ and a commuting diagram of functors

\begin{equation*}
\begin{tikzcd}[column sep=small]
& \Syn_E \arrow[dr,"\tau^{-1}"] & \\
\Sp \arrow[rr,"id"'] \arrow[ur,"\nu_E"] & & \Sp
\end{tikzcd}  
\end{equation*}

 satisfying the following:
 
 \begin{enumerate}
     \item $\nu_E$ is lax symmetric monoidal and preserves filtered colimits.
     \item $\tau^{-1}$ is symmetric monoidal and preserves colimits. 
    
    
 \end{enumerate}
 
\end{theorem}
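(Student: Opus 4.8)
The plan is to realize $\mathcal{S}yn_E$ as a category of sheaves on a site built from $E$, and to extract every asserted structure from general facts about sheaf categories together with one genuinely homotopy-theoretic input, namely the identification of the $\tau$-inverted category. First I would fix the site. Let $\mathcal{S}p^{fp}_E \subseteq \mathcal{S}p$ denote the full subcategory of finite spectra $P$ with $E_*P$ a finitely generated projective $E_*$-module, equipped with the Grothendieck topology whose covers are generated by the $E_*$-epimorphisms (maps inducing surjections on $E$-homology). Because $E$ is of Adams type, the K\"unneth map is an isomorphism on these objects, so $E_*(P \wedge Q) \cong E_*P \otimes_{E_*} E_*Q$ is again projective; hence $\mathcal{S}p^{fp}_E$ is closed under $\wedge$ and inherits a symmetric monoidal structure. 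I would then define $\mathcal{S}yn_E$ to be the full subcategory of the spectrum-valued presheaf category $\operatorname{Fun}((\mathcal{S}p^{fp}_E)^{op}, \mathcal{S}p)$ spanned by the additive (finite-product-preserving) presheaves satisfying hyperdescent for this topology, equivalently those sending each cofiber sequence in $\mathcal{S}p^{fp}_E$ that is short exact on $E$-homology to a fiber sequence.

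From this description the formal properties asserted in the statement are essentially automatic. Presentability holds because $\mathcal{S}yn_E$ is an accessible localization of the presentable category $\operatorname{Fun}((\mathcal{S}p^{fp}_E)^{op}, \mathcal{S}p)$. Stability holds because the target of these presheaves is stable and sheafification is a left-exact localization, so the sheaf subcategory is stable. For symmetric monoidality I would use that Day convolution makes the presheaf category presentably symmetric monoidal, that the localization to sheaves is compatible with this tensor (the tensor of two sheaves being the sheafification of their Day convolution), and that this therefore descends to a presentably symmetric monoidal structure on $\mathcal{S}yn_E$.

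For the functor $\nu_E$, I would set $\nu_E(X)(P) := \tau_{\geq 0}\operatorname{map}_{\mathcal{S}p}(P, X)$, the connective cover of the mapping spectrum. Additivity holds since $\tau_{\geq 0}$ preserves products, and the Adams-type hypothesis guarantees that these connective covers assemble into a sheaf, so $\nu_E X \in \mathcal{S}yn_E$ with no further sheafification needed. Lax symmetric monoidality of $\nu_E$ follows from the lax symmetric monoidality of $\tau_{\geq 0}$ together with the natural maps $\operatorname{map}(P,X) \wedge \operatorname{map}(Q,Y) \to \operatorname{map}(P \wedge Q, X \wedge Y)$, packaged through Day convolution. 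Preservation of filtered colimits follows because each $P$ is compact and both $\operatorname{map}(P,-)$ and $\tau_{\geq 0}$ commute with filtered colimits, while the sheaf subcategory is itself closed under filtered colimits.

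Finally, for $\tau^{-1}$, there is a canonical class $\tau \in \pi_{0,-1}(\mathbf 1)$, where $\mathbf 1 = \nu_E \bS$, and $\tau$-inversion $\mathcal{S}yn_E \to \mathcal{S}yn_E[\tau^{-1}]$ is a smashing symmetric monoidal localization, hence symmetric monoidal and colimit-preserving. The substantive step, which I expect to be the main obstacle, is proving the equivalence $\mathcal{S}yn_E[\tau^{-1}] \simeq \mathcal{S}p$: I would produce a colimit-preserving symmetric monoidal functor out of $\mathcal{S}yn_E$ via the universal property of sheaves applied to the inclusion $\mathcal{S}p^{fp}_E \hookrightarrow \mathcal{S}p$, show it inverts $\tau$, and check it is an equivalence after inversion by comparing bigraded homotopy. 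Defining $\tau^{-1}$ as the composite of the localization with this equivalence yields the second clause, and a direct computation of $\tau$-inversion on the sheaf $\nu_E X$, whose $\tau$-inverted bigraded homotopy collapses onto $\pi_* X$, gives the commuting triangle $\tau^{-1} \circ \nu_E \simeq \operatorname{id}$.
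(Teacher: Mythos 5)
The paper offers no proof of this theorem---it is imported verbatim from Pstr\k{a}gowski \cite{Pst18}---and your sketch is a faithful reconstruction of exactly that source's construction: synthetic spectra as additive (spherical) sheaves of spectra on the site $\mathcal{S}p^{fp}_E$ with the $E_*$-epimorphism topology, Day convolution descending to sheaves for the monoidal structure, $\nu_E X=\tau_{\geq 0}\operatorname{Map}(-,X)$ requiring no sheafification, and $\tau$-inversion as a smashing symmetric monoidal localization with $\mathcal{S}yn_E[\tau^{-1}]\simeq\mathcal{S}p$ correctly isolated as the substantive step. So this is essentially the same approach as the cited proof; the lone quibble is your attribution of the K\"unneth isomorphism closing $\mathcal{S}p^{fp}_E$ under smash products to the Adams-type hypothesis, when it needs only flatness of the projective modules $E_*P$---Adams type is instead what makes the site well behaved (e.g.\ in verifying that the connective covers $\nu_E X$ already satisfy descent).
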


$\Syn_E$ is the category of $E$-\textit{synthetic spectra} (or just \textit{synthetic spectra} if $E$ is implicit). We will often write $\nu$ instead of $\nu_E$ if $E$ is implicit. The functor $\tau^{-1}$ we refer to as $\tau$-\textit{localization}.

\begin{corollary}
If $X$ is an $\bE_{\infty}$-ring spectrum, then $\nu X$ is a synthetic $\bE_{\infty}$-ring spectrum.
\end{corollary}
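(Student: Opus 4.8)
The plan is to reduce the statement to the standard fact that lax symmetric monoidal functors preserve commutative algebra objects. Recall that an $E_\infty$-ring spectrum is precisely a commutative algebra object in the symmetric monoidal $\infty$-category $\mathcal{S}p$, that is, an object of $\mathrm{CAlg}(\mathcal{S}p)$; likewise a synthetic $E_\infty$-ring spectrum is, by definition, an object of $\mathrm{CAlg}(\mathcal{S}yn_E)$. So it suffices to produce a functor $\mathrm{CAlg}(\mathcal{S}p)\to\mathrm{CAlg}(\mathcal{S}yn_E)$ refining $\nu_E$ and carrying the $E_\infty$-ring $X$ to $\nu_E X$.

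For this I would invoke the following general principle from the theory of symmetric monoidal $\infty$-categories (Lurie, \emph{Higher Algebra}): a lax symmetric monoidal functor $F\colon \mathcal{C}\to\mathcal{D}$, being a map of $\infty$-operads $\mathcal{C}^\otimes\to\mathcal{D}^\otimes$ over $\mathrm{Fin}_*$, induces by post-composition a functor $\mathrm{CAlg}(F)\colon \mathrm{CAlg}(\mathcal{C})\to\mathrm{CAlg}(\mathcal{D})$ on commutative algebra objects, compatible with the forgetful functors to the underlying categories. Concretely, a commutative algebra object of $\mathcal{C}$ is a section $\mathrm{Fin}_*\to\mathcal{C}^\otimes$ of $\infty$-operads, and composing with $F^\otimes$ yields a section $\mathrm{Fin}_*\to\mathcal{D}^\otimes$, i.e. a commutative algebra object of $\mathcal{D}$ whose multiplication and unit are the images under $F$ of those of the source.

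To finish, I would apply this with $F=\nu_E$. Part (1) of the preceding Theorem supplies exactly the needed structure, namely that $\nu_E$ is lax symmetric monoidal. Hence $\mathrm{CAlg}(\nu_E)$ sends the $E_\infty$-ring $X$ to a commutative algebra object of $\mathcal{S}yn_E$ whose underlying synthetic spectrum is $\nu_E X$, which is precisely the assertion that $\nu_E X$ is a synthetic $E_\infty$-ring spectrum.

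The step requiring the most care — though it is entirely formal — is the first one: recognizing that lax, rather than strong, symmetric monoidality already suffices, since preservation of commutative algebra objects needs only the structure maps $F(A)\otimes F(B)\to F(A\otimes B)$ and a unit map, not equivalences. I do not expect any genuine obstacle here; all of the content is packaged in the functoriality of $\mathrm{CAlg}(-)$ under lax symmetric monoidal functors, and the only input specific to synthetic spectra is the lax symmetric monoidality of $\nu_E$ recorded in the Theorem.
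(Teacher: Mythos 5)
Your proof is correct and follows exactly the paper's approach: the paper's one-line proof also deduces the result from the lax symmetric monoidality of $\nu_E$ stated in part (1) of the preceding Theorem, and your argument simply spells out the standard fact (functoriality of $\mathrm{CAlg}(-)$ under lax symmetric monoidal functors) that the paper leaves implicit.
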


\begin{proof}
This follows directly from the fact that $\nu$ is lax symmetric monoidal.
\end{proof}

In $\Syn_E$, there are bigraded spheres which give rise to bigraded suspensions and bigraded homotopy groups:

\begin{definition} \textit{\cite[Section 4.1]{Pst23}}.
The bigraded sphere spectrum $\bS^{t,w}$ is defined to be $\Sigma^{t-w}\nu\bS^w$ and the monoidal unit is $\bS^{0,0}$. For synthetic spectra $X,Y$, bigraded suspension $\Sigma^{t,w}X$ is defined to be $\bS^{t,w}\otimes X$ and the bigraded homotopy classes of maps $[Y,X]_{t,w}$ is defined to be $\pi_0\mathrm{map}(\Sigma^{t,w}Y,X)$. In particular, we write $\pi_{t,w}(X)$ for $[\bS^{0,0},X]_{t,w}=[\bS^{t,w},X]$.
\end{definition}

\label{Def.2.4}

A key feature of synthetic spectra is the existence of a canonical element $\tau\in\pi_{0,-1}(\bS^{0,0})$ and a synthetic $\bE_{\infty}$-ring spectrum $C\tau$ with special properties related to $E_*E$-comodules: 

\begin{theorem} \text{\cite[Section 4.5]{Pst23}}.
\label{tauthm}
The pushout comparison map 
$$\bS^{0,-1}=\Sigma(\nu\bS^{-1})\to \nu(\Sigma\bS^{-1})=\bS^{0,0}$$
defines a canonical element $\tau\in\pi_{0,-1}(\bS^{0,0})$. The cofiber of $\tau$, $C\tau$, is a synthetic $\bE_{\infty}$-ring spectrum and for the synthetic spectra $\nu X,\nu Y\in \Syn_E$, there is a cofiber sequence
    \begin{equation*}
    \begin{tikzcd}
    \Sigma^{0,-1}\nu X \ar[r,"\tau"] & \nu X \ar[r,"i"] & \nu X\otimes C\tau \ar[r,"p"] & \Sigma^{1,-1}\nu X
    \end{tikzcd}
    \end{equation*}
    and a natural isomorphism $$[\nu Y,\nu X\otimes C\tau]_{t,w}\cong \Ext^{w-t,t}_{E_*E}(E_*Y,E_*X)$$ of bigraded abelian groups. In particular, $$\pi_{t,w}(\nu X\otimes C\tau)\cong \Ext^{w-t,t}_{E_*E}(E_*,E_*X)$$ is a regrading of the $\E_2$-page for the $E$-based Adams spectral sequence computing $\pi_*(X^{\wedge}_E)$, where $X^{\wedge}_E$ is the $E$-nilpotent completion of $X$.
\end{theorem}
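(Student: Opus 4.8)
The plan is to dispatch the first two assertions formally and then concentrate on the $\Ext$ identification, which is the real content. For $\tau$: since $\nu$ preserves the zero object, applying it to the pushout square exhibiting $\Sigma(-)$ as $\operatorname{cofib}((-)\to 0)$ produces a canonical comparison transformation $\Sigma\nu(-)\to\nu\Sigma(-)$, namely the universal map $\operatorname{colim}(\nu\circ D)\to\nu(\operatorname{colim}D)$ for the relevant diagram $D$. First I would evaluate this at $\bS^{-1}$ and use Definition \ref{Def.2.4} to identify $\Sigma\nu\bS^{-1}=\bS^{0,-1}$ and $\nu\Sigma\bS^{-1}=\nu\bS^{0}=\bS^{0,0}$, so that the transformation is an element $\tau\in[\bS^{0,-1},\bS^{0,0}]=\pi_{0,-1}(\bS^{0,0})$. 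The cofiber sequence for $\nu X$ would then follow by applying the exact functor $\nu X\otimes(-)$ to the defining cofiber sequence $\bS^{0,-1}\xrightarrow{\tau}\bS^{0,0}\to C\tau\to\bS^{1,-1}$ of $C\tau$: because $\otimes$ preserves cofiber sequences in a stable symmetric monoidal $\infty$-category and $\bS^{a,b}\otimes\nu X=\Sigma^{a,b}\nu X$, the displayed sequence drops out with first map multiplication by $\tau$. For the $E_\infty$-structure I would invoke the deformation formalism of \cite{BHS19}: $\mathcal{S}yn_E$ is a deformation with $\tau$ as parameter, generic fiber the $\tau$-localization $\tau^{-1}=\operatorname{id}_{\mathcal{S}p}$ and special fiber $(-)\otimes C\tau$, and the parameter is set up precisely so that its cofiber $C\tau$ is canonically an $E_\infty$-algebra.

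The substance is the $\Ext$ isomorphism, and my plan is to pass to $C\tau$-modules. First I would recall the model of $\mathcal{S}yn_E$ from \cite{Pst18} as spectral sheaves on the site $\mathcal{S}p^{\text{fp}}_E$ of finite spectra $P$ with $E_*P$ projective, under the topology of $E_*$-epimorphisms, with $\nu X$ the sheafification of $P\mapsto\operatorname{map}(P,X)$. The key structural input I would then establish is a symmetric monoidal equivalence $\operatorname{Mod}_{C\tau}(\mathcal{S}yn_E)\simeq\operatorname{Stable}_{E_*E}$ with the stable $\infty$-category of $E_*E$-comodules, under which the unit $C\tau$ corresponds to $E_*$ and, more generally, $\nu X\otimes C\tau$ corresponds to the comodule $E_*X$ in homological degree zero. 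Granting this, the bigraded mapping groups are computed by the free--forgetful adjunction $(-\otimes C\tau)\dashv U$:
\begin{equation*}
[\nu Y,\nu X\otimes C\tau]_{t,w}\cong[\Sigma^{t,w}(\nu Y\otimes C\tau),\nu X\otimes C\tau]_{\operatorname{Mod}_{C\tau}}\cong\Ext_{E_*E}(E_*Y,E_*X),
\end{equation*}
where the last isomorphism uses the comodule equivalence together with the fact that homotopy of mapping objects in $\operatorname{Stable}_{E_*E}$ is derived comodule $\operatorname{Hom}$. Tracking how the homological shift $\Sigma^{t-w}$ and the internal weight from $\nu\bS^{w}$ translate into the cohomological and internal degrees of $\Ext$ is exactly the regrading $(t,w)\mapsto(w-t,t)$, which is routine bookkeeping. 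Specializing $Y=\bS^{0}$ then gives $\pi_{t,w}(\nu X\otimes C\tau)\cong\Ext^{w-t,t}_{E_*E}(E_*,E_*X)$, which is the regraded $E_2$-page of the $E$-based Adams spectral sequence; its convergence to $\pi_*(X^{\wedge}_E)$ is the classical convergence statement for that spectral sequence.

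The main obstacle will be the comodule equivalence $\operatorname{Mod}_{C\tau}(\mathcal{S}yn_E)\simeq\operatorname{Stable}_{E_*E}$ and the accompanying identification $\nu X\otimes C\tau\leftrightarrow E_*X$. This is where the explicit sheaf model becomes unavoidable: one must compute the bigraded homotopy of $C\tau$-modules and recognize its filtration data as that of an $E_*E$-comodule, which amounts to a Hurewicz-type comparison between $\nu X\otimes C\tau$ and the cobar complex computing $\Ext_{E_*E}(E_*,E_*X)$. Everything else in the argument is either formal manipulation of the symmetric monoidal stable structure or bookkeeping with the bigrading.
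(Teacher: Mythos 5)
First, a point of comparison: the paper does not prove this statement at all --- it is quoted from \cite[Section 4.5]{Pst18}, with Theorem 2.6 of the paper recording the accompanying comodule comparison. So your proposal is really a reconstruction of Pstr\k{a}gowski's argument, and its overall architecture is the correct one: define $\tau$ from the colimit-comparison map $\Sigma\nu(-)\to\nu\Sigma(-)$ evaluated at $\bS^{-1}$, obtain the cofiber sequence by tensoring the defining triangle of $C\tau$ with $\nu X$, and compute $[\nu Y,\nu X\otimes C\tau]_{t,w}$ by passing to $C\tau$-modules via the free--forgetful adjunction and comparing with Hovey's $\mathcal{S}\textit{table}_{E_*E}$.

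However, two of your steps have genuine gaps. (1) The $E_\infty$-structure on $C\tau$ is not formal, and your justification for it is circular: the deformation formalism of \cite{BHS19}, whose special fiber is $\mathcal{M}\textit{od}_{C\tau}(\mathcal{S}yn_E)$, presupposes an algebra structure on $C\tau$; it cannot produce one. Nor is there any general principle making the cofiber of a map into the unit a ring --- $\mathrm{cofib}(2\colon\bS\to\bS)$ admits no unital multiplication whatsoever, even though $2$ generates a perfectly good one-parameter deformation (the $2$-adic filtration). The actual argument identifies $C\tau$ with the image of the monoidal unit under a lax symmetric monoidal functor arising from the comparison with sheaves on comodules (so the ring structure is inherited rather than conjured), or else proceeds by obstruction theory exploiting vanishing lines; either way it is a real theorem, not bookkeeping. (2) Your ``key structural input,'' a symmetric monoidal equivalence $\mathcal{M}\textit{od}_{C\tau}(\mathcal{S}yn_E)\simeq\mathcal{S}\textit{table}_{E_*E}$, is false in the generality you need: by Theorem 2.6 of the paper, $\chi_*$ is in general only a fully faithful embedding, and it is an equivalence precisely when $E$ is Landweber exact --- which fails for $E=H\bF_2$, the case this paper actually uses. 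Your computation of $[\nu Y,\nu X\otimes C\tau]_{t,w}$ survives this correction, because full faithfulness together with the identification $\chi_*(\nu X\otimes C\tau)\cong E_*X$ (applied also to the bigraded suspensions of $\nu Y\otimes C\tau$) is all the adjunction argument actually consumes; but the statement you propose to establish should be weakened to the embedding, since proving the equivalence is impossible.
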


In fact, Pstr\k{a}gowski proves a stronger result concerning the category $\Mod_{C\tau}(\Syn_E)$ of $C\tau$-modules and Hovey's \cite{Hov04} stable category of $E_*E$-comodules $\Stable_{E_*E}$:

\begin{theorem}\text{\cite[Section 4.5]{Pst23}}
There is a symmetric monoidal embedding $\chi_*:\Mod_{C\tau}(\Syn_E)\hookrightarrow\Stable_{E_*E}$ with the property that $\chi_*(C\tau\otimes \nu X)=E_*X$. The functor $\chi_*$ is an equivalence if $E$ is Landweber exact or if $E=\h\bF_p$.
\end{theorem}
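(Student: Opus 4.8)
The plan is to realize both categories as avatars of the derived $\infty$-category of $E_*E$-comodules and to match them through their mapping spectra. First I would recall Pstr\k{a}gowski's presentation of $\mathcal{S}yn_E$ as the $\infty$-category of hypercomplete spherical (product-preserving) sheaves of spectra on the site $\mathcal{S}p^{\mathrm{fp}}_E$ of finite spectra with projective $E$-homology, equipped with the topology generated by $E_*$-surjections; under this presentation $\nu P$ is the sheafification of $Q\mapsto \mathrm{map}(Q,P)$ and the bigrading records the sheaf filtration. The element $\tau$ is the comparison map $\bS^{0,-1}\to\bS^{0,0}$ of \hyperref[Thm.2.5]{Theorem 2.5} and $C\tau$ is its cofiber; since $C\tau$ is an $E_\infty$-algebra, $\mathcal{M}od_{C\tau}(\mathcal{S}yn_E)$ is automatically a presentable symmetric monoidal stable $\infty$-category, so it will suffice to build $\chi_*$ compatibly with tensor products on a set of generators.

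Next I would construct $\chi_*$ on the compact generators. The objects $C\tau\otimes\nu P$, for $P\in\mathcal{S}p^{\mathrm{fp}}_E$, generate $\mathcal{M}od_{C\tau}(\mathcal{S}yn_E)$ under colimits and bigraded shifts, and by \hyperref[Thm.2.5]{Theorem 2.5} the bigraded homotopy of their mapping spectra is
\[
[\nu Q,\ \nu P\otimes C\tau]_{t,w}\;\cong\;\Ext^{w-t,t}_{E_*E}(E_*Q,E_*P).
\]
On the other side, $\mathcal{S}table_{E_*E}$ is generated under colimits by the dualizable comodules $E_*P$, whose graded mapping groups are exactly these same $\Ext$-groups. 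I would therefore define $\chi_*$ on generators by $C\tau\otimes\nu P\mapsto E_*P$ and verify that the induced map of mapping spectra realizes the isomorphism above; the multiplicativity $E_*(P\otimes Q)\cong E_*P\otimes_{E_*}E_*Q$ on finite projective spectra upgrades this to a symmetric monoidal functor on the subcategory of ``representable'' $C\tau$-modules, and filtered-colimit continuity of $\nu$, $E_*$, and $\chi_*$ extends the identity $\chi_*(C\tau\otimes\nu X)=E_*X$ from finite $P$ to general $X$.

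I would then extend $\chi_*$ to all of $\mathcal{M}od_{C\tau}(\mathcal{S}yn_E)$ by the universal property of presentable $\infty$-categories: a colimit-preserving symmetric monoidal functor out of a compactly generated symmetric monoidal category is determined by its restriction to the generators. Full faithfulness follows because both sides are compactly generated by objects on which the mapping spectra have been identified, so the comparison is an equivalence onto the localizing subcategory of $\mathcal{S}table_{E_*E}$ generated by the dualizable comodules $E_*P$. This already produces the symmetric monoidal embedding asserted in general. For essential surjectivity I would invoke Landweber exactness: in that case the Hopf algebroid $(E_*,E_*E)$ is flat enough that every comodule is a filtered colimit of dualizable comodules of the form $E_*P$, whence the localizing subcategory they generate is all of $\mathcal{S}table_{E_*E}$ and $\chi_*$ is an equivalence.

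The main obstacle I expect is exactly this final identification. Hovey's $\mathcal{S}table_{E_*E}$ is constructed as a stabilization of a model structure on (unbounded) complexes of comodules, so the delicate point is pinning down its compact generators precisely and proving that the dualizable comodules $E_*P$ generate it \emph{only} under the Landweber exactness hypothesis; this is where the flatness conditions genuinely enter and is precisely the step that separates the general embedding from the equivalence. A secondary technical point is confirming that the generation of $\mathcal{M}od_{C\tau}(\mathcal{S}yn_E)$ by the $C\tau\otimes\nu P$ is compatible with the sheaf-theoretic hypercompleteness built into $\mathcal{S}yn_E$, which I would handle using the Adams-type assumption on $E$ (so that $E_*E$ is itself a filtered colimit of dualizables).
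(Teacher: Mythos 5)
The paper does not prove this statement at all: Theorem 2.6 is imported verbatim from \cite[Section 4.5]{Pst18} as background in the overview section, so the only meaningful comparison is with Pstr\k{a}gowski's own argument. Your outline has the same overall shape as that argument---compact generation of $\mathcal{M}od_{C\tau}(\mathcal{S}yn_E)$ by the objects $C\tau\otimes\nu P$, identification of their mapping spectra with comodule $\Ext$ groups, extension to the whole category by presentability, and reduction of the Landweber-exact case to a generation statement in $\mathcal{S}table_{E_*E}$---but it contains a genuine gap at the central step.

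The gap is the construction of $\chi_*$ itself. You cannot produce an $\infty$-functor by prescribing it on objects ($C\tau\otimes\nu P\mapsto E_*P$) and then ``verifying that the induced map of mapping spectra realizes the isomorphism'': until the functor exists there is no induced map, and a family of isomorphisms on (bigraded homotopy groups of) mapping spectra does not assemble into the coherence data that an $\infty$-functor, let alone a symmetric monoidal one, requires. The universal property you invoke---that a colimit-preserving functor out of a compactly generated category is determined by its restriction to the compact generators---presupposes a coherent functor on the full spectral subcategory spanned by those generators, which is exactly what has to be built; lifting an equivalence of homotopy categories (both enriched in $\Ext$ groups) to an equivalence of spectral categories is in general obstructed. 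This is where the site-theoretic presentations do real work in \cite{Pst18}: Hovey's $\mathcal{S}table_{E_*E}$ is itself exhibited there as hypercomplete sheaves of spectra on the site of dualizable comodules, so that the homology functor $E_*:\mathcal{S}p^{\mathrm{fp}}_E\to \mathrm{Comod}^{\mathrm{dual}}_{E_*E}$ becomes a morphism of sites and induces the comparison adjunction with all coherences for free; the $\Ext$ computations then serve only to identify what this already-existing functor does on generators. You recall the sheaf presentation of $\mathcal{S}yn_E$ in your first paragraph but never use the corresponding presentation of the target, and that is precisely the missing ingredient.

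A secondary imprecision concerns the Landweber-exact case. The needed input is not that flatness makes ``every comodule a filtered colimit of dualizable comodules of the form $E_*P$'': the Adams condition already makes every comodule a filtered colimit of dualizables over any Adams Hopf algebroid, and $\mathcal{S}table_{E_*E}$ is generated by \emph{all} dualizable comodules, not only those of the form $E_*P$ (this is exactly why one gets only an embedding in general). What Landweber exactness buys, via Hovey--Strickland and the Landweber filtration theorem, is that every dualizable $E_*E$-comodule is a retract of some $E_*P$ with $P$ finite; combined with the Adams condition this shows that the $E_*P$ generate all of $\mathcal{S}table_{E_*E}$, which is the essential-surjectivity statement. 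You correctly identify this as the crux, but as written the two inputs are conflated.
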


\label{Thm. 2.6}

\begin{remark}
Applying $\pi_{*,*}(-)$ to the cofiber sequence in Theorem 2.5 gives rise to an exact couple

\[
\begin{tikzcd}[column sep=small]
\pi_{*,*}(\nu X) \ar[rr,"\tau"] & & \pi_{*,*}(\nu X) \ar[dl,"i"] \\
& \Ext^{*,*}_{E_*E}(E_*,E_*X) \ar[ul,"p"] &
\end{tikzcd}
\] and hence a $\tau$-Bockstein spectral sequence with $\E_1=\Ext^{*,*}_{E_*E}(E_*,E_*X)[\tau]$ computing $\pi_{*,*}(\nu X_{\tau}^{\wedge})$.
\end{remark}

A $\nu E$-based Adams spectral sequence construction can also be performed in $\Syn_E$ which attempts to compute $\pi_{*,*}(\nu X^{\wedge}_{\nu E})$, the bigraded homotopy groups of the $\nu E$-nilpotent completion of $\nu X$. In \cite[Appendix A]{BHS19}, they construct this spectral sequence, prove convergence properties, and relate it to the $E$-based classical Adams spectral sequences and the $\tau$-Bockstein spectral sequence. We summarize their results as follows:

\begin{theorem} \text{\cite[Appendix A]{BHS19}}.
\label{mainsynthm}
Let $X\in \Sp$ be a spectrum. Then the following statements hold:

\begin{enumerate}     
    \item There is a trigraded $\nu E$-based Adams spectral sequence computing the bigraded homotopy groups $\pi_{*,*}(\nu X^{\wedge}_{\nu E})$ such that the following are true:
    \begin{enumerate}
    \item ${}_{\syn}\E_2^{s,t,w}\cong {}_{\cl}\E_2^{s,t}\otimes \bZ [\tau]$ with differentials $$d_r:{}_{\syn}\E_r^{s,t,w}\to {}_{\syn}\E_r^{s+r,t+r-1,w}$$ where ${}_{\cl}\E_2^{s,t}$ is the classical Adams $\E_2$-page considered in tridegree $(s,t,t)$ and $\tau$ is in tridegree $(0,0,-1)$.
    \item Given a classical differential with $d_{r,\cl}(x)=y$, there is a synthetic differential $d_r(x)=\tau^{r-1}y$. Moreover, every synthetic differential arises this way.
 \end{enumerate}
   \item The following are equivalent:
   \begin{enumerate}
   \item $X$ is $E$-nilpotent complete.
   \item $\nu X$ is $\nu E$-nilpotent complete.
   \item $\nu X$ is $\tau$-complete.
   \end{enumerate} 
   \item Let $X$ be $E$-nilpotent complete. Then the following are equivalent:
   \begin{enumerate}
   \item The $E$-based Adams spectral sequence for $X$ converges strongly.
   \item The $\nu E$-based Adams spectral sequence for $\nu X$ converges strongly.
   \item The $\tau$-Bockstein spectral sequence for $\nu X$ converges strongly.
   \end{enumerate}        
   \item Up to reindexing, the $\nu E$-based Adams spectral sequence for $\nu X$ and the $\tau$-Bockstein spectral sequence for $\nu X$ are isomorphic.
\end{enumerate}
\end{theorem}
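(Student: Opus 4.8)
The plan is to derive all four statements from the interaction of two structural functors, the $\tau$-localization $\tau^{-1}$ and the $C\tau$-base change $(-)\otimes C\tau$, applied to the canonical $\nu E$-based Adams tower of $\nu X$. First I would construct this tower exactly as in the classical case, resolving $\nu X$ against the unit $\bS^{0,0}\to\nu E$, so that the associated graded layers are built from $\nu E$ and the fiber $\nu\bar{E}$ of its unit map; the resulting filtration spectral sequence is by definition the $\nu E$-based Adams spectral sequence. Since it is a module over $\pi_{*,*}(\bS^{0,0})$ and $\tau$ is a permanent cycle, every differential is $\tau$-linear, a fact I will use repeatedly.

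For the $E_2$-page in (1a) I would smash the whole tower with $C\tau$. By the cofiber sequence and the $\Ext$-isomorphism of Theorem 2.5, together with the symmetric monoidal embedding $\chi_*$ of Theorem 2.6 satisfying $\chi_*(C\tau\otimes\nu X)=E_*X$, the $C\tau$-reduced tower is identified with the classical cobar resolution of $E_*X$ in $E_*E$-comodules, whose spectral sequence is the classical $E$-Adams spectral sequence with $E_2$-page ${}_{cl}E_2^{s,t}=\Ext_{E_*E}^{s,t}(E_*,E_*X)$. The remaining point is the key lemma that the synthetic Adams layers have $\tau$-torsion-free homotopy, so that the $\nu E$-Adams $E_1$-page is the $C\tau$-reduced one extended freely over $\bZ[\tau]$; because $d_1$ is $\tau$-linear and $\bZ[\tau]$ is flat, passing to $d_1$-cohomology yields $E_2^{s,t,w}\cong{}_{cl}E_2^{s,t}\otimes\bZ[\tau]$ with $\tau$ in tridegree $(0,0,-1)$.

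For the differentials in (1b) and the comparison in (4) I would argue at the level of filtrations. Inverting $\tau$ is exact and symmetric monoidal with $\tau^{-1}\nu X\simeq X$ and $\tau^{-1}\nu E\simeq E$, so it sends the $\nu E$-Adams tower of $\nu X$ to the classical $E$-Adams tower of $X$; hence every synthetic differential localizes to a classical one. Combined with the $\tau$-freeness of $E_2$ and $\tau$-linearity, this forces each synthetic $d_r$ to be a fixed $\tau$-power times the classical $d_r$, and a weight bookkeeping pins this power to the value $\tau^{r-1}$ recorded in the statement while ruling out exotic differentials. The cleanest way to package this, and the heart of (4), is to show that the $\nu E$-Adams filtration on $\pi_{*,*}(\nu X)$ coincides, up to the standard reindexing, with the $\tau$-adic filtration produced by the exact couple of Remark 2.7; since the Bockstein $E_1$-page is already $\Ext_{E_*E}(E_*,E_*X)$, matching the two filtrations simultaneously recovers (1) and (4).

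Finally, for the completeness statements in (2) I would show that $\nu E$-nilpotent completion and $\tau$-completion define the same limit on $\nu X$: the two towers share the associated graded identified above with the classical Adams data and agree after inverting $\tau$, so their limits coincide, giving the equivalence of (b) and (c); the equivalence with (a) then follows by comparing the tower $\{\nu X/\tau^n\}$ with the classical $E$-Adams tower of $X$ under $\tau^{-1}$ and $C\tau$-reduction, using that $\pi_{*,*}(\nu X\otimes C\tau)$ is the classical Adams $E_2$-page. For (3), strong convergence is a condition on the $E_\infty$-page together with vanishing of the relevant derived limits, and since (1) and (4) present the three spectral sequences as isomorphic after reindexing, with matched $E_\infty$-pages and filtration towers, strong convergence of any one transfers to the others. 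The main obstacle I expect is the exact filtration matching underlying (1b) and (4): controlling how $\tau$-divisibility interacts with jumps in Adams filtration, so as to prove that the $\nu E$-Adams filtration is literally the $\tau$-Bockstein filtration and thereby justify the precise exponent $\tau^{r-1}$ rather than merely some unspecified $\tau$-power.
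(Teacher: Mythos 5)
A preliminary remark: the paper does not prove this statement. Theorem 2.8 is an attributed summary of results from \cite[Appendix A]{BHS19} (``We summarize their results as follows''), so there is no internal argument to compare against; the relevant comparison is with the proof in that reference. Your outline does track its broad architecture---build the $\nu E$-Adams tower, identify its $C\tau$-reduction with classical Adams data via Theorems 2.5 and 2.6, recover the classical spectral sequence by inverting $\tau$, and reduce (1b), (3), (4) to matching the $\nu E$-Adams filtration with the $\tau$-Bockstein filtration. But at the decisive points the proposal contains genuine gaps rather than proofs.

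The central gap is the one you yourself flag as ``the main obstacle'': ruling out exotic differentials. $\tau$-linearity, $\tau$-freeness of $E_2$, and comparison under $\tau^{-1}$ do determine $d_2$, but from $E_3$ onward the pages contain $\tau$-torsion created by earlier differentials, and any differential whose source or target is $\tau$-torsion is annihilated by $\tau$-localization, hence completely unconstrained by your argument; weight bookkeeping cannot exclude such differentials either, since torsion classes occupy the relevant tridegrees. (Bookkeeping is also convention-sensitive: with the degrees exactly as stated in (1a)---classical classes in weight $t$ and $d_r$ raising weight by one---it would output $\tau^{r-2}$ rather than $\tau^{r-1}$; the stated exponent corresponds to weight-preserving differentials, so bookkeeping alone cannot adjudicate the formula.) Excluding exotic differentials \emph{is} most of the content of (1b), (3), and (4), and in \cite{BHS19} it is obtained from a structural identification: essentially that $\nu X$ is the filtered object associated to the classical Adams tower, so that $\pi_{t,w}(\nu X)$ is expressed through classical Adams covers and the entire exact couple, not merely its $\tau$-localization, is determined by classical data. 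You name this identification but give no mechanism for proving it, so the heart of the theorem is missing. Two further gaps: for (2), ``the two towers share associated graded pieces and agree after inverting $\tau$, so their limits coincide'' is not a valid inference---one needs an actual comparison map, e.g.\ an interleaving of the $\tau$-adic tower with the $\nu E$-Adams tower, or the equivalence $(\nu X)^{\wedge}_{\tau}\simeq \nu(X^{\wedge}_{E})$, from which (2) follows by full faithfulness of $\nu$; note also that $\tau^{-1}(\nu X/\tau^n)=0$, so the comparison ``under $\tau^{-1}$'' you propose for (a)$\Leftrightarrow$(c) degenerates. Finally, your key lemma that the Adams layers have $\tau$-torsion-free homotopy is asserted, not proved; it requires identifying the synthetic Adams resolution of $\nu X$ with $\nu$ applied to a classical $E$-Adams resolution of $X$ (this is where the Adams-type hypothesis and the partial monoidality of $\nu$ enter), together with the computation of $\pi_{*,*}\nu$ on $E$-injective spectra.
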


\begin{remark}
In \cite{BHS19}, ${}_{\cl}\E_2^{s,t}$ lives in tridegree $(s,t,s)$ and differentials instead have the form $$d_r:{}_{\syn}\E_r^{s,t,w}\to {}_{\syn}\E_r^{s+r,t+r-1,w+1}.$$ We have chosen slightly different indexing so that differentials preserve weight. 
\end{remark}

Theorem~\ref{mainsynthm} implies that for $E=\h\bF_p$ we have an isomorphism of $\bF_p[\tau]$-modules
\begin{equation*}
{}_{\syn}\E_r^{*,*,*}\cong {}_{\cl}\E_r^{*,*}\otimes\bF_p[\tau] \oplus \left(\bigoplus_{k=2}^{r-1}{}_{\cl}\mathrm{B}_k^{*,*}\otimes\bF_p[\tau]/(\tau^{k-1})\right),    
\end{equation*}
for $2\leq r\leq\infty$ where ${}_{\cl}\mathrm{B}_k^{*,*}\subset{}_{\cl}\E_k^{*,*}$ denotes the $k$-boundaries of the ${}_{\cl}\E_k$-page. The $\bF_p[\tau]/(\tau^{k-1})$-summands are how differentials in the Adams spectral sequence are ``remembered'' synthetically. For $p=2$ and $X=tmf_2^{\wedge}$, the $\bF_2[\tau]$-module structure of the $\E_r$-page is too large to work with so, instead, we will work over a larger module, which makes the $\E_r$-page easier to describe. We discuss this more in Section~\ref{AdamsSScalcdetailssection}.

\numberwithin{theorem}{subsection}

\section{Synthetic Adams Spectral Sequence}

\label{AdamsSScalcdetailssection}

In this section, we cover details of the calculation of the $\E_r$-page of the synthetic Adams spectral sequence computing $\pi_{*,*}\nu_{\h\bF_2}tmf$. We imitate the process employed by \cite{BR21}. We let $\cA$ denote the classical Steenrod algebra at $p=2$ and $\cA (n)$ denote the subalgebra of $\cA$ generated by $Sq^1,\ldots,Sq^{2^{n}}$. Throughout this section, we will write ${}_{\mathrm{cl}}\E_r^{*,*}$ for the classical Adams $\E_r$-page of $tmf$, ${}_{\syn}\E_r^{*,*,*}$ for the synthetic Adams $\E_r$-page of $\nu_{\h\bF_2}tmf$, and $tmf$ for $tmf_2^{\wedge}$.

\subsection{\texorpdfstring{$\E_2$-page and $R_i'$-modules}{E2-page and Ri'-modules}}

Classically, the (cohomological) $\E_2$-page of the Adams spectral sequence computing $\pi_*(tmf)$ is 
\[
{}_{\mathrm{cl}}\E_2^{s,t}=\Ext^{s,t}_\cA(H^*(tmf),\bF_2)\cong \Ext^{s,t}_{\cA}(\cA \otimes_{\cA (2)} \bF_2,\bF_2) \cong \Ext^{s,t}_{\cA (2)}(\bF_2,\bF_2)
\]
via a change-of-rings isomorphism and the fact that $\cA$ is free as an $\cA (2)$-module. Here we let $s$ denote homological degree and $t$ denote total degree.

\bigskip

Because $\cA (2)\subset \cA$ is a finite algebra, we can entirely calculate the $\E_2$-page. This was done originally by Shimada-Iwai \cite{SI67}, where they calculate $\Ext^{*,*}_{\cA (2)}(\bF_2,\bF_2)$ as a finitely presented bigraded commutative $\bF_2$-algebra with 13 generators and 54 relations. Following the notation and lexicographic ordering of \cite{BR21}, we write the generators and their Ext bidegree in Table~\ref{E2gentable} (see \cite[Table 3.4]{BR21} for all 54 relations). We let $k=t-s$ denote topological degree or stem.

\numberwithin{table}{section}

\label{E2gentable}
\begin{table}[ht]
\begin{tabular}{cccccccccccccc}
\hline
$k$           & 0     & 1     & 3     & 8     & 12     & 15       & 14    & 17      & 25    & 32  & 20       & 8       & 48    \\
$s$           & 1     & 1     & 1     & 3     & 3     & 3        & 4     & 4       & 5     & 7   & 4        & 4        & 8     \\ \hline
$x\in {}_{\mathrm{cl}}\E_2^{s,k+s}$ & $h_0$ & $h_1$ & $h_2$ & $c_0$ & $\alpha$ & $\beta$ & $d_0$ & $e_0$ & $\gamma$ & $\delta$ & $g$ & $w_1$ & $w_2$ \\ \hline
\end{tabular}
\caption{Algebra generators of $\E_2$-page}
\end{table}

\numberwithin{table}{subsection}

As in every spectral sequence calculation, the next step is to compute differentials. Because $tmf$ is an $\bE_{\infty}$-ring spectrum, this will be a spectral sequence of $\bF_2$-algebras. In particular, the $\E_r$-page for $r\geq 2$ is an $\bF_2$-algebra and differentials $d_r$ satisfy the Leibniz rule. Non-zero differentials together with the presence of these 54 relations in the $\E_2$-page will produce several new algebra generators and make the bookkeeping of this particular calculation complicated to keep track of. To see proofs of differentials in the classical Adams spectral sequence for $tmf$, we refer the reader to \cite[Section 5]{BR21}.

\bigskip

To make the calculation more manageable, \cite{BR21} instead consider ${}_{\mathrm{cl}}\E_r^{*,*}$ as an $R_i:=\bF_2[g,w_1,w_2^{2^i}]$-module, with $0\leq i\leq 2$. In particular, they consider ${}_{\mathrm{cl}}\E_2^{*,*}$ as an $R_0$-module, ${}_{\mathrm{cl}}\E_3^{*,*}$ as an $R_1$-module, and ${}_{\mathrm{cl}}\E_r^{*,*}$ as $R_2$-modules for $r\geq 4$. This is sensible as $g$, $w_1$, and $w_2^4$ will turn out to be permanent cycles and $d_2(w_2)=\alpha\beta g$ and $d_3(w_2^2)=\beta g^4$ are non-zero.

\bigskip

The following results about ${}_{\mathrm{cl}}\E_2^{*,*}$ further justify this choice of using $R_i$-modules:

\begin{proposition}[\cite{SI67}]
\label{E2pagefree}
$\Ext^{*,*}_{\cA (2)}(\bF_2,\bF_2)$ is free as a module over $\bF_2[w_1,w_2]$.    
\end{proposition}

\begin{proposition}[Prop. 3.45, \cite{BR21}]
\label{E2R0mod}
$\Ext^{*,*}_{\cA (2)}(\bF_2,\bF_2)$ is a direct sum of cyclic $R_0$-modules. In particular, we have an isomorphism
\begin{equation*}
\Ext^{*,*}_{\cA (2)}(\bF_2,\bF_2)\cong \bigoplus_{x}\dfrac{\bF_2[g,w_1,w_2]}{\mathrm{Ann}(x)}\{x\}, 
\end{equation*}
where $x$ ranges over all $R_0$-module generators of $\Ext^{*,*}_{\cA (2)}(\bF_2,\bF_2)$. Here $\mathrm{Ann}(x)$ denotes the annihilator ideal of $x$.
\end{proposition}

As \cite{BR21} point out, $\Ext^{*,*}_{\cA (2)}(\bF_2,\bF_2)$ is infinitely generated as an $R_0$-module, but only because of the presence of $h_0$-towers which are manageable. To see a complete list of the cyclic $R_0$-modules generators of $\Ext^{*,*}_{\cA (2)}(\bF_2,\bF_2)$, we refer the reader to \cite[Table 3.6]{BR21}.

\bigskip

Now we return back to the $\h\bF_2$-synthetic setting. As summarized in Theorem~\ref{mainsynthm}, ${}_{\syn}\E_2^{*,*,*}\cong {}_{\mathrm{cl}}\E_2^{*,*}\otimes\bZ[\tau]$ and $d_r^{\syn}(x)=\tau^{r-1}d_r^{\mathrm{cl}}(x)$ for an element $x\in{}_{\syn}\E_r^{*,*,*}$. The synthetic $\E_2$-page is free over $\bZ[\tau]$ and $d_r^{\syn}(\tau)=0$ for all $r\geq 2$, so we define $R_i':=\bF_2[\tau,g,w_1,w_2^{2^i}]=R_i\otimes\bZ[\tau]$. We consider the synthetic $\E_2$-page as an $R_0'$-module, the synthetic $\E_3$-page as an $R_1'$-module, and the synthetic $\E_r$-page as an $R_2'$-module for $r\geq 4$.

\bigskip

As an immediate consequence, we get the synthetic analogs of Proposition~\ref{E2pagefree} and Proposition~\ref{E2R0mod}:

\begin{proposition}
\label{synE2pagefree}
The synthetic $\E_2$-page 
$${}_{\syn}\E_2^{*,*,*}\cong \Ext^{*,*}_{\cA (2)}(\bF_2,\bF_2)[\tau]$$
is free over $\bF_2[\tau,w_1,w_2]$.   
\end{proposition}

\begin{proposition}
\label{synE2R0mod}
The synthetic $\E_2$-page ${}_{\syn}\E_2^{*,*,*}$ is a direct sum of cyclic $R_0'$-modules. In particular, we have an isomorphism
\begin{equation*}
  {}_{\syn}\E_2^{*,*,*}\cong\Ext^{*,*}_{\cA (2)}(\bF_2,\bF_2)[\tau]\cong \bigoplus_{x}\dfrac{\bF_2[\tau,g,w_1,w_2]}{\mathrm{Ann}(x)}\{x\}, 
\end{equation*}
where $x$ ranges over all $R_0'$-module generators of $\Ext^{*,*}_{\cA (2)}(\bF_2,\bF_2)[\tau]$.
\end{proposition}

Analogous to \cite[Table 5.1]{BR21}, in Table~\ref{E2pagetable} we record the $R_0'$-generators $\{x\}$ of ${}_{\syn}\E_2^{*,*,*}$, the annihilator ideal $\mathrm{Ann}(x)$, and $d_2^{\syn}(x)$.

\subsection{\texorpdfstring{$\E_3$-page}{E3-page}}

With ${}_{\syn}\E_2^{*,*,*}$ and $d_2^{\syn}$-differentials at hand, we can compute \linebreak ${}_{\syn}\E_3^{*,*,*}$ as an $R_1'$-module. Many of the details of this are straightforward and are analogous to the details of the calculation of ${}_{\mathrm{cl}}\E_3^{*,*}$ written down in \cite[App. A.1]{BR21}. We will only write down proofs of summands of ${}_{\syn}\E_3^{*,*,*}$ which appear synthetically but not classically; i.e. become $0$ after applying $\tau$-localization. We record the $R_1'$-generators of ${}_{\syn}\E_3^{*,*,*}$ in Table~\ref{E3pagetable}, together with $d_3^{\syn}$-differentials on $R_1'$-module generators of ${}_{\syn}\E_3^{*,*,*}$, and non-cyclic summands in Table~\ref{noncycE3pagetable}.

\bigskip

Let $\Sigma^{s,t,w}R_i'$ denote a shifted copy of $R_i'$ by filtration $s$, total degree $t$, and weight $w$ and $\langle x_1,\ldots,x_n\rangle$ denote the $R_i'$-module generated by $x_1,\ldots,x_n$. As in \cite{BR21}, for cyclic summands we will suppress the shift notation. Altogether, there are three additional cyclic summands, $\langle h_0d_0\rangle$, $\langle h_0^2e_0\rangle$, and $\langle h_0^2e_0w_2\rangle$, and one additional non-cyclic summand, $\langle e_0\gamma,h_0d_0w_2\rangle$, of ${}_{\syn}\E_3^{*,*,*}$ compared to ${}_{\mathrm{cl}}\E_3^{*,*}$: 

\begin{lemma}
\label{h0d0summand}
The element $h_0d_0\in{}_{\syn}\E_2^{5,19,19}$ generates a cyclic summand of ${}_{\syn}\E_3^{*,*,*}$ isomorphic to $R_1'/(g^2,\tau)$.    
\end{lemma}

\begin{proof}
The $d_2^{\syn}$-differentials described in Table~\ref{E2pagetable} determine a chain complex of $R_1'$-modules
\begin{equation*}
\begin{tikzcd}[ampersand replacement=\&]
0 \arrow[r] \& \langle \alpha^3w_2\rangle \arrow[r, "{\begin{pmatrix}\tau g^3w_1\\ \tau w_1\end{pmatrix}}"] \arrow[d,equal] \& \langle\beta\rangle\oplus\langle h_1^2\gamma w_2\rangle \arrow[r, "{\begin{pmatrix}\tau & 0\end{pmatrix}}"] \arrow[d,equal] \& \langle h_0d_0\rangle \arrow[r] \arrow[d,equal] \& 0 \\
\& R_1' \& R_1'\oplus R_1'/(g) \& R_1'/(g^2)
\end{tikzcd}
\end{equation*}
Taking homology, we see that $\langle\alpha^3w_2\rangle$ disappears and $\langle h_0d_0\rangle$, $\langle\beta\rangle\oplus\langle h_1^2\gamma w_2\rangle$ become
\begin{align*}
 \langle h_0d_0\rangle &=R_1'/(g^2,\tau), \\
 \<\beta g^2,h_1^2\gamma w_2\rangle &= \dfrac{\Sigma^{11,66,66}R_1'\oplus\Sigma^{15,90,90}R_1'}{\<(\tau gw_1,\tau w_1),(0,g)\rangle}.
\end{align*}  
\end{proof}

\begin{lemma}
\label{h0h0e0summand}
The element $h_0^2e_0\in{}_{\syn}\E_2^{6,23,23}$ generates a cyclic summand of ${}_{\syn}\E_3^{*,*,*}$ isomorphic to $R_1'/(g,\tau)$.    
\end{lemma}

\begin{proof}
The $d_2^{\syn}$-differentials described in Table~\ref{E2pagetable} determine a chain complex of $R_1'$-modules
\begin{equation*}
\begin{tikzcd}[ampersand replacement=\&]
0 \arrow[r] \& \langle h_2\beta\rangle \arrow[r, "\tau"] \arrow[d,equal] \& \langle h_0^2e_0\rangle \arrow[r] \arrow[d,equal] \& 0 \\
\& R_1'/(g) \& R_1'/(g) \&
\end{tikzcd}
\end{equation*}
Taking homology, we see that $\langle h_2\beta w_2\rangle$ disappears and $\langle h_0^2e_0\rangle$ becomes
\begin{align*}
 \langle h_0^2e_0\rangle = R_1'/(g,\tau).
\end{align*}    
\end{proof}

\begin{lemma}
\label{h0h0e0w2summand}
The element $h_0^2e_0w_2\in{}_{\syn}\E_2^{14,79,79}$ generates a cyclic summand of ${}_{\syn}\E_3^{*,*,*}$ isomorphic to $R_1'/(g,\tau)$.    
\end{lemma}

\begin{proof}
The $d_2^{\syn}$-differentials described in Table~\ref{E2pagetable} determine a chain complex of $R_1'$-modules
\begin{equation*}
\begin{tikzcd}[ampersand replacement=\&]
0 \arrow[r] \& \langle h_2\beta w_2\rangle \arrow[r, "\tau"] \arrow[d,equal] \& \langle h_0^2e_0w_2\rangle \arrow[r] \arrow[d,equal] \& 0 \\
\& R_1'/(g) \& R_1'/(g) \&
\end{tikzcd}
\end{equation*}
Taking homology, we see that $\langle h_2\beta w_2\rangle$ disappears and $\langle h_0^2e_0w_2\rangle$ becomes
\begin{align*}
 \langle h_0^2e_0w_2\rangle = R_1'/(g,\tau).
\end{align*}
\end{proof}

\begin{lemma}
\label{e0gamma_and_h0d0w2summand}
The elements $e_0\gamma\in{}_{\syn}\E_2^{9,51,51}$ and $h_0d_0w_2\in{}_{\syn}\E_2^{13,75,75}$ generate a non-cyclic summand of ${}_{\syn}\E_3^{*,*,*}$ isomorphic to $$\dfrac{\Sigma^{9,51,51}R_1'\oplus\Sigma^{13,75,75}R_1'}{\<(\tau g,\tau),(0,g^2)\rangle}.$$    
\end{lemma}

\begin{proof}
The $d_2^{\syn}$-differentials described in Table~\ref{E2pagetable} determine a chain complex of $R_1'$-modules
\begin{equation*}
\begin{tikzcd}[ampersand replacement=\&]
0 \arrow[r] \& \langle\beta w_2\rangle \arrow[r, "{\begin{pmatrix}\tau g\\ \tau\end{pmatrix}}"] \arrow[d,equal] \& \langle e_0\gamma\rangle\oplus\langle h_0d_0 w_2\rangle \arrow[r] \arrow[d,equal] \& 0 \\
\& R_1' \& R_1'\oplus R_1'/(g^2) \&
\end{tikzcd}
\end{equation*}
Taking homology, we see that $\langle\beta w_2\rangle$ disappears and $\langle e_0\gamma\rangle\oplus\langle h_0d_0 w_2\rangle$ becomes
\begin{align*}
 \langle e_0\gamma, h_0d_0 w_2\rangle &= \dfrac{\Sigma^{9,51,51}R_1'\oplus\Sigma^{13,75,75}R_1'}{\<(\tau g,\tau),(0,g^2)\rangle}.
\end{align*}    
\end{proof}

\subsection{\texorpdfstring{$\E_4$-page}{E4-page}}

With ${}_{\syn}\E_3^{*,*,*}$ and $d_3^{\syn}$-differentials at hand, we can compute \linebreak ${}_{\syn}\E_4^{*,*,*}$ as an $R_2'$-module. Many of the details of this are straightforward and are analogous to the details of the calculation of ${}_{\mathrm{cl}}\E_4^{*,*}$ written down in \cite[App. A.2]{BR21}. Similar to the $\E_3$-page, we will only write down proofs of summands of ${}_{\mathrm{cl}}\E_4^{*,*}$ which either only appear synthetically or are significantly different to the classical case. We record the $R_2'$-generators of ${}_{\syn}\E_4^{*,*,*}$ in Table~\ref{E4pagetable}, together with $d_4^{\syn}$-differentials on $R_2'$-module generators of ${}_{\syn}\E_4^{*,*,*}$, and non-cyclic summands in Table~\ref{noncycE4pagetable}.

\bigskip

Altogether, there are eight additional cyclic summands and two additional non-cyclic summand of ${}_{\syn}\E_4^{*,*,*}$ compared to ${}_{\mathrm{cl}}\E_4^{*,*}$. Six of these cyclic summands are essentially the same as summands of ${}_{\syn}\E_3^{*,*,*}$, gotten by restricting the $R_1'$-action to $R_2'$; namely, these are
\begin{equation*}
\langle h_0d_0\rangle, \langle h_0^2e_0\rangle, \langle h_0^2e_0w_2\rangle,\langle h_0d_0w_2^2\rangle, \langle h_0^2e_0w_2^2\rangle, \langle h_0^2e_0w_2^3\rangle. 
\end{equation*}

The remaining two cyclic summands, $\langle e_0\gamma g+h_0d_0w_2\rangle$ and $\langle (e_0\gamma g+h_0d_0w_2)w_2^2\rangle$, originate from $\langle e_0\gamma,h_0d_0w_2\rangle\subset {}_{\syn}\E_3^{*,*,*}$. The additional non-cyclic summands, $\langle \beta g^2,\gamma^3\rangle$ and $\langle \gamma w_1w_2^2,h_0e_0w_2^3\rangle$, originate from powers of $\tau$ present in relations synthetically. We produce proofs of these four summands and proofs of the summands $\langle 1\rangle$, $\langle w_1w_2^2\rangle$, and $\langle \gamma,h_0e_0w_2,h_1w_2^2\rangle$, whose synthetic descriptions differ significantly from their classical descriptions.

\begin{remark}
    As \cite{BR21} do, in Lemma~\ref{E3w1w2w2summand} we do a change of basis from $h_1^2\gamma w_2$ to $\gamma^3=\beta g^3+h_1^2\gamma w_2$ when going from ${}_{\syn}\E_3^{*,*,*}$ to ${}_{\syn}\E_4^{*,*,*}$.
\end{remark}

\begin{lemma}
\label{1summandE4}
The element $1\in{}_{\syn}\E_3^{0,0,0}$ generates a cyclic summand of ${}_{\syn}\E_4^{*,*,*}$ isomorphic to $$R_2'/(\tau g^4w_1,\tau^2g^6,\tau^2g^2w_1),$$ and the elements $\gamma w_1w_2^2\in{}_{\syn}\E_3^{25,154,154}$, $h_0e_0w_2^3\in{}_{\syn}\E_3^{29,190,190}$ generate a non-cyclic summand of ${}_{\syn}\E_4^{*,*,*}$ isomorphic to
\begin{equation*}
\dfrac{\Sigma^{25,154,154}R_2'\oplus\Sigma^{29,190,190}R_2'}{\langle (\tau^2g^2,0),(\tau g^2,\tau w_1),(0,g)\rangle}.    
\end{equation*}
\end{lemma}

\begin{proof}
The $d_3^{\syn}$-differentials described in Table~\ref{E3pagetable} determine a chain complex of $R_2'$-modules
\begin{equation*}
\resizebox{12.6cm}{!}{
\begin{tikzcd}[ampersand replacement=\&]
0 \arrow[r] \& \langle h_1\gamma w_2\rangle \arrow[r, "{\begin{pmatrix}0\\ \tau^2g^2w_1\\ 0\end{pmatrix}}"] \arrow[d,equal] \& \langle h_1w_2\rangle\oplus\langle \gamma w_2^2,h_0e_0w_2^3\rangle \arrow[r, "{\begin{pmatrix}\tau^2g^2w_1 & \tau^2g^6 & 0\end{pmatrix}}"] \arrow[d,equal] \& \langle 1\rangle \arrow[r] \arrow[d,equal] \& 0 \\
\& R_2'/(g) \& R_2'/(g^2)\oplus \dfrac{R_2'\oplus R_2'}{\<(\tau g^2 w_1,\tau w_1),(0,g)\rangle} \& R_2'/(\tau g^4w_1)
\end{tikzcd}    
}
\end{equation*}
Taking homology, we see that $\langle h_1\gamma w_2\rangle$, $\langle h_1w_2\rangle$ disappear and $\langle 1\rangle$, $\langle \gamma w_2^2,h_0e_0w_2^3\rangle$ become
\begin{align*}
 \langle 1\rangle &=R_2'/(\tau g^4w_1,\tau^2g^6,\tau^2g^2w_1), \\
 \langle \gamma w_1w_2^2,h_0e_0w_2^3\rangle &= \dfrac{\Sigma^{25,154,154}R_2'\oplus\Sigma^{29,190,190}R_2'}{\langle (\tau^2g^2,0),(\tau g^2,\tau w_1),(0,g)\rangle}.
\end{align*}    
\end{proof}

\begin{lemma}
\label{e0plussummand}
The element $e_0\gamma g+h_0d_0w_2\in{}_{\syn}\E_3^{13,75,75}$ generates a cyclic summand of ${}_{\syn}\E_4^{*,*,*}$ isomorphic to $R_2'/(\tau)$.    
\end{lemma}

\begin{proof}
The $d_3^{\syn}$-differentials described in Table~\ref{E3pagetable} determine a chain complex of $R_2'$-modules
\begin{equation*}
\begin{tikzcd}[ampersand replacement=\&]
0 \arrow[r] \& \langle e_0\gamma,h_0d_0w_2\rangle \arrow[r, "{\begin{pmatrix}\tau^2w_1 & 0\end{pmatrix}}"] \arrow[d,equal] \& \langle h_1\delta\rangle \arrow[r] \arrow[d,equal] \& 0 \\
\& \dfrac{R_2'\oplus R_2'}{\<(\tau g,\tau),(0,g^2)\rangle} \& R_2'/(g) \&
\end{tikzcd}
\end{equation*}
Taking homology, we see that $\langle h_1\delta\rangle$ and $\langle e_0\gamma g,h_0d_0w_2\rangle$, after replacing $h_0d_0w_2$ with $e_0\gamma g+h_0d_0w_2$ via change-of-basis, become
\begin{align*}
 \langle h_1\delta\rangle &= R_2'/(g,\tau^2w_1), \\
 \langle e_0\gamma g\rangle &= R_2'/(\tau g^2), \\
 \langle e_0\gamma g+h_0d_0w_2\rangle &= R_2'/(\tau).
\end{align*}    
\end{proof}

\begin{lemma}
\label{E3w1w2w2summand}
The element $w_1w_2^2\in{}_{\syn}\E_3^{20,124,124}$ generates a cyclic summand of ${}_{\syn}\E_4^{*,*,*}$ isomorphic to $R_1'/(\tau g^4,\tau^2g^2)$ and the elements $\beta g^2\in{}_{\syn}\E_3^{11,66,66}$ and $\gamma^3\in{}_{\syn}\E_3^{15,90,90}$ generate a non-cyclic summand of ${}_{\syn}\E_4^{*,*,*}$ isomorphic to
\begin{equation*}
\dfrac{\Sigma^{11,66,66}R_2'\oplus\Sigma^{15,90,90}R_2'}{\langle (g^2,g),(0,\tau w_1),(\tau^2g^2,0)\rangle}.   
\end{equation*}
\end{lemma}

\begin{proof}
The $d_3^{\syn}$-differentials described in Table~\ref{E3pagetable} determine a chain complex of $R_2'$-modules
\begin{equation*}
\begin{tikzcd}[ampersand replacement=\&]
0 \arrow[r] \& \langle h_1w_2^3\rangle \arrow[r, "\tau^2g^2w_1"] \arrow[d,equal] \& \langle w_2^2\rangle \arrow[r, "{\begin{pmatrix}\tau^2g^2 \\ 0\end{pmatrix}}"] \arrow[d,equal] \& \langle \beta g^2,h_1^2\gamma w_2\rangle \arrow[r] \arrow[d,equal] \& 0 \\
\& R_2'/(g^2) \& R_2'/(\tau g^4w_1) \& \dfrac{R_2'\oplus R_2'}{\<(\tau gw_1,\tau w_1),(0,g)\rangle}
\end{tikzcd}
\end{equation*}
Taking homology, we see that $\langle h_1w_2^3\rangle$ disappears and $\langle w_2^2\rangle$ and $\langle\beta g^2,h_1^2\gamma w_2\rangle$, after replacing $h_1^2\gamma w_2$ with $\gamma^3=\beta g^3+h_1^2\gamma w_2$ via change-of-basis, become
\begin{align*}
 \langle w_1w_2^2\rangle &=R_2'/(\tau g^4,\tau^2g^2), \\
 \<\beta g^2\rangle &= R_2'/(\tau^2g^2), \\
 \langle \beta g^2,\gamma^3\rangle &= \dfrac{R_2'\oplus R_2'}{\langle (g^2,g),(0,\tau w_1),(\tau^2g^2,0)\rangle}.
\end{align*}    
\end{proof}

\begin{lemma}
\label{e0plusw2w2summand}
The element $(e_0\gamma g+h_0d_0w_2)w_2^2\in{}_{\syn}\E_3^{29,187,187}$ generates a cyclic summand of ${}_{\syn}\E_4^{*,*,*}$ isomorphic to $R_2'/(\tau)$.    
\end{lemma}

\begin{proof}
The $d_3^{\syn}$-differentials described in Table~\ref{E3pagetable} determine a chain complex of $R_2'$-modules
\begin{equation*}
\begin{tikzcd}[ampersand replacement=\&]
0 \arrow[r] \& \langle e_0\gamma w_2^2,h_0d_0w_2^3\rangle \arrow[r, "{\begin{pmatrix}\tau^2w_1 & 0\end{pmatrix}}"] \arrow[d,equal] \& \langle h_1\delta w_2^2\rangle \arrow[r] \arrow[d,equal] \& 0 \\
\& \dfrac{R_2'\oplus R_2'}{\<(\tau g,\tau),(0,g^2)\rangle} \& R_2'/(g) \&
\end{tikzcd}
\end{equation*}
Taking homology, we see that $\langle h_1\delta w_2^2\rangle$ and $\langle e_0\gamma w_2^2,h_0d_0w_2^3\rangle$, after replacing $h_0d_0w_2^3$ with $(e_0\gamma g+h_0d_0w_2)w_2^2$ via change-of-basis, become
\begin{align*}
 \langle h_1\delta w_2^2\rangle &= R_2'/(g,\tau^2w_1), \\
 \langle e_0\gamma g w_2^2\rangle &= R_2'/(\tau g^2), \\
 \langle (e_0\gamma g+h_0d_0w_2)w_2^2\rangle &= R_2'/(\tau).
\end{align*}     
\end{proof}

\begin{lemma}
\label{3termE4summand}
The elements $\gamma\in{}_{\syn}\E_3^{5,30,30}$, $h_0e_0w_2\in{}_{\syn}\E_3^{13,78,78}$, and $h_1w_2^2\in{}_{\syn}\E_3^{17,114,114}$ generate a non-cyclic summand of ${}_{\syn}\E_4^{*,*,*}$ isomorphic to
\begin{equation*}
    \dfrac{\Sigma^{5,30,30}R_2'\oplus\Sigma^{13,78,78}R_2'\oplus\Sigma^{17,114,114}R_2'}{\langle (\tau g^2w_1,\tau w_1,0),(0,g,0),(\tau^2g^5,0,\tau^2gw_1),(\tau^2g^2w_1,0,0)\rangle}.
\end{equation*}
\end{lemma}

\begin{proof}
The $d_3^{\syn}$-differentials described in Table~\ref{E3pagetable} determine a chain complex of $R_2'$-modules
\begin{equation*}
\begin{tikzcd}[ampersand replacement=\&]
0 \arrow[r] \& \langle h_1\gamma w_2 \rangle\oplus\langle \beta^2w_2^2\rangle \arrow[r, "{\begin{pmatrix}\tau^2g^2w_1 & \tau^2g^5 \\ 0 & 0\\ 0 & \tau^2gw_1\end{pmatrix}}"] \arrow[d,equal] \& \langle\gamma,h_0e_0w_2\rangle\oplus\langle h_1w_2^2\rangle \arrow[r] \arrow[d,equal] \& 0 \\
\& R_2'/(g)\oplus R_2'/(\tau g^2w_1) \& \dfrac{R_2'\oplus R_2'}{\<(\tau g^2 w_1,\tau w_1),(0,g)\rangle}\oplus R_2'/(g^2) \&
\end{tikzcd}
\end{equation*}
Taking homology, we see that $\langle h_1\gamma w_2 \rangle$ disappears and $\langle \beta^2w_2^2\rangle$, $\langle \gamma,h_0e_0w_2\rangle$, and $\langle h_1w_2^2\rangle$ become
\begin{align*}
 \langle \beta^2gw_1w_2^2\rangle &= R_2'/(\tau g), \\
 \langle \gamma,h_0e_0w_2,h_1w_2^2\rangle &= \dfrac{\Sigma^{5,30,30}R_2'\oplus\Sigma^{13,78,78}R_2'\oplus\Sigma^{17,114,114}R_2'}{\langle (\tau g^2w_1,\tau w_1,0),(0,g,0),(\tau^2g^5,0,\tau^2gw_1),(\tau^2g^2w_1,0,0)\rangle}.
\end{align*}    
\end{proof}

\subsection{\texorpdfstring{$\E_{\infty}$-page}{E-infinity page}}

With ${}_{\syn}\E_4^{*,*,*}$ and $d_4^{\syn}$-differentials at hand, we can compute \linebreak ${}_{\syn}\E_5^{*,*,*}$ as an $R_2'$-module. By \cite[Thm. 5.27]{BR21}, ${}_{\syn}\E_5^{*,*,*}={}_{\syn}\E_{\infty}^{*,*,*}$, so we will not need to worry about calculating higher pages. Many of the details of this are straightforward and are analogous to the details of the calculation of ${}_{\mathrm{cl}}\E_{\infty}^{*,*}$ written down in \cite[App. A.3]{BR21}. Similar to the $\E_4$-page, we will only write down proofs of summands of ${}_{\mathrm{cl}}\E_{\infty}^{*,*}$ which either only appear synthetically or are significantly different to the classical case. We record the $R_2'$-generators of ${}_{\syn}\E_{\infty}^{*,*,*}$ in Table~\ref{Einfpagetable}, together with names for homotopy elements detected by the $R_2'$-module summands of ${}_{\syn}\E_\infty^{*,*,*}$, and non-cyclic summands in Table~\ref{noncycEinfpagetable}.

\bigskip

Altogether, there are twelve additional cyclic summands and one additional non-cyclic summand of ${}_{\syn}\E_\infty^{*,*,*}$ compared to ${}_{\mathrm{cl}}\E_{\infty}^{*,*}$. Eight of these cyclic summands have the same description as their analogs in ${}_{\syn}\E_4^{*,*,*}$; namely, these are
\begin{align*}
&\langle h_0d_0\rangle, \langle h_0^2e_0\rangle, \langle e_0\gamma g+h_0d_0w_2\rangle, \langle h_0^2e_0w_2\rangle, \\
\langle h_0d_0&w_2^2\rangle , \langle h_0^2e_0w_2^2\rangle, \langle (e_0\gamma g+h_0d_0w_2)w_2^2\rangle, \langle h_0^2e_0w_2^3\rangle. 
\end{align*}
The remaining four cyclic summands, $\langle\alpha^2g^2\rangle$, $\langle d_0e_0g^2\rangle$, $\langle\alpha^2g^2w_2^2\rangle$, and $\langle d_0e_0g^2w_2^2\rangle$ originate as the kernel of $d_4^{\syn}$-differentials. The additional non-cyclic summand, $\langle \gamma w_1w_2^2,h_0e_0w_2^3\rangle$, originates from powers of $\tau$ present in relations synthetically. We produce proofs of these five summands and proofs of the summands $\langle 1\rangle$, $\langle w_1w_2^2\rangle$, and $\langle \gamma,h_0e_0w_2,h_1w_2^2\rangle$, whose synthetic descriptions differ significantly from their classical descriptions.

\begin{lemma}
\label{1summandEinf}
The element $1\in{}_{\syn}\E_4^{0,0,0}$ generates a cyclic summand of ${}_{\syn}\E_{\infty}^{*,*,*}$ isomorphic to $$R_2'/(\tau g^4w_1,\tau^2g^6,\tau^2g^2w_1,\tau^3gw_1^2).$$
\end{lemma}

\begin{proof}
The $d_4^{\syn}$-differentials described in Table~\ref{E4pagetable} determine a chain complex of $R_2'$-modules
\begin{equation*}
\begin{tikzcd}[ampersand replacement=\&]
0 \arrow[r] \& \langle e_0g\rangle \arrow[r, "\tau^3gw_1^2"] \arrow[d,equal] \& \langle 1\rangle \arrow[r] \arrow[d,equal] \& 0 \\
\& R_2'/(\tau g^2) \& R_2'/(\tau g^4w_1,\tau^2g^6,\tau^2g^2w_1) \&
\end{tikzcd}
\end{equation*}
Taking homology, we see that $\langle 1\rangle$ and $\langle e_0g\rangle$ become
\begin{align*}
\langle 1\rangle &= R_2'/(\tau g^4w_1,\tau^2g^6,\tau^2g^2w_1,\tau^3gw_1^2), \\
\langle e_0g^2\rangle &= R_2'/(\tau g). 
\end{align*}   
\end{proof}

\begin{lemma}
\label{aaggsummand}
The element $\alpha^2g^2\in{}_{\syn}\E_4^{14,78,78}$ generates a cyclic summand of ${}_{\syn}\E_{\infty}^{*,*,*}$ isomorphic to $R_2'/(\tau).$
\end{lemma}

\begin{proof}
The $d_4^{\syn}$-differentials described in Table~\ref{E4pagetable} determine a chain complex of $R_2'$-modules
\begin{equation*}
\begin{tikzcd}[ampersand replacement=\&]
0 \arrow[r] \& \langle \alpha^2g\rangle \arrow[r, "\tau^3w_1^2"] \arrow[d,equal] \& \langle \alpha\beta\rangle \arrow[r] \arrow[d,equal] \& 0 \\
\& R_2'/(\tau g) \& R_2'/(\tau g) \&
\end{tikzcd}
\end{equation*}
Taking homology, we see that $\langle\alpha^2g\rangle$ and $\langle\alpha\beta\rangle$ become
\begin{align*}
\langle\alpha^2g^2\rangle &= R_2'/(\tau), \\
\langle\alpha\beta\rangle &= R_2'/(\tau g,\tau^3w_1^2). 
\end{align*}    
\end{proof}

\begin{lemma}
\label{d0e0ggsummand}
The element $d_0e_0g^2\in{}_{\syn}\E_4^{16,87,87}$ generates a cyclic summand of ${}_{\syn}\E_{\infty}^{*,*,*}$ isomorphic to $R_2'/(\tau).$
\end{lemma}

\begin{proof}
The $d_4^{\syn}$-differentials described in Table~\ref{E4pagetable} determine a chain complex of $R_2'$-modules
\begin{equation*}
\begin{tikzcd}[ampersand replacement=\&]
0 \arrow[r] \& \langle d_0e_0\rangle \arrow[r, "\tau^3w_1^2"] \arrow[d,equal] \& \langle d_0\rangle \arrow[r] \arrow[d,equal] \& 0 \\
\& R_2'/(\tau g^2) \& R_2'/(\tau g^3,\tau^2g^2w_1) \&
\end{tikzcd}
\end{equation*}
Taking homology, we see that $\langle d_0e_0\rangle$ and $\langle d_0\rangle$ become
\begin{align*}
\langle d_0e_0g^2\rangle &= R_2'/(\tau), \\
\langle d_0\rangle &= R_2'/(\tau g^3,\tau^2g^2w_1,\tau^3w_1^2). 
\end{align*}    
\end{proof}

\begin{lemma}
\label{E4w1w2w2summand}
The element $w_1w_2^2\in{}_{\syn}\E_4^{20,124,124}$ generates a cyclic summand of ${}_{\syn}\E_{\infty}^{*,*,*}$ isomorphic to $$R_2'/(\tau g^4,\tau^2g^2,\tau^3gw_1).$$
\end{lemma}

\begin{proof}
The $d_4^{\syn}$-differentials described in Table~\ref{E4pagetable} determine a chain complex of $R_2'$-modules
\begin{equation*}
\begin{tikzcd}[ampersand replacement=\&]
0 \arrow[r] \& \langle e_0gw_2^2\rangle \arrow[r, "\tau^3gw_1"] \arrow[d,equal] \& \langle w_1w_2^2\rangle \arrow[r] \arrow[d,equal] \& 0 \\
\& R_2'/(\tau g^2) \& R_2'/(\tau g^4,\tau^2g^2) \&
\end{tikzcd}
\end{equation*}
Taking homology, we see that $\langle e_0gw_2^2\rangle$ and $\langle w_1w_2^2\rangle$ become
\begin{align*}
\langle e_0g^2w_2^2\rangle &= R_2'/(\tau g), \\
\langle w_1w_2^2\rangle &= R_2'/(\tau g^4,\tau^2g^2,\tau^3gw_1). 
\end{align*}   
\end{proof}

\begin{lemma}
\label{aaggw2w2summand}
The element $\alpha^2g^2w_2^2\in{}_{\syn}\E_4^{30,190,190}$ generates a cyclic summand of ${}_{\syn}\E_{\infty}^{*,*,*}$ isomorphic to $R_2'/(\tau).$
\end{lemma}

\begin{proof}
The $d_4^{\syn}$-differentials described in Table~\ref{E4pagetable} determine a chain complex of $R_2'$-modules
\begin{equation*}
\begin{tikzcd}[ampersand replacement=\&]
0 \arrow[r] \& \langle \alpha^2gw_2^2\rangle \arrow[r, "\tau^3w_1^2"] \arrow[d,equal] \& \langle \alpha\beta w_2^2\rangle \arrow[r] \arrow[d,equal] \& 0 \\
\& R_2'/(\tau g) \& R_2'/(\tau g) \&
\end{tikzcd}
\end{equation*}
Taking homology, we see that $\langle\alpha^2gw_2^2\rangle$ and $\langle\alpha\beta w_2^2\rangle$ become
\begin{align*}
\langle\alpha^2g^2w_2^2\rangle &= R_2'/(\tau), \\
\langle\alpha\beta w_2^2\rangle &= R_2'/(\tau g,\tau^3w_1^2). 
\end{align*}    
\end{proof}

\begin{lemma}
\label{d0e0ggw2w2summand}
The element $d_0e_0g^2w_2^2\in{}_{\syn}\E_4^{32,199,199}$ generates a cyclic summand of ${}_{\syn}\E_{\infty}^{*,*,*}$ isomorphic to $R_2'/(\tau).$
\end{lemma}

\begin{proof}
The $d_4^{\syn}$-differentials described in Table~\ref{E4pagetable} determine a chain complex of $R_2'$-modules
\begin{equation*}
\begin{tikzcd}[ampersand replacement=\&]
0 \arrow[r] \& \langle d_0e_0w_2^2\rangle \arrow[r, "\tau^3w_1^2"] \arrow[d,equal] \& \langle d_0w_2^2\rangle \arrow[r] \arrow[d,equal] \& 0 \\
\& R_2'/(\tau g^2) \& R_2'/(\tau g^3,\tau^2g^2w_1) \&
\end{tikzcd}
\end{equation*}
Taking homology, we see that $\langle d_0e_0w_2^2\rangle$ and $\langle d_0w_2^2\rangle$ become
\begin{align*}
\langle d_0e_0g^2w_2^2\rangle &= R_2'/(\tau), \\
\langle d_0w_2^2\rangle &= R_2'/(\tau g^3,\tau^2g^2w_1,\tau^3w_1^2). 
\end{align*}    
\end{proof}

\begin{lemma}
\label{Einfnoncycsummand1}
The elements $\gamma w_1w_2^2\in{}_{\syn}\E_4^{25,154,154}$ and $h_0e_0w_2^3\in{}_{\syn}\E_4^{29,190,190}$ generate a non-cyclic summand of ${}_{\syn}\E_\infty^{*,*,*}$ isomorphic to
\begin{equation*}
\dfrac{\Sigma^{25,154,154}R_2'\oplus\Sigma^{29,190,190}R_2'}{\langle (0,g),(\tau g^2,\tau w_1),(\tau^2g^2,0),(\tau^3gw_1,0)\rangle}.    
\end{equation*}
\end{lemma}

\begin{proof}
The $d_4^{\syn}$-differentials described in Table~\ref{E4pagetable} determine a chain complex of $R_2'$-modules
\begin{equation*}
\begin{tikzcd}[ampersand replacement=\&]
0 \arrow[r] \& \langle e_0\gamma gw_2^2\rangle \arrow[r, "{\begin{pmatrix}\tau^3gw_1 & 0\end{pmatrix}}"] \arrow[d,equal] \& \langle \gamma w_1w_2^2,h_0e_0w_2^3\rangle \arrow[r] \arrow[d,equal] \& 0 \\
\& R_2'/(\tau g^2) \& \dfrac{R_2'\oplus R_2'}{\langle (0,g),(\tau g^2,\tau w_1),(\tau^2g^2,0)\rangle} \&
\end{tikzcd}
\end{equation*}
Taking homology, we see that $\langle e_0gw_2^2\rangle$ and $\langle\gamma w_1w_2^2,h_0e_0w_2^3\rangle$ become
\begin{align*}
\langle e_0\gamma g^2w_2^2\rangle &= R_2'/(\tau g), \\
\langle\gamma w_1w_2^2,h_0e_0w_2^3\rangle &= \dfrac{\Sigma^{25,154,154}R_2'\oplus \Sigma^{29,190,190}R_2'}{\langle (0,g),(\tau g^2,\tau w_1),(\tau^2g^2,0),(\tau^3gw_1,0)\rangle}. 
\end{align*}    
\end{proof}

\begin{lemma}
\label{Einfnoncycsummand2}
The elements $\gamma\in{}_{\syn}\E_4^{5,30,30}$, $h_0e_0w_2\in{}_{\syn}\E_4^{13,78,78}$, and $h_1w_2^2\in{}_{\syn}\E_4^{17,114,114}$ generate a non-cyclic summand of ${}_{\syn}\E_\infty^{*,*,*}$ isomorphic to
\begin{equation*}
    \dfrac{\Sigma^{5,30,30}R_2'\oplus\Sigma^{13,78,78}R_2'\oplus\Sigma^{17,114,114}R_2'}{\renewcommand{\arraystretch}{1}{\begin{tabular}{c}$\langle (0,g,0),(0,0,g^2),(\tau g^2w_1,\tau w_1,0),$\\$(\tau^2g^5,0,\tau^2gw_1),(\tau^2g^2w_1,0,0),(\tau^3 gw_1^2,0,0)\rangle$\end{tabular}}}.
\end{equation*}
\end{lemma}

\begin{proof}
The $d_4^{\syn}$-differentials described in Table~\ref{E4pagetable} determine a chain complex of $R_2'$-modules
\begin{equation*}
\begin{tikzcd}[ampersand replacement=\&]
0 \arrow[r] \& \langle e_0\gamma g\rangle \arrow[r, "{\begin{pmatrix}\tau^3gw_1^2 & 0\end{pmatrix}}"] \arrow[d,equal] \& \langle \gamma,h_0e_0w_2,h_1w_2^2\rangle \arrow[r] \arrow[d,equal] \& 0 \\
\& R_2'/(\tau g^2) \& \dfrac{R_2'\oplus R_2'\oplus R_2'}{\renewcommand{\arraystretch}{1}{\begin{tabular}{c}$\langle (0,g,0),(0,0,g^2),(\tau g^2w_1,\tau w_1,0),$\\$(\tau^2g^5,0,\tau^2gw_1),(\tau^2g^2w_1,0,0)\rangle$\end{tabular}}} \&
\end{tikzcd}
\end{equation*}
Taking homology, we see that $\langle e_0gw_2^2\rangle$ and $\langle\gamma w_1w_2^2,h_0e_0w_2^3\rangle$ become
\begin{align*}
\langle e_0\gamma g^2\rangle &= R_2'/(\tau g), \\
\langle \gamma,h_0e_0w_2,h_1w_2^2\rangle &= \dfrac{\Sigma^{5,30,30}R_2'\oplus\Sigma^{13,78,78}R_2'\oplus\Sigma^{17,114,114}R_2'}{\renewcommand{\arraystretch}{1}{\begin{tabular}{c}$\langle (0,g,0),(0,0,g^2),(\tau g^2w_1,\tau w_1,0),$\\$(\tau^2g^5,0,\tau^2gw_1),(\tau^2g^2w_1,0,0),(\tau^3 gw_1^2,0,0)\rangle$\end{tabular}}}. 
\end{align*}    
\end{proof}

\section{Synthetic Multiplicative Structure}
\label{multstructsection}

\numberwithin{theorem}{section}

In this section, we study the multiplicative structure of $\pi_{*,*}\nu_{\h\bF_2}tmf$. The work of \cite{BR21} together with Section~\ref{AdamsSScalcdetailssection} give us a complete description of ${}_{\syn}\E_\infty^{*,*,*}$ as an $R_2'$-module. As in the classical calculation, ${}_{\syn}\E_\infty^{*,*,*}$ is infinitely generated but only because of the presence of $h_0$-towers. We record the lists of cyclic and non-cyclic generators, and the names of homotopy elements they detect, in Table~\ref{Einfpagetable} and Table~\ref{noncycEinfpagetable}.

\bigskip

\cite[Ch. 9]{BR21} show that classically, 40 elements generate $\pi_{*}tmf$ as a $\bZ_{2}^{\wedge}$-algebra. One reason for considering $\bZ_2^{\wedge}$ is because $\pi_*tmf$ is a $\pi_0tmf$-algebra and $\pi_0tmf\cong\bZ_2^{\wedge}$. Analogously, we will consider $\pi_{*,*}\nu_{\h\bF_2}tmf$ as a $\pi_{0,*}\nu_{\h\bF_2}tmf$-algebra. The structure of $\pi_{0,*}\nu_{\h\bF_2}tmf$ is more subtle, but understandable using the work of \cite{Pst23} and \cite[App. A]{BHS19}. 

\begin{proposition}
\label{pi0prop}
As a $\bZ_2^{\wedge}[\tau]$-algebra,
\begin{equation*}
    \pi_{0,*}\nu_{\h\bF_2}tmf\cong\bZ_2^{\wedge}[\tau,\ttwo]/(\tau\ttwo=2) 
\end{equation*}
where $\tau\in\pi_{0,-1}$, $2\in\pi_{0,0}$, and $\ttwo\in\pi_{0,1}$.
\end{proposition}

\begin{proof}
To make notation easier, we will let $\pi_{0,w}$ denote $\pi_{0,w}\nu_{\h\bF_2}tmf$ and $\pi_0$ denote $\pi_0tmf$ in this proof.

\bigskip


We first notice that $\pi_{0,w}\cong\pi_0\cong\bZ_2^{\wedge}$ induced by $\tau$-inversion for $w\leq 0$ by \cite[Thm. 4.58]{Pst23}, and the map $$\pi_{0,w}\xrightarrow{\tau}\pi_{0,w-1}$$ is an isomorphism. We still need to understand $w>0$, and this is where we use the $\nu\h\bF_2$-Adams spectral sequence, which for $\nu_{\h\bF_2}tmf$ is a spectral sequence of $\bZ[\tau]$-algebras. Since $tmf:=tmf_2^{\wedge}$ is $2$-complete and finite type, by Theorem~\ref{mainsynthm} $\nu_{\h\bF_2}tmf$ is $\nu\h\bF_2$-complete and the $\nu\h\bF_2$-Adams spectral sequence for $\nu_{\h\bF_2}tmf$ converges strongly. What this specifically means for the $\nu\h\bF_2$-Adams filtration $\F^s\pi_{0,w}$ for a fixed $w>0$ is that
\begin{itemize}
    \item $\lim_s \F^s\pi_{0,w}=0$, $\F^s\pi_{0,w}=\pi_{0,w}$ for $s\leq 0$,
    \item $\pi_{0,w}\cong\lim_s(\pi_{0,w}/\F^s)$,
    \item $\F^s/\F^{s+1}\cong {}_{\syn}\E_{\infty}^{s,s,w}$.
\end{itemize}
In the usual way, we can compute $\pi_{0,w}$ if we can solve the extension problem for the short exact sequences
\begin{equation*}
0\to {}_{\syn}\E^{s,s,w}_{\infty}\to\pi_{0,w}/\F^{s+1}\to\pi_{0,w}/\F^s\to 0.   
\end{equation*}
We solve this inductively using $\tau$-inversion. Now $\E_{\infty}^{s,s,w}=0$ for $s<w$ and for $s=w$, $\E_{\infty}^{s,s,s}=\pi_{0,w}/\F^{w+1}=\bF_2$, detected by $h_0^w$. We write $\ttwo^w$ for the corresponding element in $\pi_{0,w}$, which is sensible since this is a spectral sequence of algebras. The next step is to understand the extension
\begin{equation*}
    0\to\bF_2\{\tau h_0^{w+1}\}\to\pi_{0,w}/\F^{w+2}\to\bF_2\{\ttwo^w\}\to 0.
\end{equation*}

The map ${}_{\syn}\E_{\infty}^{s,s,s}\to {}_{\mathrm{cl}}\E_{\infty}^{s,s}$ sends $h_0^s\mapsto h_0^s$ so that the map $\pi_{0,s}\to\pi_0$ sends $\ttwo^s\mapsto 2^s$. As noted above, we also have $2\mapsto 2$ via the isomorphism $\pi_{0,0}\xrightarrow{\cong}\pi_0$. Synthetically, this means that we must have the relation
\begin{equation*} 
    \tau\ttwo^{w+1}=2\cdot\ttwo^w\in\pi_{0,w},
\end{equation*}
because, if not, the only other option is $2\cdot\ttwo^w=0$ but this contradicts the obvious formula $2\cdot 2^w=2^{w+1}$ after $\tau$-inversion. This solves this extension problem, and applying the same argument inductively we get extensions
\begin{equation*}
    0\to\bF_2\{\tau^{n}h_0^{w+n}\}\to\pi_{0,w}/\F^{w+n+1}\to\bZ/2^n\{\ttwo^w\}\to 0,
\end{equation*}
with $\pi_{0,w}/\F^{w+n+1}\cong\bZ/2^{n+1}\{\ttwo^w\}$ so that
\begin{equation*}
\pi_{0,w}\cong\bZ_2^{\wedge}\{\ttwo^w\}.
\end{equation*}
As a graded object, this gives us a surjection of $\bZ_2^{\wedge}[\tau]$-algebras $\bZ_2^{\wedge}[\tau,\ttwo]\to\pi_{0,*}$. By the discussion above, $\tau\ttwo=2$ holds in $\pi_{0,*}$ so that this map factors through a map of algebras
\begin{equation*}
\bZ_2^{\wedge}[\tau,\ttwo]/(\tau\ttwo=2)\to\pi_{0,*},   
\end{equation*}
which is an isomorphism since both sides are rank one in bidegree $(0,w)$.
\end{proof}

\begin{remark}
In fact, the same argument shows that $$\pi_{0,*}\nu_{\h\bF_2}\bS_2^{\wedge}\cong\pi_{*,*}\nu_{\h\bF_2}\h\bZ_2^{\wedge}\cong\bZ_2^{\wedge}[\tau,\ttwo]/(\tau\ttwo=2)$$
and the commutative diagram
\begin{equation*}
    \begin{tikzcd}
        \nu_{\h\bF_2}\bS_2^{\wedge} \ar[d] \ar[dr] & \\
        \nu_{\h\bF_2}tmf_2^{\wedge} \ar[r] & \nu_{\h\bF_2}\h\bZ_2^{\wedge}
    \end{tikzcd}
\end{equation*}
induces isomorphisms on $\pi_{0,*}$. The relation $\tau\ttwo=2$ first appeared in \cite[Prop. A.20]{BHS19} and the calculation of $\pi_{*,*}\nu_{\h\bF_2}\h\bZ_2^{\wedge}$ is mentioned in a version of \cite{Bur22}, though not proven in this level of detail with the $\nu\h\bF_2$-Adams spectral sequence. Keeping in mind the philosophy that synthetic spectra encapsulate the Adams spectral sequences of spectra, the relation $\tau\ttwo=2=\nu_{\h\bF_2}(2)$ is capturing the fact that $2\in\pi_0tmf$ lives in $\h\bF_2-$Adams filtration 1.  
\end{remark}

We now will begin to describe $\pi_{*,*}\nu_{\h\bF_2}tmf$ as a $\bZ_2^{\wedge}[\tau,\ttwo]/(\tau\ttwo=2)$-algebra. As a first step, we enumerate the $\bF_2[\tau]$-algebra generators of ${}_{\syn}\E_\infty^{*,*,*}$.

\begin{proposition}
\label{synEinfalgelements}
As an $\bF_2[\tau]$-algebra, ${}_{\syn}\E_\infty^{*,*,*}$ is generated by 45 elements. There are two additional generators synthetically: $e_0\gamma g+h_0d_0w_2$ and $(e_0\gamma g+h_0d_0w_2)w_2^2$. We list all 45 elements in Table~\ref{Einfalggentable}, using the notation of \cite{BR21}.
\end{proposition}

\begin{proof}
The proof is essentially the same as the proofs given in \cite[Ch. 5]{BR21}, where algebra generators are determined for ${}_{\cl}\E_3^{*,*,*}$, ${}_{\cl}\E_4^{*,*,*}$, and ${}_{\cl}\E_{\infty}^{*,*,*}$ via inspection of their $R_i$-module structure. We only need to determine if there are additional algebra generators by checking $R_i'$-summands of ${}_{\syn}\E_3^{*,*,*}$, ${}_{\syn}\E_4^{*,*,*}$, and ${}_{\syn}\E_\infty^{*,*,*}$ which only appear synthetically. We determined these additional summands in Section~\ref{AdamsSScalcdetailssection}.

\bigskip

For the additional summands of ${}_{\syn}\E_3^{*,*,*}$, we see that 
\begin{align*}
h_0d_0=h_0&\cdot d_0,  h_0^2e_0=h_0^2\cdot e_0,  h_0^2e_0w_2=h_0e_0\cdot h_0w_2, \\ &e_0\gamma=e_0\cdot\gamma,  h_0d_0w_2=d_0\cdot h_0w_2,    
\end{align*}
so that there are no new algebra generators of ${}_{\syn}\E_3^{*,*,*}$ (see \cite[Table 5.4]{BR21} for full list of classical generators).

\bigskip

For the additional summands of ${}_{\syn}\E_4^{*,*,*}$, we see that
\begin{align*}
h_0d_0&=h_0\cdot d_0,  h_0^2e_0=h_0h_2\cdot d_0,  h_0^2e_0w_2=h_0e_0\cdot h_0w_2, h_0d_0w_2^2=d_0\cdot h_0w_2^2, \\
&\hspace{1.5cm}h_0^2e_0w_2^2=h_0h_2\cdot d_0w_2^2,
h_0^2e_0w_2^3=h_0d_0\cdot h_2w_2^3, \\
&\hspace{2.2cm}e_0\gamma g+h_0d_0w_2=e_0g\cdot\gamma + d_0\cdot h_0w_2, \\ &\hspace{1.6cm}(e_0\gamma g+h_0d_0w_2)w_2^2=e_0gw_2^2\cdot\gamma + d_0\cdot h_0w_2^3, \\ &\hspace{2cm}\gamma w_1w_2^2 = \gamma\cdot w_1w_2^2, h_0e_0w_2^3= d_0\cdot h_2w_2^3,
\end{align*}    

so that there are no new algebra generators of ${}_{\syn}\E_4^{*,*,*}$ (see \cite[Table 5.7]{BR21} for full list of classical generators).

\bigskip

For the additional summands of ${}_{\syn}\E_{\infty}^{*,*,*}$, we see that
\begin{align*}
h_0d_0=h_0\cdot d_0,  &h_0^2e_0=h_0h_2\cdot d_0,  h_0^2e_0w_2=h_0e_0\cdot h_0w_2, h_0d_0w_2^2=d_0\cdot h_0w_2^2, \\
&h_0^2e_0w_2^2=h_0h_2\cdot d_0w_2^2,
h_0^2e_0w_2^3=h_0d_0\cdot h_2w_2^3, \\
&\hspace{.5cm}\gamma w_1w_2^2 = \gamma\cdot w_1w_2^2, h_0e_0w_2^3= d_0\cdot h_2w_2^3, \\
&\hspace{.5cm}\alpha^2g^2=\delta^2 +(\delta')^2, d_0e_0g^2=d_0\cdot\gamma\cdot\delta', \\ \alpha^2g^2&w_2^2=\delta\cdot\delta w_2^2 +\delta'\cdot\delta'w_2^2, d_0e_0g^2w_2^2=d_0w_2^2\cdot\gamma\cdot\delta'.
\end{align*}

The remaining summands, $e_0\gamma g+h_0d_0w_2$ and $(e_0\gamma g+h_0d_0w_2)w_2^2$, through a careful check of the algebra generators (see \cite[Table 5.10]{BR21}) of lesser degree and relations present in ${}_{\syn}\E_{\infty}^{*,*,*}$, are not decomposable and so must be indecomposable algebra generators.
\end{proof}

Returning to $\pi_{*,*}\nu_{\h\bF_2}tmf$, Proposition~\ref{synEinfalgelements} tells us that as a $\bZ_2^{\wedge}[\tau,\ttwo]/(\tau\ttwo=2)$-algebra, $\pi_{*,*}\nu_{\h\bF_2}tmf$ is generated by at most 44 elements. In the homotopy groups of spectra, the presence of relations hidden in higher filtration can cause certain generators to become decomposable. This is the case, for example, with $\alpha^3g+h_0w_1w_2$ and $\alpha^3gw_2^2+h_0w_1w_2^3$, which in homotopy turn out to be the targets of hidden 2-extensions. For formal reasons, synthetically there are no indecomposable algebra generators of ${}_{\syn}\E_{\infty}^{*,*,*}$ which become decomposable in homotopy. We prove this as a consequence of a very general lemma:

\begin{lemma}
\label{taudivisiblelemma}
Suppose we have an Adams-type ring spectrum $E\in\Sp$ and an $E$-nilpotent complete, commutative ring spectrum $R\in\Sp$ such that the $E$-Adams spectral sequence for $R$ converges strongly. Suppose also that there are non-zero, non-$\tau$-divisible homotopy elements $$\alpha\in\pi_{k,k+s}(\nu_E R) \text{ and } \beta\in\pi_{k',k'+s'}(\nu_ER)$$ detected by $a\in{}_{\nu E}\E_{\infty}^{s,k+s,k+s}$ and $b\in{}_{\nu E}\E_{\infty}^{s',k'+s',k'+s'}$. If $ab=0\in{}_{\nu E}\E_{\infty}$, then $\alpha\beta$ is $\tau$-divisible.
\end{lemma}

\begin{proof}
This essentially follows from the characterization of $\nu E$-Adams filtration in \cite[App. A]{BHS19}. The $\nu E$-Adams SS for $\nu_ER$ converges strongly by \cite[Prop. A.16]{BHS19} so that, in particular, $${}_{\nu E}\E_{\infty}^{s+s',k+k'+s+s',k+k'+s+s'}\cong \F^{s+s'}/\F^{s+s'+1}$$ where $\F^*$ is the $\nu E$-Adams filtration on $\pi_{k+k',k+k'+s+s'}$. By assumption, $ab=0$ so that $\alpha\beta\in\F^{s+s'+1}$. By \cite[Cor. A.19]{BHS19}, $\F^{s+s'+1}=\F^{1}_{\tau}$ where $\F^*_{\tau}$ is the $\tau$-Bockstein filtration on $\pi_{k+k',k+k'+s+s'}$. This implies that $\alpha\beta\in\mathrm{im}(\tau)$ by the definition of the $\tau$-Bockstein filtration, so that $\alpha\beta$ is $\tau$-divisible.  
\end{proof}

\begin{corollary}
As a $\bZ_2^{\wedge}[\tau,\ttwo]/(\tau\ttwo=2)$-algebra, $\pi_{*,*}\nu_{\h\bF_2}tmf$ is generated by 44 indecomposable elements. These elements (with the addition of $\ttwo$) are listed in Table~\ref{Einfalggentable} together with the elements of ${}_{\syn}\E_{\infty}^{*,*,*}$ which detect them. 
\end{corollary}

\begin{proof}
Because there are no hidden $\tau$-extensions in the $\nu\h\bF_2$-Adams spectral sequence, every element listed in Table~\ref{Einfalggentable} detects a non-$\tau$-divisible element in \linebreak $\pi_{*,*}\nu_{\h\bF_2}tmf$. If such an indecomposable algebra generator of ${}_{\syn}\E_{\infty}^{*,*,*}$ detects $x\in\pi_{k,k+s}\nu_{\h\bF_2}tmf$ and $x$ is decomposable as $x=yz$ in homotopy, then the product of elements in ${}_{\syn}\E_{\infty}^{*,*,*}$ which detect $y,z$ would need to be $0$ in ${}_{\syn}\E_{\infty}^{*,*,*}$, so that by Lemma~\ref{taudivisiblelemma}, $yz$ is $\tau$-divisible. However this contradicts non-$\tau$-divisibility of $x$.    
\end{proof}

\begin{remark}
    Because synthetically there are four more algebra generators, we have introduced new names for these four elements. The elements $TB_2\in\pi_{56,69}$ and $TB_6\in\pi_{152,181}$ are detected by $\alpha^3g+h_0w_1w_2$ and $(\alpha^3g+h_0w_1w_2)w_2^2$ respectively. We chose these names because after $\tau$-inversion, $TB_2=2\cdot B_2$ and $TB_6=2\cdot B_6$. The elements $x_{62}\in\pi_{62,75}$ and $x_{158}\in\pi_{158,187}$ are detected by $e_0\gamma g+h_0d_0w_2$ and $(e_0\gamma g+h_0d_0w_2)w_2^2$ respectively. We named these elements after the stem they live in.
\end{remark}

We now consider the actions of $B\in\pi_{8,12}\nu_{\h\bF_2}tmf$ and $M\in\pi_{192,224}\nu_{\h\bF_2}tmf$ on $\pi_{*,*}\nu_{\h\bF_2}tmf$. Classically, there are non-zero $B$-torsion classes which arise from differentials killing $w_1$-multiples. Synthetically, $B$-multiples survive for non-$\tau$-divisible classes:

\begin{theorem}
\label{Bnonzerothm}
If $x\in\pi_{*,*}\nu_{\h\bF_2}tmf$ is non-zero and non-$\tau$-divisible, then $x\cdot B^n\in\pi_{*,*}\nu_{\h\bF_2}tmf$ is non-zero for all $n\geq 0$.
\end{theorem}

\begin{proof}
Inspecting the long exact sequence
\begin{equation*}
    \cdots\to \pi_{k,w+1}\nu_{\h\bF_2}tmf\xrightarrow{\tau}\pi_{k,w}\nu_{\h\bF_2}tmf\xrightarrow{i_*}\Ext^{w-k,w}_{\cA}(H^*(tmf),\bF_2)\to\cdots,
\end{equation*}
we see that $i_*(x)\neq 0$ and $i_*(B)=w_1$. By Proposition~\ref{E2pagefree}, $i_*(x)\cdot w_1^n\neq 0$ for $n\geq 0$ and because $i:\nu_{\h\bF_2}tmf\to\nu_{\h\bF_2}tmf\otimes C\tau$ is an $\bE_{\infty}$-ring map, this implies that $x\cdot B^n\neq 0$.
\end{proof}

\begin{remark}
Being non-$\tau$-divisible is key to the validity of the above theorem. For example, $\nu\cdot B^n\in\pi_{3+8n,4+12n}\nu_{\h\bF_2}tmf$ is non-zero for all $n\geq 0$ but $\tau\nu\cdot B^n=0$ for $n\geq 1$. 
\end{remark}

Looking at Figures~\ref{chart1}-\ref{chart8}, it appears as if repeated multiplication by $B$ on $\tau$-power torsion classes preserves the level of $\tau$-power torsion, with the exception of some of the classes of the form $\eta_1^n\kappabar^k$ along a line of slope $\frac{1}{5}$. We show that this is indeed the case:

\begin{theorem}
\label{Btautorsionthm}
Suppose $x\in\pi_{*,*}\nu_{\h\bF_2}tmf$ is a $\tau$-power torsion class such that $\tau^rx=0$ but $\tau^{r-1}x\neq 0$ with $r\geq 1$. Then $\tau^{r-1}x\cdot B^n\neq 0$ for all $n\geq 0$, with the exception of the families of elements
\begin{equation*}
\begin{tabular}{cccc}
$\kappabar^{k+6},$ & $\eta_1\kappabar^{k+6},$ & $\eta_1^2\kappabar^{k+5},$ & $\eta_1^3\kappabar^{k+1},$   
\end{tabular}
\end{equation*}
for $k\geq 0$. For these elements $\tau^2x=0$, $\tau x\neq 0$, and $\tau x\cdot B=0$. 
\end{theorem}

\begin{proof}
In $\pi_{*,*}\nu_{\h\bF_2}tmf$, we only need to consider the cases $r=1,2,3$ since there is no higher $\tau$-power torsion. For $r=1$, the statement holds since ${}_{\syn}\E_{2}^{*,*,*}$ is free as a $\bF_2[\tau,w_1]$-module together with Theorem~\ref{Bnonzerothm}.

\bigskip

For $r=2$, it suffices to investigate the $R_2'$-module generators $y$ of ${}_{\syn}\E_{4}^{*,*,*}$ in Table~\ref{E4pagetable} which have some relations in $\mathrm{Ann}(y)$ containing terms $\tau^2$ and $\tau\cdot w_1^{m+1}$. The only such $R_2'$-modules are
\begin{equation*}
\begin{tabular}{cccccc}
$\langle 1\rangle$, & $\langle \gamma,h_0e_0w_2,h_1w_2^2\rangle,$ & $\langle \beta^2g\rangle$, & $\langle \beta g^2,\gamma^3\rangle$. 
\end{tabular}
\end{equation*}
In considering each module, we only worry about multiples $x\cdot g^kw_1^m$ for $k,m\geq 0$ since multiplication by $w_2^4$ doesn't affect $\tau$-power torsion. For $\langle 1\rangle$, $\mathrm{Ann}(1)=(\tau g^4w_1,\tau^2g^2w_1,\tau^2 g^6)$ so that $g^2w_1^{m+1}$ and $g^3w_1^{m+1}$ are $\tau^2$-torsion and $g^{k+4}w_1^{m+1}$ are $\tau$-torsion for $k,m\geq 0$. In particular, $g^{k+6}w_1$ becomes $\tau$-torsion. For \linebreak $\langle \gamma,h_0e_0w_2,h_1w_2^2\rangle,$ we record the relevant multiples in Table~\ref{3termE4tautortable}.
\numberwithin{table}{section}
\begin{table}[ht]
\caption{$\tau^r$-torsion for $\langle \gamma,h_0e_0w_2,h_1w_2^2\rangle$}
\label{3termE4tautortable}
\begin{tabular}{|c|cccc|}
\hline
$g,w_1$-multiples & $\gamma g^2w_1^{m+1}$ & $(\gamma g^2+h_0e_0w_2)w_1^{m+1}$ & $\gamma g^3w_1^{m+1}$ & $\gamma g^4w_1^{m+1}$     \\ \cline{1-1}
$\tau^r$-torsion  & $\tau^2$              & $\tau$                            & $\tau$                & $\tau$                    \\ \hline
$g,w_1$-multiples & $\gamma g^5w_1^{m+1}$ & $(\gamma g^5+h_1gw_1w_2^2)w_1^m$  & $\gamma g^{n+6}$      & $\gamma g^{n+6}w_1^{m+1}$ \\ \cline{1-1}
$\tau^r$-torsion  & $\tau$                & $\tau^2$                          & $\tau^2$              & $\tau$                    \\ \hline
\end{tabular}
\end{table}
In particular, $\tau$-power torsion is preserved after multiplying by $w_1$ except for $\gamma g^{k+6}$. (Note that there are no hidden $\kappabar$-extensions for the elements in homotopy detected by $h_0e_0w_2$ and $h_1gw_1w_2^2$). For $\langle \beta^2g\rangle$, we first note that there is a relation $\beta^2g\cdot g=\gamma^2g$. Now $\mathrm{Ann}(\beta^2g)=(\tau gw_1,\tau^2g^5)$ so that $\gamma^2g^{k+5}$ is $\tau^2$-torsion and $\gamma^2g^{k+1}w_1^{m+1}$ is $\tau$-torsion. In particular, $\gamma^2g^{k+5}w_1$ becomes $\tau$-torsion. For $\langle \beta g^2,\gamma^3\rangle$, $\gamma^3g^{k+1}$ is $\tau^2$-torsion and $\gamma^3g^{k}w_1^{m+1}$ is $\tau$-torsion. In particular, $\gamma^3g^{k+1}w_1$ becomes $\tau$-torsion.

\bigskip

For $r=3$, it suffices to investigate the $R_2'$-module generators $y$ of ${}_{\syn}\E_{\infty}^{*,*,*}$ in Table~\ref{Einfpagetable} which have some relations in $\mathrm{Ann}(y)$ containing terms $\tau^3$ and at least one of $\tau^2\cdot w_1^{m+1}$ or $\tau\cdot w_1^{m+1}$. The only such $R_2'$-modules are
\begin{equation*}
\resizebox{12.6cm}{!}{%
\begin{tabular}{llllll}
$\langle 1\rangle$, & $\langle d_0\rangle$, & $\langle \gamma,h_0e_0w_2,h_1w_2^2\rangle$, & $\langle d_0\gamma,h_2w_2\rangle$, & $\langle \gamma w_1w_2^2,h_0e_0w_2^3\rangle$, & $\langle d_0\gamma w_2^2,h_2w_2^3\rangle.$
\end{tabular}
}
\end{equation*}

An analysis similar to the one done above for $r=2$ shows that multiplication by $w_1^m$ for $g$-multiples of these classes always preserves $\tau^3$-torsion.
\end{proof}

For studying the action of $M$, we first have the following easy consequence of our calculation of ${}_{\syn}\E_{\infty}^{*,*,*}$:

\begin{theorem}
\label{Mnonzerothm}
If $x\in\pi_{*,*}\nu_{\h\bF_2}tmf$ is non-zero, then $x\cdot M^n\in\pi_{*,*}\nu_{\h\bF_2}tmf$ is non-zero for all $n\geq 0$.    
\end{theorem}

\begin{proof}
This follows immediately from the $R_2'$-module structure of ${}_{\syn}\E_{\infty}^{*,*,*}$, recorded in Table~\ref{Einfpagetable}.
\end{proof}

In fact, more can be said regarding the action of $M$. One of the interesting properties of $tmf$ is that it is polynomial on $M$, generated in some sense by classes between topological degrees 0 and 191 (see \cite[p. 32]{Bau08}, \cite[Thm. 9.27]{BR21}, \cite[Ch. 13]{DFHH14}). A synthetic version of this is also true. Let $N_{[0,192)}\subset\pi_{*,*}\nu_{\h\bF_2}tmf$ denote the $\bZ_2^{\wedge}[\tau,\ttwo,\kappabar,B]/(\tau\ttwo=2)$-submodule generated by all classes in topological degrees $0\leq k <192$. 

\begin{theorem}
\label{Mfreethem}
As $\bZ_2^{\wedge}[\tau,\ttwo,\kappabar,B,M]/(\tau\ttwo=2)$-modules, there is an isomorphism 
\begin{equation*}
    N_{[0,192)}\otimes \bZ_2^{\wedge}[M]\xrightarrow{\cong}\pi_{*,*}\nu_{\h\bF_2}tmf.
\end{equation*}
\end{theorem}

\begin{proof}
By our calculations of ${}_{\syn}\E_{\infty}^{*,*,*}$, $w_2^4$, which detects $M$, acts freely on ${}_{\syn}\E_{\infty}^{*,*,*}$ considered as a $\bF_2[\tau,h_0,g,w_1,w_2^4]$-module. In particular, there is some finitely generated $\bF_2[\tau,h_0,g,w_1]$-module $A$ such that $${}_{\syn}\E_{\infty}^{*,*,*}\cong A\otimes_{\bF_2}\bF_2[w_2^4]$$ as an $\bF_2[\tau,h_0,g,w_1,w_2^4]$-module and the generators of $A$ are concentrated in topological degrees $0\leq k<192$. In addition, the associated graded of $$\bZ_2^{\wedge}[\tau,\ttwo,\kappabar,B]/(\tau\ttwo=2)$$ is isomorphic to $\bF_2[\tau,h_0,g,w_1]$ and $A$ can be chosen so that the associated graded of $N_{[0,192)}$ is isomorphic to $A$. This means that the natural map
\begin{equation*}
    N_{[0,192)}\otimes \bZ_2^{\wedge}[M]\xrightarrow{\varphi}\pi_{*,*}\nu_{\h\bF_2}tmf.
\end{equation*}
induces an isomorphism on associated gradeds, so that $\varphi$ is an isomorphism since the Adams filtration is complete and Hausdorff by strong convergence.
\end{proof}

\begin{remark}
In classical versions of this theorem, the action of $\kappabar$ is not needed since $\kappabar^6=0$. However, synthetically, $\kappabar$ is non-nilpotent and $\kappabar^n$ is not equal to some class of the form $x\cdot M^k$ for $n,k\geq 0$.
\end{remark}

\subsection{Synthetic Hidden Extensions}
\numberwithin{theorem}{subsection}
We now investigate multiplicative structure that is not readily available from ${}_{\syn}\E_{\infty}^{*,*,*}$. We will not attempt to compute the entire multiplicative structure of $\pi_{*,*}\nu_{\h\bF_2}tmf$ as \cite{BR21} do for $\pi_*tmf$. We restrict ourselves to multiplications by $\ttwo$, $\eta$, $\nu$, and $\kappabar$. The $\E_{\infty}$-page ${}_{\syn}\E_{\infty}^{*,*,*}$ computes the associated graded of the $\nu\h\bF_2$-Adams filtration on $\pi_{*,*}\nu_{\h\bF_2}tmf$. When the product of two elements in $\E_{\infty}$ is zero, this means that the corresponding product in homotopy is not necessarily zero, but rather lives in higher filtration. It takes extra work to determine what the actual relation in homotopy is. If the product in homotopy happens to not be zero, then the lowest filtration term of the product is called a \textit{hidden extension}. The precise definition for spectra originates from \cite[Def. 4.2]{Isa19} but we present the synthetic definition in the same generality as \cite[Def. 9.5]{BR21}:

\begin{definition}
\label{hidextdef}
Let $X\otimes Y\to Z$ be a pairing of synthetic spectra in $\Syn_E$, with induced pairings $\pi_{*,*}X\otimes\pi_{*,*}Y\to\pi_{*,*}Z$ and ${}_{\nu E}\E_{\infty}(X)\otimes{}_{\nu E}\E_{\infty}(Y)\to{}_{\nu E}\E_{\infty}(Z)$, where ${}_{\nu E}\E_{\infty}(-)$ denotes the $\E_{\infty}$-page of the $\nu_E E$-Adams spectral sequence. Let $\overline{x}\in \pi_{*,*}X$ be detected by $x\in{}_{\nu E}\E_{\infty}(X)$, and consider classes $y\in{}_{\nu E}\E_{\infty}(Y)$ and $z\in{}_{\nu E}\E_{\infty}(Z)$. We say there is a \textit{hidden $\overline{x}$-extension} from $y$ to $z$ if
\begin{enumerate}
    \item $xy=0$,
    \item there is an element $\overline{y}\in\pi_{*,*}Y$ detected by $y$ such that $\overline{x}\cdot\overline{y}$ is detected by $z$, and
    \item there is no element $\overline{y}'\in\pi_{*,*}Y$ of higher filtration than $\overline{y}$ such that $\overline{x}\cdot\overline{y}'$ is detected by $z$.
\end{enumerate}
\end{definition}

For the remainder of the section, we let $E=\h\bF_2$ and $X=Y=Z=\nu_{\h\bF_2}tmf$.

\bigskip

Suppose $R$ is a 2-complete $\bE_{\infty}$-ring spectrum. There are three general methods for computing synthetic hidden extensions in $\pi_{*,*}(\nu_{\h\bF_2} R)$ which we will use for $R=tmf$:

\begin{enumerate}
    \item The $\bE_{\infty}$-ring structure on $\nu_{\h\bF_2} R$ gives rise to a bigraded commutative ring structure on $\pi_{*,*}(\nu_{\h\bF_2} R)$. Information about the ring structure can reveal certain hidden extensions.
    \item The ring map $\pi_{*,*}(\nu_{\h\bF_2} R)\to\pi_*(R)$ induced by $\tau$-inversion allows for classical hidden extensions to pull back to synthetic ones.
    \item Let $\overline{x}\in\pi_{a,b}(\nu_{\h\bF_2}R)$ and suppose that the image of $\overline{x}$ under the map $i:\pi_{a,b}(\nu_{\h\bF_2}R)\to\pi_{a,b}(\nu_{\h\bF_2}R\otimes C\tau)\cong\Ext^{b-a,b}_{\mathcal{A}}(H^*(R),\bF_2)$ is nonzero. Let $x:=i(\overline{x})$. By Theorem~\ref{tauthm} and the fact that the $\pi_{*,*}(\nu_{\h\bF_2}R)$-module structure of $\pi_{*,*}(\nu_{\h\bF_2} R \otimes C\tau)$ factors through $\pi_{*,*}(\nu_{\h\bF_2}R\otimes C\tau)$, we get a commuting diagram of LES's
\begin{equation*}
\begin{tikzcd}[cramped, mysep=1.2em]
\cdots \ar[r] & \Ext^{w-t-2,w-1}_{\cA}(R) \ar[r] \ar[d,"x"] & \pi_{t,w}(\nu_{\h\bF_2} R) \ar[r,"\tau"] \ar[d,"\overline{x}"] & \pi_{t,w-1}(\nu_{\h\bF_2} R) \ar[r] \ar[d,"\overline{x}"] & \cdots \\
\cdots \ar[r] & \Ext^{(w-t)+(b-a)-2,w+b-1}_{\cA}(R) \ar[r] & \pi_{t+a,w+b}(\nu_{\h\bF_2} R) \ar[r,"\tau"] & \pi_{t+a,w+b-1}(\nu_{\h\bF_2} R) \ar[r] & \cdots \\
\end{tikzcd}
\end{equation*}
where $\Ext^{*,*}_{\cA}(R):=\Ext^{*,*}_{\cA}(H^*(R),\bF_2)$. Via this diagram, some hidden extensions can be determined by the known $\bF_2$-algebra structure of \linebreak $\Ext^{*,*}_{\cA}(H^*(R),\bF_2)$. We refer to this diagram as the $C\tau$-LES. This method was originally used in \cite[Appendix A.2]{BHS19} to deduce a hidden extension in the Toda range of $\pi_{*,*}(\nu_{\h\bF_2}\bS_2^{\wedge})$.
\end{enumerate}

\begin{exmp}
To make notation easier, define $\pi_{a,b}(tmf):=\pi_{a,b}(\nu_{\h\bF_2}tmf)$ and $\Ext^{a,b}(tmf):=\Ext^{a,b}_{\cA}(H^*(tmf),\bF_2)$. Note that $\ttwo\in\pi_{0,1}(tmf)$, $\eta\in\pi_{1,2}(tmf)$, $\nu\in\pi_{3,4}(tmf)$, and $\kappabar\in\pi_{20,24}(tmf)$ are not $\tau$-multiples of any elements and hence have non-zero $C\tau$-images (see \cite[Appendix A.2]{BHS19} for the definitions of these synthetic homotopy elements for $\pi_{*,*}(\bS^{0,0})$. We omit the tilde notation for all homotopy elements except $\ttwo$). These images are given by $h_0\in\Ext^{1,1}$, $h_1\in\Ext^{1,2}$, $h_2\in\Ext^{1,4}$, and $g\in\Ext^{4,24}$ respectively.

\bigskip

Method (3) we find to be the most useful for calculating hidden extensions on $\tau$-torsion elements. For a fully worked-out example of method (3) for $\overline{x}=\ttwo$, see Lemma~\ref{t.17.6}.
\end{exmp}

\begin{remark}
\label{CtauLESremark}
The $C\tau$-LES is remarkable in that it only uses algebraic $\Ext$ data to produce hidden extensions. In fact, it can sometimes be used to calculate \emph{classical} hidden extensions purely from $\Ext$ data. For example, we use the $C\tau$-LES in Lemma~\ref{e.64.14} below to prove the synthetic $\eta$-hidden extension $\eta\eta_1^2\kappa=\tau B \eta_1\epsilon_1$. Division by $\eta_1$ then gives the classical $\eta$-hidden extension $\eta \eta_1\kappa = \tau B \epsilon_1$ (or, in classical terms, $\eta \eta_1\kappa= B\epsilon_1=\epsilon\epsilon_1$). This indicates that the $C\tau$-LES could be yet another tool for calculating hidden extensions in other classical Adams spectral sequence calculations.
\end{remark}

\begin{remark}
In our case, the above three methods suffice for finding nearly all $\ttwo$-, $\eta$-, $\nu$-, and $\kappabar$-hidden extensions. There are three exceptions, namely Lemma~\ref{e.60.14}, Lemma~\ref{e.72.15.1}, and Lemma~\ref{k.120.23} where we use Toda bracket methods to deduce the hidden extensions. We thank Dan Isaksen for suggesting parts of these particular arguments. 
\end{remark}

\begin{remark}
With the exception of two hidden $\kappabar$-extensions, proved in Section~\ref{otherhidextsubsection}, our methods show that all other hidden $\kappabar$-extensions follow from the classical calculations of \cite{BR21} and our calculations of hidden $\ttwo$-, $\eta$-, and $\nu$-extensions.
\end{remark}

We first rule out some particular hidden extensions:

\begin{theorem}
\label{nohidextthm}
There are no hidden $\tau$-, $B$-, or $M$-extensions in $\pi_{*,*}(\nu_{\h\bF_2}tmf)$.   
\end{theorem}

\begin{proof}
No hidden $\tau$-extensions follows from the fact that the $\nu\h\bF_2$-Adams spectral sequence is isomorphic to the $\tau$-Bockstein spectral sequence and the $\tau$-Bockstein spectral sequence has no hidden $\tau$-extensions. No hidden $M$-extensions follows from Theorem~\ref{Mnonzerothm}. For hidden $B$-extensions, there are no hidden $B$-extensions between $\tau$-free classes by \cite[Thm. 9.10]{BR21}. If we have a non-zero class $x\in{}_{\syn}\E_{\infty}^{*,*,*}$ such that $xw_1=0$, then $x$ must be $\tau$-divisible by Theorem~\ref{Bnonzerothm}. Let $x=\tau^ay$ where $y$ is non-$\tau$-divisible and $yw_1\neq 0$ is $\tau^b$-torsion for some $b\leq a$. If $\overline{x}$ and $\overline{y}$ are detected in homotopy by $x$ and $y$ respectively, then $\overline{x}=\tau^a\overline{y}$ and $\overline{y}\cdot B\neq 0$ is $\tau^b$-torsion, since there are no hidden $\tau$-extensions. Then $\overline{x}\cdot B=\tau^a\overline{y}\cdot B=0$ so that there are no hidden $B$-extensions.
\end{proof}

Our main result in this subsection is the following:

\begin{theorem}
\label{mainhidextthm}
All possible hidden $\ttwo$-, $\eta$-, $\nu$-, and $\kappabar$-extensions of $\pi_{*,*}(\nu_{\h\bF_2}tmf)$ are computable and described in Table~\ref{ttwoextensiontable}, Table~\ref{etaextensiontable}, Table~\ref{nuextensiontable}, and Table~\ref{kappabarextensiontable}, respectively.
\end{theorem}

\begin{proof}
In Section 3, we determined ${}_{\syn}\E_{\infty}^{*,*,*}$ as an $R_2'$-module, as recorded in Table~\ref{Einfpagetable} and Table~\ref{noncycEinfpagetable}. Let $\overline{x}$ be one of $\ttwo$, $\eta$, $\nu$, or $\kappabar$. For an $R_2'$-module generator $y\in{}_{\syn}\E_{\infty}^{*,*,*}$, it suffices to determine multiplications $\overline{x}\cdot\overline{y}\kappabar^n$ for $n\geq 0$ by Theorem~\ref{nohidextthm}. These multiplications are proven in Sections~\ref{ttwohidextsubsection}, ~\ref{etahidextsubsection}, ~\ref{nuhidextsubsection}, and ~\ref{otherhidextsubsection}.    
\end{proof}

Because hidden extensions describe the lowest filtration part of a relation associated to the hidden extension, this is only the start in determining the full relation. However, in our case, it turns out that via the classical relations of $\pi_{*}tmf$ determined by \cite{BR21} and Theorem~\ref{mainhidextthm}, this is enough to determine the full synthetic relations associated to these hidden extensions:

\begin{theorem}
\label{fullmultthm}
The bigraded homotopy groups $\pi_{*,*}(\nu_{\h\bF_2}tmf)$, as a module over $\bZ_2^{\wedge}[\tau,\ttwo,\eta,\nu,\kappabar,B,M]/(\tau\ttwo=2)$, are generated by 59 elements, recorded in Table~\ref{homotopymodgentable}, and subject to the relations described by Figures~\ref{chart1}-\ref{chart8}, Table~\ref{kappabarextensiontable}, Table~\ref{otherrelationstable}, Theorem~\ref{Btautorsionthm}, and Theorem~\ref{Mfreethem}.
\end{theorem}

\begin{proof}
All non-hidden extensions and all possible hidden extensions follow from the calculation of ${}_{\syn}\E_{\infty}$, Theorem~\ref{mainhidextthm}, and Theorem~\ref{nohidextthm}. To find the full relations associated to each $\overline{x}$-extension with $\overline{x}\in\{\ttwo,\eta,\nu,\kappabar\}$, we need to look at higher filtration terms. We only need to check $\kappabar$-multiples of generators of ${}_{\syn}\E_{\infty}$ by Theorem~\ref{nohidextthm}.

\bigskip

Possible $\tau$-free terms in higher filtration can be checked classically via \cite[Ch.9]{BR21}. A careful check of Figures~\ref{chart1}-\ref{chart8} together with \cite{BR21} shows that the only such relations are
\begin{align*}
    \nu^2\nu_4&=\tau\eta\epsilon_4+\tau^2\eta_1\kappabar^4, \\
    D_4\kappa&=\ttwo\kappa_4=T\kappa_4+\tau \eta_1^2\kappabar^3, \\
    D_4\kappa\kappabar&=\ttwo\kappa_4\kappabar=T\kappa_4\kappabar+\tau\eta_1^2\kappabar^4, \\
    \eta^2\epsilon_5&=T\kappa_4\kappabar+\tau\eta_1^2\kappabar^4. 
\end{align*}
We prove the relations which only occur synthetically in Section~\ref{otherhidextsubsection}. The other possibility to check for is $\tau$-power torsion terms in higher filtration. However, a careful check of Figures~\ref{chart1}-\ref{chart8} shows there are no such possibilities other than the hidden extensions already proven in Section~\ref{ttwohidextsubsection}, Section~\ref{etahidextsubsection}, Section~\ref{nuhidextsubsection}, and Section~\ref{otherhidextsubsection}. 
\end{proof}

\subsection{Synthetic Hidden Extension Proofs}

\numberwithin{theorem}{subsection}

Suppose we are considering possible hidden $\overline{x}$-extensions from $y$ to $z$. We only need to worry about the following possibilities:
\begin{enumerate}
    \item $y$ and $z$ are both $\tau$-free,
    \item $y$ is $\tau$-free and $z$ is $\tau$-power torsion,
    \item $y$ and $z$ are both $\tau$-power torsion. 
\end{enumerate}

Hidden extensions of the form $(1)$ follow from classical hidden extensions, since $\tau$-localization $\tau^{-1}:\pi_{k,w}\to\pi_k$ sends non-zero $\tau$-free classes to non-zero classes. The possibility where $y$ is $\tau$-power torsion and $z$ is $\tau$-free can't happen since $\tau^{-1}$ sends $\tau$-power torsion classes to zero.

\bigskip

We will use the next four subsections to produce the technical proofs for all synthetic hidden extensions of the form $(2)$ or $(3)$ and other relations associated to certain hidden extensions. We have included classical hidden extensions with appropriate multiples of $\tau$ in the tables in Appendix~\ref{appendix} without proof. We refer the reader to \cite[Section 9.2]{BR21} for the full proofs.

\bigskip

\noindent\textbf{Notation.} For the sake of brevity, we shorten $\Ext^{s,t}_{\cA}(H^*(tmf),\bF_2)$ to $\Ext^{s,t}(tmf)$ and $\pi_{t,w}(\nu_{\h\bF_2}tmf)$ to $\pi_{t,w}(tmf)$. For an element $x\in \E_{\infty}$ of the $\nu \h\bF_2$-based Adams spectral sequence, we write $\{x\}\in\pi_{t,w}$ for a lift of $x$ to an element in the Adams filtration of $\pi_{t,w}$. We use the same notation for synthetic homotopy group elements as classical ones with the exception of $\ttwo\in\pi_{0,1}(\nu_{\h\bF_2}\bS_2^{\wedge})\cong \pi_{0,1}(\nu_{\h\bF_2}tmf)$, which maps to $2\in\pi_0(\bS_2^{\wedge})\cong\pi_0(tmf)$ under $\tau$-localization. This matches the notation used in \cite{Bur20}.

\subsection{\texorpdfstring{$\ttwo$}{Tilde 2}-hidden extensions}
\label{ttwohidextsubsection}

\begin{lemma}
\label{t.17.6}
There is a hidden $\ttwo$-extension from $\ttwo\nu\kappa\in\pi_{17,23}(tmf)$ to $\tau\eta \epsilon B\in\pi_{17,24}(tmf)$.
\end{lemma}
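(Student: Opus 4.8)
The plan is to use method (3), the $C\tau$-LES, since this is exactly the situation it is designed for: after inverting $\tau$ both sides vanish (classically $2\nu\kappa=0$ and $\eta B\epsilon=0$), so methods (1)--(2) cannot see the extension and it is genuinely synthetic, supported on $\tau$-torsion. Take $\alpha=\ttwo\in\pi_{0,1}(\bS^{0,0})$, whose $C\tau$-image is $z=i(\ttwo)=h_0\in\Ext^{1,1}$. Specializing the $C\tau$-LES to $X=tmf_2^{\wedge}$ at the bidegree $(t,w)=(17,23)$ of the source produces the commuting square
\[
\begin{tikzcd}
\Ext^{4,22}(tmf) \ar[r,"p"] \ar[d,"h_0"'] & \pi_{17,23}(tmf) \ar[d,"\ttwo"] \\
\Ext^{5,23}(tmf) \ar[r,"p"'] & \pi_{17,24}(tmf)
\end{tikzcd}
\]
where the vertical maps are multiplication by $h_0$ and by $\ttwo$, and the horizontal maps are the connecting homomorphisms $p$ of the top and bottom $C\tau$-LES's. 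By exactness the image of each $p$ is precisely the $\tau$-torsion subgroup (the kernel of multiplication by $\tau$).

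First I would check that $\ttwo\nu\kappa$ is $\tau$-torsion. Since $\nu\kappa$ is detected by $h_2d_0\in\Ext^{5,22}$ and $h_0h_2=0$ in $\Ext_{\cA(2)}(\bF_2,\bF_2)$, the class $\ttwo\nu\kappa$ has no detecting class in filtration $6$, hence is detected in higher filtration; the $E_\infty$-chart of \hyperref[Appendix A]{Appendix A} shows it is $\tau$-torsion, so by exactness $\ttwo\nu\kappa=p(\xi)$ for some $\xi\in\Ext^{4,22}(tmf)$ (a stem-$18$, filtration-$4$ class; the natural candidate generator is $h_2\beta$). Commutativity of the square then gives $\ttwo\cdot\ttwo\nu\kappa=\ttwo\,p(\xi)=p(h_0\xi)$. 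This value is independent of the chosen lift: two lifts differ by an element of $\ker p=\operatorname{im}i$, and $h_0\cdot\operatorname{im}i\subseteq\operatorname{im}i=\ker p$ because $i$ is a map of $\pi_{*,*}(\bS^{0,0})$-modules with $i(\ttwo\cdot-)=h_0\cdot i(-)$, so $p(h_0(\xi-\xi'))=0$. It remains to (i) pin down $\xi$, (ii) compute $h_0\xi\in\Ext^{5,23}(tmf)$ from the $\cA(2)$-module structure recorded in \cite{BR21}, and (iii) identify $p(h_0\xi)=\tau\eta B\epsilon$.

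The crux, and the main obstacle, is step (iii): showing that $p(h_0\xi)$ is exactly $\tau\eta B\epsilon$ and not $0$ or some other class of $\pi_{17,24}(tmf)$. Because $p$ raises Adams filtration — here landing in the filtration-$8$ class $\tau h_1w_1c_0$ detecting $\tau\eta B\epsilon$, where $\eta B\epsilon$ is detected by $h_1w_1c_0\in\Ext^{8,25}$ — this is precisely a hidden $\tau$-extension in the $tmf$ chart, invisible after inverting $\tau$. I would establish it by combining the explicit action of $\Ext_{\cA(2)}(\bF_2,\bF_2)$ on itself near stems $17$--$18$ with the $\tau$-torsion bookkeeping read off from the $E_\infty$-chart, verifying both that the nonzero class $h_0\xi\in\Ext^{5,23}$ has $p$-image inside the $\tau$-torsion generated by $h_1w_1c_0$ and that this image is nonzero. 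The independence argument above removes any ambiguity coming from the choice of lift $\xi$, so the only real work is this filtration-jumping computation of the connecting map.
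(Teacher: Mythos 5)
Your strategy is the same as the paper's: method (3) with $\alpha=\ttwo$ and $z=h_0$, the identical commuting square, and the lift $\xi=h_2\beta\in\Ext^{4,22}(tmf)$ of $\ttwo\nu\kappa$ along the connecting map $p$; your observation that the conclusion is independent of the choice of lift (because $\ker p=\operatorname{im} i$ is closed under $h_0$-multiplication) is a nice point the paper leaves implicit. However, two things go wrong. First, your supporting Ext claim is false: $h_0h_2\neq 0$ in $\Ext_{\cA(2)}(\bF_2,\bF_2)$ --- it detects $2\nu$ in $\pi_3(tmf)\cong\bZ/24$ --- and likewise $h_0h_2d_0\neq 0$; indeed Table A.1 records $h_0h_2d_0$ as the filtration-$6$ class detecting $\ttwo\nu\kappa$, contrary to your assertion that no such class exists. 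Worse, this error is inconsistent with the rest of your own argument: if $h_0h_2=0$ then $h_0\xi=h_0h_2\beta=0$, so $p(h_0\xi)=0$ and the extension you are trying to prove would vanish. The correct reason that $\tau\cdot\ttwo\nu\kappa=0$ is that the classical differential $d_2(h_2\beta)=h_0h_2d_0$ becomes $d_2(h_2\beta)=\tau h_0h_2d_0$ synthetically, so $h_0h_2d_0$ generates a copy of $\bF_2[\tau]/\tau$ on the $E_\infty$-page; this is also exactly why $h_2\beta$ is the $p$-lift of $\ttwo\nu\kappa$.

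Second, and more seriously, you stop at what you yourself call the crux: showing $p(h_0h_2\beta)=\tau\eta B\epsilon\neq 0$. You propose to establish this by unspecified ``bookkeeping,'' but this identification is the entire content of the lemma, and the paper closes it with a short symmetric argument that your writeup is missing: run the same exactness argument on the \emph{target}. The class $h_1w_1c_0$ detecting $\eta B\epsilon$ is the target of a classical $d_3$ (since $d_3(e_0)=w_1c_0$ by Table 3.2), so synthetically it generates $\bF_2[\tau]/\tau^2$; hence $\tau\eta B\epsilon$ is nonzero and is killed by $\tau$, so by exactness it lies in the image of $p\colon\Ext^{5,23}(tmf)\to\pi_{17,24}(tmf)$, and since $h_0h_2\beta$ generates that Ext group, necessarily $p(h_0h_2\beta)=\tau\eta B\epsilon$. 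Commutativity of the square then gives $\ttwo\cdot\ttwo\nu\kappa=p(h_0\cdot h_2\beta)=\tau\eta B\epsilon$. Without this step, and with the erroneous vanishing of $h_0h_2$, your proposal is a correct setup with an unresolved core rather than a proof.
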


\begin{proof}
The elements $\tau\ttwo\nu\kappa$ and $\tau^2\eta \epsilon B$ are both zero so via the $C\tau$-LES, $\ttwo\nu\kappa$ and $\tau\eta\epsilon B$ lift to the nonzero elements $h_2\beta\in\Ext^{4,22}(tmf)$ and $h_0h_2\beta\in\Ext^{5,23}(tmf)$ respectively. These are related by an $h_0$ multiplication. Hence the $C\tau$-LES commutative diagram
\[
\begin{tikzcd}
\Ext^{4,22}(tmf) \ar[r] \ar[d,"h_0"'] & \pi_{17,23}(tmf) \ar[d,"\ttwo"] \\
\Ext^{5,23}(tmf) \ar[r] & \pi_{17,24}(tmf)
\end{tikzcd}
\]
then tells us that $\ttwo^2\nu\kappa=\tau\eta B\epsilon$.
\end{proof}

\begin{lemma}
\label{t.41.10}
There is a hidden $\ttwo$-extension from $\nu_1\kappa\in\pi_{41,51}(tmf)$ to $\tau \eta\epsilon_1 B\in\pi_{41,52}(tmf)$.
\end{lemma}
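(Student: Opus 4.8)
The plan is to imitate the proof of Lemma~\ref{t.17.6} essentially verbatim, using method (3) (the $C\tau$-LES) with $\alpha = \ttwo$, since the two lemmas are structurally identical: both assert a hidden $\ttwo$-extension from a product of the form $(\text{odd element})\cdot\kappa$ into $\tau\eta B\epsilon_{(\cdot)}$, just shifted up by the periodicity operator $w_2$ (note $41 = 17 + 24$ and the relevant $E_\infty$-classes should differ by a factor of $w_2$). First I would identify the $C\tau$-images of the source and target. The key computational input is that $\tau\cdot\nu_1\kappa$ and $\tau^2\cdot\eta B\epsilon_1$ both vanish in $\pi_{*,*}(tmf)$ — this should follow from the $\tau$-torsion structure read off the $E_\infty$-chart in Appendix~A, exactly as the vanishing of $\tau\ttwo\nu\kappa$ and $\tau^2\eta B\epsilon$ was used before. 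Granting this, the boundary maps in the $C\tau$-LES force $\nu_1\kappa$ and $\tau\eta B\epsilon_1$ to lift to nonzero $\Ext$-classes $z_1 \in \Ext^{5,46}(tmf)$ and $z_2 \in \Ext^{6,47}(tmf)$ respectively (the bidegrees being dictated by $(t,w)=(41,51)$ and $(41,52)$ via $\Ext^{w-t,t}$).

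The crux is then a purely algebraic $\Ext$ computation: I must verify that $h_0 \cdot z_1 = z_2$ in $\Ext^{*,*}_{\cA}(H^*(tmf),\bF_2)$. Based on the pattern in Lemma~\ref{t.17.6}, where the classes were $h_2\beta$ and $h_0 h_2\beta$, I expect the analogous classes here to be $h_2\beta w_2$ (or a closely related monomial such as $h_2 w_2 \beta$) and its $h_0$-multiple $h_0 h_2 \beta w_2$, reflecting the $w_2$-translate of the degree-$17$ situation. I would confirm these are the correct detecting classes by matching the bidegrees against Table~\ref{Table 3.3} and the $E_\infty$-chart, and confirm the nontriviality of the $h_0$-multiplication using the known $\Ext_{\cA(2)}(\bF_2,\bF_2)$ module structure (Shimada--Iwai / \cite{BR21}). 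Once $h_0 z_1 = z_2 \neq 0$ is established, the naturality square

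\[
\begin{tikzcd}
\Ext^{5,46}(tmf) \ar[r] \ar[d,"h_0"'] & \pi_{41,51}(tmf) \ar[d,"\ttwo"] \\
\Ext^{6,47}(tmf) \ar[r] & \pi_{41,52}(tmf) \\
\end{tikzcd}
\]

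commutes, and chasing $z_1$ around the diagram yields $\ttwo \cdot \nu_1\kappa = \tau\eta B\epsilon_1$, which is the claim.

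I expect the main obstacle to be the bookkeeping of bidegrees and the precise identification of the detecting $\Ext$-classes, rather than any conceptual difficulty. Two things could go wrong. First, the boundary map in the $C\tau$-LES might not inject the homotopy classes into $\Ext$ on the nose if there is residual $\tau$-divisibility I have overlooked; I would guard against this by carefully checking from the chart that the source and target genuinely sit in the image of the connecting map (equivalently, that they are killed by $\tau$ to the appropriate order and are not themselves $\tau$-multiples). Second, the $h_0$-multiplication in $\Ext_{\cA(2)}$ near this degree could be obstructed by one of the $54$ relations, in which case the naive $w_2$-translate of the degree-$17$ argument would fail and I would instead need to locate the correct pair of $\Ext$-classes related by $h_0$ (or fall back on method (1) or method (2), i.e. the ring structure or pulling back the classical extension via $\tau$-inversion). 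Assuming the $w_2$-periodicity of the relevant $\Ext$-module carries through — which is strongly suggested by the parallel entries in Table~\ref{Table 3.3} — the argument should go through with only the degrees changed.
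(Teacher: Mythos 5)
Your main argument has a genuine gap: the vanishing statements it rests on are false, and the method cannot be repaired. The $C\tau$-LES method requires the source of the extension to lie in the image of the connecting map $\Ext \to \pi_{t,w}$, which by exactness of the LES is precisely $\ker(\tau)$. But $\nu_1\kappa\in\pi_{41,51}(tmf)$ is not $\tau$-torsion: classically $\nu_1\kappa=\eta\kappabar^2\neq 0$ in $\pi_{41}(tmf_2^{\wedge})$ (it even supports a classical hidden $\eta$-extension to $B\kappa\kappabar$; see Table A.2), and any synthetic class with nonzero $\tau$-localization satisfies $\tau^k x\neq 0$ for all $k$. Equivalently, its detecting class $\alpha d_0\beta$ survives the classical Adams spectral sequence (it is not the target of any classical differential), so by the algorithm in Section 2 it generates a free $\bF_2[\tau]$-summand of the synthetic $E_\infty$-page. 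Hence $\tau\nu_1\kappa\neq 0$, the element $\nu_1\kappa$ admits no $\Ext$-lift, and there is no diagram to chase. This is exactly the structural difference between this lemma and Lemma~\ref{t.17.6}, where the source is $\ttwo\nu\kappa$ --- a $\ttwo$-multiple that is classically \emph{zero}, i.e.\ genuinely $\tau$-torsion (its detecting class $h_0h_2d_0$ is killed by the classical $d_2$ on $h_2\beta$) --- which is precisely why method (3) applies there. You did flag ``killed by $\tau$ to the appropriate order'' as a check to perform, but the check fails here, and your fallback to methods (1)/(2) is left with no content.

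Two secondary errors point the same way. The claimed $w_2$-periodicity is wrong: $w_2$ has stem $48$, not $24$ (Table 3.1), so the true $w_2$-translate of Lemma~\ref{t.17.6} is Lemma~\ref{t.65.14} ($65=17+48$), which indeed uses the classes $h_2w_2\beta$ and $h_0h_2w_2\beta$; the monomial $h_2\beta w_2$ lives in stem $66$, nowhere near stem $42$. Your $\Ext$ bidegrees are also off: for $(t,w)=(41,51)$ and $(41,52)$ the relevant groups in the paper's $C\tau$-LES are $\Ext^{8,50}(tmf)$ and $\Ext^{9,51}(tmf)$ (stem $42$), not $\Ext^{5,46}$ and $\Ext^{6,47}$. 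The paper's actual proof takes a different route entirely, combining methods (1) and (2): the non-hidden $E_\infty$-page relation $\ttwo\nu_1=\eta^2\eta_1$ together with the classical hidden $\eta$-extension $\eta\eta_1\kappa=\tau B\epsilon_1$ gives $\ttwo\nu_1\kappa=\eta(\eta\eta_1\kappa)=\tau\eta B\epsilon_1$. That is the argument your proposal would need to supply.
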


\begin{proof}
By looking at the $\E_{\infty}$-page, we see that there is a relation $\ttwo\nu_1=\eta^2\eta_1$. Now there is a classical hidden $\eta$-extension $\eta\eta_1\kappa=\tau\epsilon_1 B$. Putting these two facts together, we see that $\ttwo\nu_1\kappa=\eta^2\eta_1\kappa=\tau\eta\epsilon_1 B$.
\end{proof}

\begin{lemma}
\label{t.47.10}
There is a hidden $\ttwo$-extension from $\nu_1\kappabar\in\pi_{47,57}(tmf)$ to $\tau^2\eta_1\kappa B\in\pi_{47,58}(tmf)$.
\end{lemma}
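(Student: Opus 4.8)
The plan is to run the $C\tau$-LES (method (3)) with $\alpha=\ttwo$, so that $z=i(\ttwo)=h_0\in\Ext^{1,1}$, exactly as in the proof of Lemma~\ref{t.17.6}. Specializing the $C\tau$-LES square to $(t,w)=(47,57)$ yields
\[
\begin{tikzcd}
\Ext^{8,56}(tmf)\ar[r,"p"]\ar[d,"h_0"'] & \pi_{47,57}(tmf)\ar[d,"\ttwo"]\\
\Ext^{9,57}(tmf)\ar[r,"p"] & \pi_{47,58}(tmf)
\end{tikzcd}
\]
It therefore suffices to show that $\nu_1\kappabar$ lifts along $p$ to $w_2\in\Ext^{8,56}(tmf)$ and that $\tau^2 B\eta_1\kappa$ lifts along $p$ to $h_0w_2\in\Ext^{9,57}(tmf)$. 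Since $h_0\cdot w_2=h_0w_2$ is automatic, commutativity of the square then forces $\ttwo\nu_1\kappabar=p(h_0w_2)=\tau^2B\eta_1\kappa$.

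Both lifts are read off from the classical $tmf$-differentials. From \hyperref[Table 3.2]{Table~3.2}, $d_2(w_2)=\alpha\beta g$, which synthetically becomes $d_2(w_2)=\tau\alpha\beta g$; thus $\alpha\beta g$ is $\tau$-torsion and detects the product $\nu_1\kappabar$ of the detecting classes $\alpha\beta$ (of $\nu_1$) and $g$ (of $\kappabar$), giving $\tau\nu_1\kappabar=0$ and $p(w_2)=\nu_1\kappabar$. For the target I would use the Bruner--Rognes differential $d_4(h_0w_2)\doteq w_1\gamma d_0$, in degrees $\Ext^{9,57}\to\Ext^{13,60}$; synthetically this reads $d_4(h_0w_2)=\tau^3 w_1\gamma d_0$, so $w_1\gamma d_0$ (the product of the detecting classes $w_1,\gamma,d_0$ of $B,\eta_1,\kappa$) is $\tau^3$-torsion. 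By the same bookkeeping as in Lemma~\ref{t.17.6}---a class killed by a classical $d_r$ lifts along $p$ from its $d_r$-source and is detected by the top nonzero multiple $\tau^{r-2}$ of its truncated $\tau$-tower---the class $\tau^2 w_1\gamma d_0$ detects $\tau^2B\eta_1\kappa$ and $p(h_0w_2)=\tau^2B\eta_1\kappa$.

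The main obstacle is the target lift, namely pinning down $d_4(h_0w_2)\doteq w_1\gamma d_0$. For $h_0w_2$ to survive to $E_4$ it must survive $d_2$ and $d_3$; in particular one needs the $\cA(2)$-$\Ext$ relation $h_0\alpha\beta g=0$, since otherwise $d_2(h_0w_2)=h_0\alpha\beta g\neq0$ would kill $h_0w_2$ at $E_2$ and detect $\ttwo\nu_1\kappabar$ already in filtration $11$, contradicting hiddenness. This vanishing, the exact form of $d_4(h_0w_2)$, and the one-dimensionality of the $\Ext$ groups in play (so that the $p$-lifts carry no indeterminacy and no crossing differentials intervene) are the inputs I would extract from \cite{BR21}. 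As a cross-check one can instead invoke the relation $\ttwo\nu_1=\eta^2\eta_1$ on the $E_\infty$-page, as in Lemma~\ref{t.41.10}, reducing the claim to the hidden $\eta$-extension $\eta^2\eta_1\kappabar=\tau^2B\eta_1\kappa$; matching the two routes is a useful sanity check on the $\tau$-power in the target.
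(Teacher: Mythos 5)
Your proof is correct and is essentially the paper's own argument: the $C\tau$-LES with $z=h_0$, lifting $\nu_1\kappabar$ to $w_2\in\Ext^{8,56}(tmf)$ and $\tau^2B\eta_1\kappa$ to $h_0w_2\in\Ext^{9,57}(tmf)$, and concluding by $h_0$-multiplication on the lifts. The additional material you supply --- the classical differentials $d_2(w_2)=\alpha\beta g$ and $d_4(h_0w_2)=w_1\gamma d_0$ pinning down the $\tau$-torsion orders and the boundary-map identifications, together with the cross-check via $\ttwo\nu_1=\eta^2\eta_1$ --- simply makes explicit the inputs that the paper's terse proof leaves implicit.
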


\begin{proof}
The elements $\tau\nu_1\kappabar$ and $\tau^3\eta_1\kappa B$ are both zero so via the $C\tau$-LES, $\nu_1\kappabar$ and $\tau^2\eta_1\kappa B$ lift to the nonzero elements $w_2\in\Ext^{8,56}(tmf)$ and $h_0w_2\in\Ext^{9,57}(tmf)$ respectively. These are related by an $h_0$ multiplication.
\end{proof}

\begin{lemma}
\label{t.54.12}
There is a hidden $\ttwo$-extension from $\kappa\kappabar^2\in\pi_{54,66}(tmf)$ to $\tau^2\epsilon_1\kappa B\in\pi_{54,67}(tmf)$.
\end{lemma}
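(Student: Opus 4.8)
The plan is to run the $C\tau$-LES (method (3)) with $\alpha=\ttwo$ and $z=i(\ttwo)=h_0\in\Ext^{1,1}$, exactly as in Lemmas~\ref{t.17.6} and~\ref{t.47.10}. First I would read off from the $E_\infty$-chart of Appendix~A that $\tau\,\kappa\kappabar^2=0$ and $\tau^3 B\epsilon_1\kappa=0$ (the former in particular reflecting the classical vanishing $\kappa\bar\kappa^2=0$). Then both $\kappa\kappabar^2\in\pi_{54,66}(tmf)$ and $\tau^2 B\epsilon_1\kappa\in\pi_{54,67}(tmf)$ are annihilated by a single $\tau$, so by exactness of the $C\tau$-LES each lifts along the boundary map $p$ to a nonzero class in $\Ext^{*,*}(tmf)$: the source lifts to some $A\in\Ext^{10,65}(tmf)$ and the target lifts to some $A'\in\Ext^{11,66}(tmf)$, where the bidegrees come from the $\Ext^{w-t-2,w-1}$ indexing of the $C\tau$-LES.

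The second step is to identify $A$ and $A'$ and to check $h_0A=A'$. Both classes live in stem $55$; using Bruner–Rognes's description of $\Ext^{*,*}(tmf)$ as a module over $\Ext_{\cA(2)}(\bF_2,\bF_2)$, I would pin them down by a degree count (one expects $A'=\beta g^2\in\Ext^{11,66}(tmf)$ and $A$ the class in $\Ext^{10,65}(tmf)$ with $h_0A=\beta g^2$) and then read off the $h_0$-multiplication directly from the relations recorded in \cite{BR21}. Once $h_0A=A'$ is established, commutativity of the $C\tau$-LES square
\[
\begin{tikzcd}
\Ext^{10,65}(tmf) \ar[r] \ar[d,"h_0"'] & \pi_{54,66}(tmf) \ar[d,"\ttwo"] \\
\Ext^{11,66}(tmf) \ar[r] & \pi_{54,67}(tmf)
\end{tikzcd}
\]
forces $\ttwo\cdot\kappa\kappabar^2=\tau^2 B\epsilon_1\kappa$, since the horizontal maps send $A\mapsto\kappa\kappabar^2$ and $A'\mapsto\tau^2 B\epsilon_1\kappa$.

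The hard part will be this middle step: the bidegrees $(10,65)$ and $(11,66)$ need not be one-dimensional, so I must control the indeterminacy both in the choice of lift (governed by the kernel and cokernel of $\tau$ in the adjacent degrees) and in the $h_0$-action, using the module structure of \cite{BR21}. A useful cross-check, and possibly a cleaner route, is method (1): if the $\ttwo$-extension $\ttwo\kappabar^2=\tau^2 B\epsilon_1$ on $\kappabar^2\in\pi_{40,48}(tmf)$ is available, then multiplying by $\kappa$ in the commutative ring $\pi_{*,*}(tmf)$ gives $\ttwo\cdot\kappa\kappabar^2=\kappa\cdot\ttwo\kappabar^2=\tau^2 B\epsilon_1\kappa$ immediately, provided one verifies that $\tau^2 B\epsilon_1\kappa$ is nonzero and that no lower-filtration correction terms survive multiplication by $\kappa$.
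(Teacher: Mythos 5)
Your primary route through the $C\tau$-LES cannot work, because its hypotheses fail for these particular elements. The method requires $\tau\kappa\kappabar^2=0$, so that the source lifts along the boundary map from $\Ext$; but $\kappa\kappabar^2$ is not $\tau$-torsion. It is detected by $d_0g^2$, a permanent cycle that is not hit by any classical differential (it detects the \emph{nonzero} classical class $\kappa\kappabar^2\in\pi_{54}(tmf_2^{\wedge})$, cf.\ \cite{BR21}), so by the algorithm of Section 2 it generates a free $\bF_2[\tau]$ summand of the $E_\infty$-page --- a black dot in the charts, not a colored one --- and hence $\tau\kappa\kappabar^2\neq 0$. Your parenthetical claim of a ``classical vanishing $\kappa\kappabar^2=0$'' is false; indeed the extension you are proving carries a $\tau^2$ precisely because it $\tau$-localizes to the nonzero classical hidden $2$-extension $2\kappa\kappabar^2=B\epsilon_1\kappa$, which would be vacuous if $\kappa\kappabar^2$ died classically. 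The second hypothesis fails for the same reason: $B\epsilon_1\kappa$ is detected by the permanent cycle $w_1d_0(\delta+\alpha g)$, again a free $\bF_2[\tau]$ generator, so $\tau^3B\epsilon_1\kappa\neq 0$. Since neither class is in the image of the boundary map, the commutative square in your second step does not exist, and the identification of lifts and $h_0$-multiplications is moot. This is exactly why the paper reserves the $C\tau$-LES for $\tau$-torsion classes: every instance of its use (e.g.\ Lemmas~\ref{t.17.6} and~\ref{t.47.10}) begins by verifying the two $\tau$-vanishing statements.

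Your fallback, which you relegated to a cross-check, is the paper's actual proof and should be the entire argument. The classical hidden $2$-extension $2\kappabar^2=B\epsilon_1$ of \cite{BR21} pulls back to the synthetic identity $\ttwo\kappabar^2=\tau^2B\epsilon_1$ in $\pi_{40,49}(tmf)$ (Table A.1, $t=40$, marked classical). This is an equality of homotopy elements, not merely a statement about detection at $E_\infty$, so multiplying by $\kappa$ in the commutative ring $\pi_{*,*}(tmf)$ yields $\ttwo\kappa\kappabar^2=\tau^2B\epsilon_1\kappa$ exactly, with no possible lower-filtration correction terms; that worry is unfounded. The only remaining check --- which the paper does make --- is that $\tau^2B\epsilon_1\kappa\neq 0$ in $\pi_{54,67}(tmf)$, immediate from the chart since $B\epsilon_1\kappa$ is detected by a free $\bF_2[\tau]$ generator (equivalently, since $B\epsilon_1\kappa$ is nonzero classically).
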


\begin{proof}
Note that there is a classical hidden $\ttwo$-extension $\ttwo\kappabar^2=\tau^2\epsilon_1 B$. Hence $\ttwo\kappa\kappabar^2=\tau^2\epsilon_1\kappa B$, which is nonzero.
\end{proof}

\begin{lemma}
\label{t.61.14}
There is a hidden $\ttwo$-extension from $\nu_1\kappa\kappabar\in\pi_{61,75}(tmf)$ to \linebreak $\tau^2\eta_1\kappabar B^2\in\pi_{61,76}(tmf)$.
\end{lemma}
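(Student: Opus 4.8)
The plan is to deduce this $\ttwo$-extension from the one already established in Lemma~\ref{t.47.10} by multiplying by $\kappa$, and then to rewrite the resulting target via a product relation in $\pi_{*,*}(tmf)$. Lemma~\ref{t.47.10} gives the hidden $\ttwo$-extension $\ttwo\,\nu_1\kappabar=\tau^2 B\eta_1\kappa$ in $\pi_{47,58}(tmf)$. Since $\pi_{*,*}(\nu_{H\bF_2}tmf_2^{\wedge})$ is a bigraded commutative ring, multiplying both sides by $\kappa\in\pi_{14,18}(tmf)$ is legitimate and yields
\[
\ttwo\,\nu_1\kappa\kappabar=\tau^2 B\eta_1\kappa^2\in\pi_{61,76}(tmf),
\]
so the lemma reduces to identifying $\tau^2 B\eta_1\kappa^2$ with the claimed target $\tau^2 B^2\eta_1\kappabar$.

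For this identification I would first note that $\kappa^2$ and $B\kappabar$ both lie in $\pi_{28,36}(tmf)$, detected in the same Ext bidegree by $d_0^2$ and $w_1 g$ in $\Ext^{8,36}(tmf)$ respectively. The required input is then the relation $\kappa^2=B\kappabar$, which I expect to come directly from the relation $d_0^2=w_1 g$ in $\Ext_{\cA(2)}(\bF_2,\bF_2)$ (so that no correcting power of $\tau$ intervenes). Granting this, substitution gives
\[
\ttwo\,\nu_1\kappa\kappabar=\tau^2 B\eta_1\,(B\kappabar)=\tau^2 B^2\eta_1\kappabar,
\]
which is the asserted extension. It then remains to confirm the target is nonzero, i.e.\ that $\tau^2 B^2\eta_1\kappabar\neq 0$, by locating its detecting class (namely $w_1^2\gamma g$, up to filtration) on the $E_\infty$-chart of Appendix~A and checking it carries a long enough $\tau$-tower to survive multiplication by $\tau^2$.

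The hard part will be pinning down the precise relationship between $\kappa^2$ and $B\kappabar$ synthetically: although $d_0^2$ and $w_1 g$ occupy the same Ext bidegree, I must confirm both survive to $E_\infty$ nonzero and that the classical identity $\kappa^2=B\kappabar$ conceals no filtration jump (a hidden extension would introduce a power of $\tau$ and alter the target). I would settle this against the Ext relations recorded in \cite[Table 3.4]{BR21} together with the $E_\infty$-chart of Appendix~A.

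As a cross-check, and an alternative that avoids the relation $\kappa^2=B\kappabar$, I would instead begin from Lemma~\ref{t.41.10}, $\ttwo\,\nu_1\kappa=\tau\eta B\epsilon_1$, multiply by $\kappabar$ to obtain $\ttwo\,\nu_1\kappa\kappabar=\tau\eta B\epsilon_1\kappabar$, and then apply the $\eta$-extension $\eta\epsilon_1=\tau B\eta_1$ to land on the same answer $\tau^2 B^2\eta_1\kappabar$; the two routes are consistent precisely when both relations hold. Should the multiplicative approach stall, the $C\tau$-LES of method~(3) offers a fallback: since $d_2(w_2 d_0)=\alpha\beta d_0 g$ classically, the source $\nu_1\kappa\kappabar$ is $\tau$-torsion, so both it and the target lift to $\Ext^{12,74}(tmf)$ and $\Ext^{13,75}(tmf)$, and I would aim to exhibit these lifts as related by an $h_0$-multiplication, exactly as in the proofs of Lemma~\ref{t.17.6} and Lemma~\ref{t.47.10}.
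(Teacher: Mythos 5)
Your primary argument takes a genuinely different route from the paper's. The paper proves this lemma by its method (3), the $C\tau$-LES: since $\tau\nu_1\kappa\kappabar=0$ and $\tau^3B^2\eta_1\kappabar=0$, both classes are in the image of the connecting map, with $\Ext$-preimages $w_2d_0$ and $h_0w_2d_0$ related by $h_0$-multiplication. Your fallback paragraph is precisely this argument (and your bidegrees $\Ext^{12,74}(tmf)$, $\Ext^{13,75}(tmf)$ are the correct ones; the paper's proof prints $\Ext^{8,56}$ and $\Ext^{9,57}$, carried over verbatim from Lemma~\ref{t.47.10}). Your main route instead multiplies Lemma~\ref{t.47.10} by $\kappa$ and rewrites the target via $\kappa^2=B\kappabar$. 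The strategy itself is legitimate---the paper proves Lemmas~\ref{t.54.12} and~\ref{t.74.16} by exactly this kind of multiplication---and the relation you need is true: $d_0^2=w_1g$ does hold in $\Ext_{\cA(2)}(\bF_2,\bF_2)$ (these are the only two monomials in that bidegree, $w_1g\neq 0$ since it detects $\nu\eta_1$, no differentials touch the bidegree, and $\pi_{28}(tmf_2^{\wedge})\cong\bZ\oplus\bZ/2$ forces the identification; it is also visible in the May spectral sequence, where $d_0$ is represented by $b_{20}b_{21}$ plus terms whose squares vanish). The synthetic relation then holds with no $\tau$-correction, since both sides are detected by the same $\tau$-free $E_\infty$-class and the chart leaves no room in higher filtration for a $\tau$-power-torsion difference. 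So your route closes; but be aware that this verification, which you correctly flag as the hard part and defer, is the entire mathematical content of your proof, so as written the proposal asserts rather than establishes the key step. The trade-off: the paper's $C\tau$-LES argument consumes only algebraic $\Ext$ data plus $\tau$-torsion bookkeeping and applies uniformly, while yours recycles earlier homotopy-level extensions and ring relations and is shorter whenever the needed relation is already certified.

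Two corrections to your cross-check. The relation $\eta\epsilon_1=\tau B\eta_1$ you invoke is not available: \hyperref[Table A.2]{Table~A.2} records $\eta B_1=\tau B\eta_1$ for $B_1$ (detected by $\alpha g$), not for $\epsilon_1$ (detected by $\delta+\alpha g$), and by the asserted completeness of the tables $\epsilon_1$ supports no hidden $\eta$-extension. What rescues that route is the classical hidden extension on $\epsilon_1\kappabar$ (\hyperref[Table A.2]{Table~A.2} at $t=52$, whose printed target $\tau B\eta_1\kappa$ must read $\tau B\eta_1\kappabar$ on degree grounds): multiplying Lemma~\ref{t.41.10} by $\kappabar$ gives $\ttwo\nu_1\kappa\kappabar=\tau\eta B\epsilon_1\kappabar=\tau B\cdot\tau B\eta_1\kappabar=\tau^2B^2\eta_1\kappabar$. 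Note that this repaired version is itself a complete proof of the lemma, needing neither $\kappa^2=B\kappabar$ nor the $C\tau$-LES.
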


\begin{proof}
The elements $\tau\nu_1\kappa\kappabar$ and $\tau^3\eta_1\kappabar B^2$ are both zero so via the $C\tau$-LES, $\nu_1\kappa\kappabar$ and $\tau^2\eta_1\kappabar B^2$ lift to the nonzero elements $d_0w_2\in\Ext^{8,56}(tmf)$ and $h_0d_0w_2\in\Ext^{9,57}(tmf)$ respectively. These are related by an $h_0$ multiplication.
\end{proof}

\begin{lemma}
\label{t.65.14}
There is a hidden $\ttwo$-extension from $\ttwo\nu_2\kappa\in\pi_{65,79}(tmf)$ to $\tau\eta_1\epsilon_1 B\in\pi_{65,80}(tmf)$.
\end{lemma}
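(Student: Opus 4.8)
The plan is to run method (3), the $C\tau$-LES, in a manner directly parallel to Lemma~\ref{t.17.6}: the source and target here sit in bidegrees obtained from those of Lemma~\ref{t.17.6} by adding $(48,56)$, the stem and internal degree of $w_2$, so I expect the relevant $\Ext$ classes to be the $w_2$-multiples of the ones appearing there.

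First I would verify the two vanishing statements $\tau\ttwo\nu_2\kappa=0$ in $\pi_{65,78}(tmf)$ and $\tau^2 B\eta_1\epsilon_1=0$ in $\pi_{65,79}(tmf)$. Both follow from the $\tau$-torsion bookkeeping on the $E_\infty$-page recorded in Section~\ref{Section 4}, since the power of $\tau$ annihilating a class is governed by the length of the corresponding classical Adams differential. Exactness of the $C\tau$-LES at $\pi_{65,79}(tmf)$ and at $\pi_{65,80}(tmf)$ then forces $\ttwo\nu_2\kappa$ and $\tau B\eta_1\epsilon_1$ to lie in the images of the connecting maps out of $\Ext^{12,78}(tmf)$ and $\Ext^{13,79}(tmf)$, respectively.

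Next I would identify these preimages as $h_2\beta w_2\in\Ext^{12,78}(tmf)$ and $h_0h_2\beta w_2\in\Ext^{13,79}(tmf)$, which are nonzero in the $\cA(2)$-Ext algebra computed in \cite{BR21}. Since $h_0\cdot(h_2\beta w_2)=h_0h_2\beta w_2$, the two lifts are related by an $h_0$-multiplication, and the commuting square of the $C\tau$-LES
\[
\begin{tikzcd}
\Ext^{12,78}(tmf) \ar[r] \ar[d,"h_0"'] & \pi_{65,79}(tmf) \ar[d,"\ttwo"] \\
\Ext^{13,79}(tmf) \ar[r] & \pi_{65,80}(tmf) \\
\end{tikzcd}
\]
delivers the claimed extension $\ttwo^2\nu_2\kappa=\tau B\eta_1\epsilon_1$.

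The step I expect to be the main obstacle is the identification of the preimages in the previous paragraph: I must confirm against the $\Ext_{\cA(2)}(\bF_2,\bF_2)$ computation of \cite{BR21} both that $h_2\beta w_2$ and $h_0 h_2\beta w_2$ are nonzero and that they are precisely the classes mapping to $\ttwo\nu_2\kappa$ and $\tau B\eta_1\epsilon_1$ under the connecting homomorphism, ruling out any relation in the $w_2$-tower that would make a lift vanish or shift its filtration. Once the multiplicative relation $h_0\cdot h_2\beta w_2=h_0h_2\beta w_2\neq 0$ is confirmed, naturality of the $C\tau$-LES completes the argument with no further homotopy-theoretic input.
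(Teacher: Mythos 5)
Your proposal is correct and follows exactly the paper's argument: both verify the $\tau$-torsion vanishing conditions, lift $\ttwo\nu_2\kappa$ and $\tau B\eta_1\epsilon_1$ through the $C\tau$-LES to the nonzero classes $h_2w_2\beta\in\Ext^{12,78}(tmf)$ and $h_0h_2w_2\beta\in\Ext^{13,79}(tmf)$, and conclude from the $h_0$-multiplication relating them. Your framing of these classes as $w_2$-multiples of the ones in Lemma~\ref{t.17.6} is a nice observation, but it is the same proof.
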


\begin{proof}
The elements $\tau\ttwo\nu_2\kappa$ and $\tau^2\eta_1\epsilon_1B$ are both zero so via the $C\tau$-LES, $\ttwo\nu_2\kappa$ and $\tau\eta_1\epsilon_1 B$ lift to the nonzero elements $h_2\beta w_2\in\Ext^{12,78}(tmf)$ and $h_0h_2\beta w_2\in\Ext^{13,79}(tmf)$ respectively. These are related by an $h_0$ multiplication.
\end{proof}

\begin{lemma}
\label{t.67.14}
There is a hidden $\ttwo$-extension from $\nu_1\kappabar^2\in\pi_{67,81}(tmf)$ to $\tau^2\nu_2B^2\in\pi_{67,82}(tmf)$.
\end{lemma}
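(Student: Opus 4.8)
The plan is to apply method~(3), the $C\tau$-LES, in the same way as in Lemmas~\ref{t.47.10} and~\ref{t.61.14}: both the source and the target are $\tau$-power torsion, and $\ttwo$ has nonzero $C\tau$-image $h_0$, so the extension can be read off from a single $h_0$-multiplication in $\Ext^{*,*}(tmf)$.

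First I would record the relevant $\tau$-torsion. From Lemma~\ref{t.47.10} we already have $\tau\nu_1\kappabar = 0$, so multiplying by $\kappabar$ gives $\tau\nu_1\kappabar^2 = 0$ at once. On the target side I would check from the classical differentials of \hyperref[Section 3]{Section 3} that $B^2\nu_2$ is $\tau$-power torsion with $\tau^3 B^2\nu_2 = 0$, so that $\tau\cdot(\tau^2 B^2\nu_2)=0$. Granting these, exactness in the $C\tau$-LES yields nonzero lifts of $\nu_1\kappabar^2$ into $\Ext^{12,80}(tmf)$ and of $\tau^2 B^2\nu_2$ into $\Ext^{13,81}(tmf)$, these being the bidegrees $(w-t-2,w-1)$ for $(t,w)=(67,81)$ and $(67,82)$ respectively.

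Next I would identify the lifts. Since the connecting map of the $C\tau$-LES is a map of $\Ext^{*,*}_{\cA}(\bF_2,\bF_2)$-modules, and since $\nu_1\kappabar$ lifts to $w_2$ by Lemma~\ref{t.47.10}, multiplying the homotopy class by $\kappabar$ (detected by $g$) should carry $\nu_1\kappabar^2$ to the lift $w_2 g\in\Ext^{12,80}(tmf)$ and $\tau^2 B^2\nu_2$ to $h_0 w_2 g\in\Ext^{13,81}(tmf)$. As $w_2 g$ and $h_0 w_2 g$ differ exactly by $h_0=i(\ttwo)$, the commuting square
\[
\begin{tikzcd}
\Ext^{12,80}(tmf) \ar[r] \ar[d,"h_0"'] & \pi_{67,81}(tmf) \ar[d,"\ttwo"] \\
\Ext^{13,81}(tmf) \ar[r] & \pi_{67,82}(tmf) \\
\end{tikzcd}
\]
then forces $\ttwo\cdot\nu_1\kappabar^2 = \tau^2 B^2\nu_2$.

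The main obstacle is the $\Ext$ computation feeding the middle step: I must confirm that $w_2 g$ and $h_0 w_2 g$ really are the two lifts and, most importantly, that $h_0 w_2 g\neq 0$ in $\Ext_{\cA(2)}(\bF_2,\bF_2)$, since a vanishing target lift would leave the square uninformative. This is exactly the point where I rely on the Bruner--Rognes description of $\Ext_{\cA(2)}$ (or the machine resolution); multiplicativity of the connecting map over $\Ext^{*,*}_{\cA}(\bF_2,\bF_2)$ is what lets me bootstrap from the already-established lift $\nu_1\kappabar\leftrightarrow w_2$ rather than recompute the identification from scratch.
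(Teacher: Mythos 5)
Your proposal is correct and is essentially the paper's own proof: the paper likewise notes that $\tau\nu_1\kappabar^2$ and $\tau^3B^2\nu_2$ vanish, lifts the two classes through the $C\tau$-LES to $w_2g\in\Ext^{12,80}(tmf)$ and $h_0w_2g\in\Ext^{13,81}(tmf)$, and concludes from the $h_0$-multiplication. One step worth making explicit: your $\kappabar$-multiplication bootstrap from Lemma~\ref{t.47.10} actually identifies the image of $h_0w_2g$ in homotopy with $\kappabar\cdot\tau^2B\eta_1\kappa=\tau^2B\eta_1\kappa\kappabar$, so landing on the stated target implicitly uses the relation $\tau^2B\eta_1\kappa\kappabar=\tau^2B^2\nu_2$ --- which does hold (for instance, it follows by comparing Lemma~\ref{n.64.14} with the classical $\nu$-extension $\nu\eta_1^2=\tau^2B\eta_1\kappabar$), but the paper sidesteps this by asserting the lifts outright.
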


\begin{proof}
The elements $\tau\nu_1\kappabar^2$ and $\tau^3\nu_2B^2$ are both zero so via the $C\tau$-LES, $\nu_1\kappabar^2$ and $\tau^2\nu_2B^2$ lift to the elements $gw_2\in\Ext^{12,80}(tmf)$ and $h_0gw_2\in\Ext^{13,81}(tmf)$ respectively. These are related by an $h_0$ multiplication.
\end{proof}

\begin{lemma}
\label{t.68.14}
There is a hidden $\ttwo$-extension from $ D_2\kappabar\in\pi_{68,82}(tmf)$ to $\tau \kappabar^3B\in\pi_{68,83}(tmf)$.
\end{lemma}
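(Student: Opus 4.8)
The plan is to use the $C\tau$-long exact sequence (method (3)), just as in the preceding $\ttwo$-extension lemmas. The source $D_2\kappabar$ lives in $\pi_{68,82}(tmf)$ and the proposed target $\tau B\kappabar^3$ in $\pi_{68,83}(tmf)$; since $\ttwo\in\pi_{0,1}(\bS^{0,0})$ has nonzero $C\tau$-image $h_0\in\Ext^{1,1}$, a hidden $\ttwo$-extension between these two classes should be detected on the $\Ext$-level by multiplication by $h_0$.

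First I would verify the two $\tau$-torsion facts $\tau\cdot D_2\kappabar=0$ and $\tau^2\cdot B\kappabar^3=0$ by reading them off the synthetic $E_\infty$-chart: by Theorem 2.8 the $\tau$-order of each class is governed by the length of the classical Adams differential that kills its detecting class, so $D_2\kappabar$ should be $\tau$-torsion of order one and $B\kappabar^3$ of order two. Since then $\tau\cdot D_2\kappabar=0$ and $\tau\cdot(\tau B\kappabar^3)=0$, both classes lie in the image of the connecting map $p$, and hence lift along the $C\tau$-LES to $\Ext$-classes in $\Ext^{12,81}(tmf)$ and $\Ext^{13,82}(tmf)$ respectively (applying the formula $\Ext^{w-t-2,w-1}$ for $\pi_{t,w}$).

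The decisive step is to pin down these two lifts explicitly in the $\Ext_{\cA(2)}$-chart and to check that the lift of $\tau B\kappabar^3$ is exactly $h_0$ times the lift of $D_2\kappabar$. Granting this, the commuting $C\tau$-LES square
\[
\begin{tikzcd}
\Ext^{12,81}(tmf) \ar[r] \ar[d,"h_0"'] & \pi_{68,82}(tmf) \ar[d,"\ttwo"] \\
\Ext^{13,82}(tmf) \ar[r] & \pi_{68,83}(tmf) \\
\end{tikzcd}
\]
forces $\ttwo\cdot D_2\kappabar=\tau B\kappabar^3$, and the nonvanishing of the $h_0$-product simultaneously confirms the target is nonzero.

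The main obstacle is exactly this $\Ext$-bookkeeping. Both lifts sit in stem $69$ in filtrations $12$ and $13$, a region where the generators and relations of $\Ext_{\cA(2)}$ interact nontrivially, so I would lean on the Bruner--Rognes charts to identify the source-lift, to confirm that it really maps to $D_2\kappabar$ under $p$ (equivalently, that it is the $d_2$-source of the class detecting $D_2\kappabar$), and to verify that $h_0$ does not annihilate it. If the $h_0$-multiplication were to vanish on the nose, I would instead deduce the extension from the ring structure (method (1)), for instance by establishing a lower-stem hidden $\ttwo$-extension on $D_2$ and multiplying by $\kappabar$, or by a Toda-bracket argument in the spirit of Lemmas \ref{e.60.14} and \ref{e.72.15.1}.
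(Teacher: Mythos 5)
Your proposal has a genuine gap, and it occurs at the very first step: the claim that $\tau\cdot D_2\kappabar=0$. This is false. The class $D_2\kappabar=\nu\nu_2\kappa$ is \emph{nonzero classically} (it is detected by $h_0^2w_2g=h_2^2w_2d_0$, a permanent cycle in stem $68$, filtration $14$, which cannot be hit by any differential because stem $69$ of $\Ext_{\cA(2)}$ is empty in filtrations $\leq 12$; see \cite{BR21}). Consequently, in the synthetic $E_\infty$-page $h_0^2w_2g$ generates a free $\bF_2[\tau]$-summand (a black dot in the charts, not a red one), so $D_2\kappabar$ is not $\tau$-torsion. Since the image of the map $\Ext^{w-t-2,w-1}(tmf)\to\pi_{t,w}(tmf)$ in the $C\tau$-LES is exactly the kernel of $\tau$, the element $D_2\kappabar$ admits no $\Ext$-lift at all, and the commuting square you propose cannot be formed. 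What makes this extension purely synthetic is not that the \emph{source} is $\tau$-torsion but that the \emph{target} is: $w_1g^3$ is hit by a classical $d_3$, so classically $B\kappabar^3=0$ and $2D_2\kappabar=0$, while synthetically $\ttwo D_2\kappabar=\tau B\kappabar^3\neq 0$. Your secondary check would also fail: the lift of $\tau B\kappabar^3$ is $h_2^2w_2\beta$ (this is how it appears in Lemma~\ref{n.65.14}), which is not divisible by $h_0$ (indeed $h_0h_2^2=0$, and there is nothing in stem $69$, filtration $12$ to multiply by $h_0$), so no $h_0$-square can detect this extension.

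The paper's actual argument is a method-(1) argument that routes around this: using the relation $D_2\kappabar=\nu\nu_2\kappa$, one has $\ttwo D_2\kappabar=\nu\cdot(\ttwo\nu_2\kappa)$, and the element $\ttwo\nu_2\kappa\in\pi_{65,79}(tmf)$ \emph{is} $\tau$-torsion, so the $C\tau$-LES applies to it with an $h_2$-multiplication (lifts $h_2w_2\beta$ and $h_2^2w_2\beta$), yielding $\nu\cdot(\ttwo\nu_2\kappa)=\tau B\kappabar^3$ as in Lemma~\ref{n.65.14}. Your fallback paragraph gestures at method (1), but the specific decomposition you suggest (a hidden $\ttwo$-extension on $D_2$, multiplied by $\kappabar$) cannot work: $\ttwo D_2$ is not hidden at all, being detected by $h_0^3w_2$ in the $h_0$-tower on $w_2$, and the product $\{h_0^3w_2\}\cdot\kappabar$ lands in a bidegree where the obvious detecting class $h_0^3w_2g$ vanishes (since $h_0^3g=h_0h_2^2d_0=0$), so one is back to a hidden product computation. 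The essential missing ingredient in your proposal is the rewriting $D_2\kappabar=\nu\nu_2\kappa$, which converts the $\ttwo$-extension on a $\tau$-free class into a $\nu$-extension on a $\tau$-torsion class, where the $C\tau$-LES machinery actually applies.
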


\begin{proof}
This follows from a hidden $\nu$-extension which we prove later in Lemma~\ref{n.64.14}. Note that $D_2\kappabar  = \nu\nu_2\kappa$ and $\ttwo\nu\nu_2\kappa = \tau\kappabar^3 B$ via Lemma~\ref{n.64.14}. Hence $\ttwo D_2\kappabar  = \ttwo\nu\nu_2\kappa = \tau D_2\kappabar B$.
\end{proof}

\begin{lemma}
\label{t.113.22}
There is a hidden $\ttwo$-extension from $\ttwo\nu\kappa_4\in\pi_{113,135}(tmf)$ to $\tau\eta\epsilon_4 B\linebreak\in\pi_{113,136}(tmf)$.
\end{lemma}

\begin{proof}
The elements $\tau\ttwo\nu\kappa_4$ and $\tau^2\eta\epsilon_4 B$ are both zero so via the $C\tau$-LES, $\ttwo\nu\kappa_4$ and $\tau\eta\epsilon_4 B$ lift to the nonzero elements $h_2\beta w_2^2 \in\Ext^{20,134}(tmf)$ and $h_0h_2\beta w_2^2\in\Ext^{21,135}(tmf)$ respectively. These are related by an $h_0$ multiplication.
\end{proof}

\begin{lemma}
\label{t.137.26}
There is a hidden $\ttwo$-extension from $\nu_5\kappa\in\pi_{137,163}(tmf)$ to $\tau\eta\epsilon_5 B\linebreak\in\pi_{137,164}(tmf)$.
\end{lemma}

\begin{proof}
This follows from the classical relations $\eta_4\kappa=\eta\kappa_4$ \cite[Table 9.9]{BR21} and $\ttwo\nu_5=\eta\eta_1\eta_4$. We see then that $\ttwo\nu_5\kappa=\eta\eta_1\eta_4\kappa=\eta^2\eta_1\kappa_4=\tau \epsilon_4\eta_1B=\tau\eta\epsilon_5 B$.
\end{proof}

\begin{lemma}
\label{t.143.26}
There is a hidden $\ttwo$-extension from $\nu_5\kappabar\in\pi_{143,169}(tmf)$ to \linebreak $\tau^2\eta_1\kappa_4B\in\pi_{143,170}(tmf)$.
\end{lemma}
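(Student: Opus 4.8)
The plan is to run the $C\tau$-LES of method (3) exactly as in Lemma~\ref{t.47.10}, of which the present statement is the $w_2^2$-multiple. First I would set up the degrees. With source $\nu_5\kappabar\in\pi_{143,169}(tmf)$ we have $t=143$, $w=169$, so the relevant instance of the $C\tau$-LES square (taking $\alpha=\ttwo$, $z=h_0$) is
\begin{equation*}
\begin{tikzcd}
\Ext^{24,168}(tmf) \ar[r] \ar[d,"h_0"'] & \pi_{143,169}(tmf) \ar[d,"\ttwo"] \\
\Ext^{25,169}(tmf) \ar[r] & \pi_{143,170}(tmf) \\
\end{tikzcd}
\end{equation*}
where the left-hand groups are $\Ext^{w-t-2,w-1}$ and $\Ext^{w-t-1,w}$ and the left vertical map is multiplication by $h_0\in\Ext^{1,1}$.

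Next I would establish the two $\tau$-torsion facts that place both homotopy classes in the image of the connecting (horizontal) maps: $\tau\cdot\nu_5\kappabar=0$ in $\pi_{143,168}(tmf)$ and $\tau\cdot(\tau^2 B\eta_1\kappa_4)=\tau^3 B\eta_1\kappa_4=0$ in $\pi_{143,169}(tmf)$. The first follows from the synthetic differential $d_2(w_2^3)=3\,w_2^2\,d_2(w_2)=\tau\,w_2^2\alpha\beta g$, coming from the classical $d_2(w_2)=\alpha\beta g$ of \hyperref[Table 3.2]{Table 3.2} together with \hyperref[Theorem 2.8]{Theorem 2.8}(1b); this makes $\nu_5\kappabar$, detected by $w_2^2\alpha\beta g\in\Ext^{26,169}(tmf)$, a $\tau$-torsion class killed by $\tau$. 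The second is read off directly from the $E_\infty$-chart in \hyperref[Appendix A]{Appendix A}. By exactness of the two rows, $\nu_5\kappabar$ then lifts along the connecting map to a nonzero class in $\Ext^{24,168}(tmf)$ and $\tau^2 B\eta_1\kappa_4$ lifts to a nonzero class in $\Ext^{25,169}(tmf)$.

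The crux is the identification of these lifts as $w_2^3\in\Ext^{24,168}(tmf)$ and $h_0 w_2^3\in\Ext^{25,169}(tmf)$, which are related by $h_0$-multiplication; this is precisely the $w_2^2$-multiple of the pair $(w_2,h_0w_2)$ from Lemma~\ref{t.47.10}. Note $h_0 w_2^3\neq 0$, since $h_0^2 w_2^3$ already detects $D_6$ in \hyperref[Table 3.3]{Table 3.3}. Granting the identification, commutativity of the square gives $\ttwo\cdot\nu_5\kappabar=\ttwo\cdot\partial(w_2^3)=\partial'(h_0 w_2^3)=\tau^2 B\eta_1\kappa_4$, the desired hidden extension.

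I expect the main obstacle to be the bookkeeping underlying the identification of the lifts: confirming from Bruner--Rognes' computation of $\Ext_{\cA(2)}(\bF_2,\bF_2)$ that $\Ext^{24,168}(tmf)$ and $\Ext^{25,169}(tmf)$ contain $w_2^3$ and the nonzero product $h_0 w_2^3$ as the classes hit by the connecting maps, that no competing classes in those bidegrees spoil the lifts, and that the $\tau$-torsion orders of $\nu_5\kappabar$ and $\tau^2 B\eta_1\kappa_4$ are exactly as required so that the connecting maps carry $w_2^3$ and $h_0 w_2^3$ onto them. Everything else is degree bookkeeping parallel to the lower-dimensional Lemma~\ref{t.47.10}.
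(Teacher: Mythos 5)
Your proposal is correct and follows essentially the same route as the paper's proof: both use the $C\tau$-LES with the observations that $\tau\nu_5\kappabar=0$ and $\tau^3 B\eta_1\kappa_4=0$, lift the two classes to $w_2^3\in\Ext^{24,168}(tmf)$ and $h_0w_2^3\in\Ext^{25,169}(tmf)$, and conclude via the $h_0$-multiplication relating them. Your additional justifications (the synthetic $d_2(w_2^3)=\tau w_2^2\alpha\beta g$ explaining the $\tau$-torsion of $\nu_5\kappabar$, and the nonvanishing of $h_0w_2^3$ via $h_0^2w_2^3$ detecting $D_6$) are sound details that the paper leaves implicit.
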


\begin{proof}
The elements $\tau\nu_5\kappabar$ and $\tau^3\eta_1\kappa_4B$ are both zero so via the $C\tau$-LES, $\nu_5\kappabar$ and $\tau^2\eta_1\kappa_4B$ lift to the elements $w_2^3\in\Ext^{24,168}(tmf)$ and $h_0w_2^3\in\Ext^{25,169}(tmf)$ respectively. These are related by an $h_0$ multiplication.
\end{proof}

\begin{lemma}
\label{t.144.28}
There is a hidden $\ttwo$-extension from $\kappa\kappa_4\kappabar\in\pi_{144,172}(tmf)$ to $\tau^2\epsilon_5B^2\in\pi_{144,173}(tmf)$.
\end{lemma}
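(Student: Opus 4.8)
The plan is to recognize this as the synthetic shadow of a classical hidden $2$-extension and to produce it by method (2), keeping the $C\tau$-LES (method (3)) in reserve. First I would record the relevant detecting classes. The source $\kappa\kappa_4\kappabar$ is a product of the permanent cycles $d_0$, $w_2^2 d_0$, and $g$, so it is detected by $w_2^2 d_0^2 g \in \Ext^{28,172}(tmf)$, while $B^2\epsilon_5$ is detected by $w_1^2 w_2^2(\delta+\alpha g)\in\Ext^{31,175}(tmf)$. Both $\ttwo\kappa\kappa_4\kappabar$ and $\tau^2 B^2\epsilon_5$ lie in $\pi_{144,173}(tmf)$, and under $\tau$-localization $\tau^2 B^2\epsilon_5\mapsto B^2\epsilon_5$, so the target is the $\tau^2$-multiple of the $\tau$-free class $B^2\epsilon_5\in\pi_{144,175}(tmf)$.

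The main route is method (2). I would first establish the classical hidden $2$-extension $2\kappa\kappa_4\kappabar = B^2\epsilon_5$ in $\pi_{144}(tmf_2^{\wedge})$, citing \cite[Chapter 9]{BR21} if it is recorded there. Its source sits in filtration $28$ and its target in filtration $31$, a jump of $3$; since $\ttwo$ is detected by $h_0$ in filtration $1$, the ``hiddenness'' is $3-1=2$, which is exactly the power of $\tau$ appearing in the synthetic target. Applying $\tau$-localization sends $\ttwo\kappa\kappa_4\kappabar$ to $2\kappa\kappa_4\kappabar=B^2\epsilon_5$ and $\tau^2B^2\epsilon_5$ to $B^2\epsilon_5$, so the two synthetic classes agree after inverting $\tau$. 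To upgrade this to an equality in $\pi_{144,173}(tmf)$ I would match Adams filtrations: the vanishing of $h_0\cdot w_2^2 d_0^2 g$ in $E_\infty$ (needed for the classical extension to be genuinely hidden) forces $\ttwo\kappa\kappa_4\kappabar$ into higher filtration, and the classical extension places it in filtration $31$, detected by $\tau^2 w_1^2 w_2^2(\delta+\alpha g)$, exactly as for $\tau^2 B^2\epsilon_5$.

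The hard part is twofold. Deciding which method applies hinges on whether $\kappa\kappa_4\kappabar$ is $\tau$-free or $\tau$-torsion, equivalently whether $w_2^2 d_0^2 g$ survives to $E_\infty$; this is the genuine branch point and must be read off the chart in \hyperref[Appendix A]{Appendix A}. Assuming it survives (so that both sides are $\tau$-free), the second difficulty is the filtration bookkeeping needed to exclude a discrepancy by a higher-filtration $\tau$-torsion class invisible to $\tau$-localization, which requires knowing $\pi_{144,173}(tmf)$ above filtration $31$. If moreover the classical extension is not directly available in the literature, I would instead produce it purely algebraically with the $C\tau$-LES, exhibiting classes $u$ and $v=h_0u$ in $\Ext(tmf)$ whose boundary images are the two sides — the technique of Remark~\ref{rem 4.2} and of Lemmas~\ref{t.47.10} and \ref{t.143.26}.

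Finally, as a contingency: if it turns out that $w_2^2 d_0^2 g$ is killed (by a relation among the generators or by a differential) so that both sides are $\tau$-torsion, then the extension is purely synthetic and the $C\tau$-LES is the correct tool. In that case I would verify $\tau\cdot\kappa\kappa_4\kappabar = 0$ and $\tau^3 B^2\epsilon_5=0$, lift the source and target to the nonzero classes $u\in\Ext^{26,171}(tmf)$ and $v\in\Ext^{27,172}(tmf)$, and check that $v=h_0 u$; the $\ttwo$-linearity of the boundary map $p$ then yields $\ttwo\kappa\kappa_4\kappabar=\tau^2 B^2\epsilon_5$, exactly in the style of the neighboring $C\tau$-LES lemmas.
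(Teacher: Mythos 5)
Your proposal's central branch point resolves against you, and that breaks every unconditional argument you offer. Since the lemma asserts $\ttwo\kappa\kappa_4\kappabar=\tau^2B^2\epsilon_5$ with $\tau^2B^2\epsilon_5\neq 0$, the source cannot be $\tau$-torsion: $B^2\epsilon_5$ is detected by $w_1^2w_2^2(\delta+\alpha g)$, which generates a free $\bF_2[\tau]$ on the $E_\infty$-page, so $\tau^3B^2\epsilon_5\neq 0$; if $\tau\kappa\kappa_4\kappabar$ were zero we would get $0=\ttwo\,\tau\kappa\kappa_4\kappabar=\tau^3B^2\epsilon_5$, a contradiction. Indeed $w_2^2d_0^2g$ survives and both sides are $\tau$-free. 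Consequently your final contingency (both sides $\tau$-torsion) is vacuous, and --- more importantly --- your declared fallback of producing the extension ``purely algebraically with the $C\tau$-LES'' cannot work either: in the $C\tau$-LES the image of the boundary map $\Ext^{w-t-2,w-1}(tmf)\to\pi_{t,w}(tmf)$ is exactly $\ker(\tau)$, so a $\tau$-free class such as $\kappa\kappa_4\kappabar$ (or $\tau^2B^2\epsilon_5$) is never a boundary image. This is precisely why every lemma in the paper that invokes method (3) begins by checking that $\tau$ kills the source and a suitable $\tau$-multiple of the target; that hypothesis fails here, so the technique of Lemmas~\ref{t.47.10} and \ref{t.143.26} is simply inapplicable to this extension.

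That leaves your main route, which outsources the entire content of the lemma to the classical hidden $2$-extension $2\kappa\kappa_4\kappabar=B^2\epsilon_5$, to be cited ``if it is recorded'' in \cite{BR21}. That conditional is the gap: your write-up never proves this classical statement, and it is not an off-the-shelf citation --- which is why Table~A.1 marks this entry with a lemma reference rather than ``classical,'' and why the paper manufactures the extension from other relations. The paper's chain is short: $\ttwo\kappa\kappabar=\eta^2\epsilon_1$ gives $\ttwo\kappa\kappa_4\kappabar=\eta^2\epsilon_1\kappa_4$; the classical relation $\epsilon_1\kappa_4=\epsilon_5\kappa$ \cite[Table 9.9]{BR21} converts this to $\eta^2\epsilon_5\kappa$; the hidden $\eta$-extension $\eta\epsilon_5\kappa=\tau B\eta_1\kappa_4$ (Lemma~\ref{e.142.27}) gives $\tau\eta B\eta_1\kappa_4$; and the classical $\eta$-extension $\eta\eta_1\kappa_4=\tau B\epsilon_5$ finishes the computation: $\ttwo\kappa\kappa_4\kappabar=\tau^2B^2\epsilon_5$. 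Note the logical direction is the reverse of yours: the synthetic equality is proved first, with the power of $\tau$ emerging from the two $\eta$-extensions rather than from filtration bookkeeping, and the classical $2$-extension is then a corollary after $\tau$-localization. Your filtration count ($3-1=2$, hence $\tau^2$) is a fine consistency check, but absent a proof of the classical input it establishes nothing.
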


\begin{proof}
This follows from a few non-trivial relations and hidden extensions:
\begin{align*}
\ttwo\kappa\kappa_4\kappabar &= \eta^2\epsilon_1\kappa_4 & & [\ttwo\kappa\kappabar=\eta^2\epsilon_1], \\
&= \eta^2\epsilon_5\kappa & & [\text{\cite{BR21}},\text{Table 9.9}], \\
&= \tau\eta \eta_1\kappa_4 B & & [\text{Lemma }~\ref{e.144.28}], \\
&= \tau^2\epsilon_5B^2 & & [\text{classical } \eta\text{-extension}]. 
\end{align*}

\end{proof}

\begin{lemma}
\label{t.150.28}
There is a hidden $\ttwo$-extension from $\kappa_4\kappabar^2\in\pi_{150,178}(tmf)$ to $\tau^2\epsilon_5\kappa B\in\pi_{150,179}(tmf)$. 
\end{lemma}

\begin{proof}
This uses a classical hidden $\ttwo$-extension and relation:
\begin{align*}
    \ttwo\kappa_4\kappabar^2&= \tau^2\epsilon_1\kappa_4 B & & [\text{classical $\ttwo$-extension}], \\
    &=\tau^2\epsilon_5\kappa B & & [\epsilon_1\kappa_4=\epsilon_5\kappa].
\end{align*}
\end{proof}

\begin{lemma}
\label{t.157.30}
There is a hidden $\ttwo$-extension from $\nu_5\kappa\kappabar\in\pi_{157,187}(tmf)$ to $\tau^2\eta_1B_4\kappabar B\in\pi_{157,188}(tmf)$.
\end{lemma}

\begin{proof}
By Lemma~\ref{t.143.26}, there is a hidden $\ttwo$-extension $\ttwo\cdot\nu_5\kappabar=\tau^2\eta_1\kappa_4 B$. The extension in question follows from the relation $\kappa\kappa_4=B_4\kappabar$. 
\end{proof}

\begin{lemma}
\label{t.161.30}
There is a hidden $\ttwo$-extension from $\ttwo\nu_6\kappa\in\pi_{161,191}(tmf)$ to $\tau\eta_1\epsilon_5 B\in\pi_{161,192}(tmf)$.
\end{lemma}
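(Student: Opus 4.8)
The plan is to use method~(3), the $C\tau$-LES, exactly as in the proofs of Lemma~\ref{t.65.14} and Lemma~\ref{t.113.22}, of which this statement is the evident $w_2^2$-periodic analogue. Indeed, in $\Ext$-degrees we have $\nu_6\leftrightarrow h_2w_2^3 = w_2^2\cdot h_2w_2$ and $\epsilon_5\leftrightarrow w_2^2(\delta+\alpha g)=w_2^2\cdot\epsilon_1$, so the whole picture should be obtained from Lemma~\ref{t.65.14} by multiplying the relevant $\Ext$ classes by $w_2^2$. One cannot simply transport the homotopy relation $\ttwo^2\nu_2\kappa=\tau B\eta_1\epsilon_1$ of Lemma~\ref{t.65.14} along a $w_2^2$-periodicity operator, because $w_2^2$ supports the differential $d_3(w_2^2)=\beta^3\gamma^2$ (see \hyperref[Table 3.2]{Table 3.2}) and is therefore not a permanent cycle; there is no homotopy-level $w_2^2$-multiplication. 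The $C\tau$-LES circumvents this by working one filtration lower, in $\Ext$, where $w_2^2$-multiplication is defined.

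First I would read off the $E_\infty$-chart to confirm that $\tau\ttwo\nu_6\kappa\in\pi_{161,190}(tmf)$ and $\tau^2B\eta_1\epsilon_5\in\pi_{161,191}(tmf)$ both vanish; this is exactly what lets the boundary map of the $C\tau$-LES lift both $\ttwo\nu_6\kappa$ and $\tau B\eta_1\epsilon_5$ to $\Ext$. Next I would identify the lifts by degree-matching against the connecting map $\Ext^{w-t-2,w-1}(tmf)\to\pi_{t,w}(tmf)$: with $(t,w)=(161,191)$ this places $\ttwo\nu_6\kappa$ in $\Ext^{28,190}(tmf)$, where the relevant class is $h_2w_2^3\beta$, and with $(t,w)=(161,192)$ it places $\tau B\eta_1\epsilon_5$ in $\Ext^{29,191}(tmf)$, where the class is $h_0h_2w_2^3\beta$. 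Since $h_0\cdot(h_2w_2^3\beta)=h_0h_2w_2^3\beta$, the two lifts are related by an $h_0$-multiplication. Finally, taking $\alpha=\ttwo$ (so that $z=i(\ttwo)=h_0$) in the $C\tau$-LES commutative square
\[
\begin{tikzcd}
\Ext^{28,190}(tmf) \ar[r] \ar[d,"h_0"'] & \pi_{161,191}(tmf) \ar[d,"\ttwo"] \\
\Ext^{29,191}(tmf) \ar[r] & \pi_{161,192}(tmf) \\
\end{tikzcd}
\]
and invoking its commutativity yields the hidden extension $\ttwo^2\nu_6\kappa=\tau B\eta_1\epsilon_5$.

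The main obstacle is the single algebraic input that $h_0h_2w_2^3\beta$ is nonzero in $\Ext^{29,191}_{\cA(2)}(\bF_2,\bF_2)$ (and likewise $h_2w_2^3\beta\neq 0$), since it is this nonvanishing that makes the target, hence the extension, nontrivial. Via the $\bF_2[w_1,w_2]$-module structure of $\Ext_{\cA(2)}(\bF_2,\bF_2)$ this reduces to the nonvanishing of $h_0h_2\beta$ already used in Lemma~\ref{t.17.6}, together with the injectivity of $w_2$-multiplication on this $w_1$-torsion, $w_2$-free class; I would confirm both from the $\Ext_{\cA(2)}$ charts of \cite{BR21}. A secondary check is that no competing class shares these bidegrees and could spoil the identification of the lifts, and that the $\tau$-torsion vanishings above genuinely hold in this higher stem rather than being inherited naively from the stem-$65$ computation.
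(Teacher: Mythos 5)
Your proposal is correct and is essentially the paper's own proof: both use the $C\tau$-LES with $\alpha=\ttwo$, lifting $\ttwo\nu_6\kappa$ and $\tau B\eta_1\epsilon_5$ (after checking $\tau\ttwo\nu_6\kappa=0$ and $\tau^2B\eta_1\epsilon_5=0$) to $h_2w_2^3\beta\in\Ext^{28,190}(tmf)$ and $h_0h_2w_2^3\beta\in\Ext^{29,191}(tmf)$, which differ by $h_0$-multiplication. The extra discussion of $w_2^2$-periodicity, the failure of $w_2^2$ to be a permanent cycle, and the nonvanishing checks in $\Ext_{\cA(2)}$ is sound but goes beyond what the paper records.
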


\begin{proof}
The elements $\tau\ttwo\nu_6\kappa$ and $\tau^2\eta_1\epsilon_5 B$ are both zero so via the $C\tau$-LES, $\ttwo\nu_6\kappa$ and $\tau\eta_1\epsilon_5 B$ lift to the nonzero elements $h_2\beta w_2^3\in\Ext^{28,190}(tmf)$ and $h_0h_2\beta w_2^3\in\Ext^{29,191}(tmf)$ respectively. These are related by an $h_0$ multiplication.
\end{proof}

\begin{lemma}
\label{t.164.30}
There is a hidden $\ttwo$-extension from $D_6\kappabar\in\pi_{164,194}(tmf)$ to $\tau D_4\kappabar^3 B\in\pi_{164,195}(tmf)$.
\end{lemma}

\begin{proof}
This follows from a hidden $\nu$-extension which we prove later in Lemma~\ref{n.161.30}. Note that $D_6\kappabar  = \nu\nu_6\kappa$ and $\ttwo\nu\nu_6\kappa = \tau D_4\kappabar^3 B$ via Lemma~\ref{n.161.30}. Hence $\ttwo D_6\kappabar  = \ttwo\nu\nu_6\kappa = \tau D_4\kappabar^3 B$.
\end{proof}

\subsection{\texorpdfstring{$\eta$}{Eta}-hidden extensions}
\label{etahidextsubsection}

\begin{lemma}
\label{e.15.5}
There is a hidden $\eta$-extension from $\eta\kappa\in\pi_{15,20}(tmf)$ to $\tau \epsilon B\in\pi_{16,22}(tmf)$.
\end{lemma}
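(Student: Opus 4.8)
The plan is to use the $C\tau$-long exact sequence (method~(3)), exploiting that the target is $\tau$-torsion. Before setting up the diagram I would record the detecting classes and their $\tau$-orders, since these drive everything. The source $\eta\kappa\in\pi_{15,20}(tmf)$ is detected by $h_1d_0\in\Ext^{5,20}(tmf)$, which is a permanent cycle that neither supports nor is the target of any Adams differential, hence is $\tau$-free. For the target, the classical differential $d_3(e_0)=w_1c_0$ of \hyperref[Table 3.2]{Table 3.2} becomes, by \hyperref[Theorem 2.8]{Theorem 2.8}(1)(b), the synthetic differential $d_3(e_0)=\tau^2w_1c_0$, so $w_1c_0$ generates an $\bF_2[\tau]/\tau^2$-summand of $E_\infty$. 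Thus $B\epsilon=\{w_1c_0\}\in\pi_{16,23}(tmf)$ satisfies $\tau^2B\epsilon=0$ but $\tau B\epsilon\neq0$, and the claimed target $\tau B\epsilon\in\pi_{16,22}(tmf)$ is a nonzero $\tau$-torsion class. Note that after inverting $\tau$ the leading-order relation collapses (classically $w_1c_0$ is killed by $d_3(e_0)$), so this is a \emph{purely synthetic} extension and method~(2) is not available to us.

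Next I would pin down the $\tau$-divisibility of the product. Since $i$ is compatible with the $\pi_{*,*}(\bS^{0,0})$-action, we have
\[
i(\eta\cdot\eta\kappa)=h_1\cdot i(\eta\kappa)=h_1\cdot h_1d_0=h_1^2d_0\in\Ext^{6,22}(tmf).
\]
The relation $h_1^2d_0=0$ in $\Ext_{\cA(2)}$ (see \cite[Table 3.4]{BR21}) — which is exactly what makes the extension hidden, as the expected filtration-$6$ product vanishes — then gives $i(\eta^2\kappa)=0$. By exactness of the $C\tau$-LES this places $\eta^2\kappa$ in the image of $\tau:\pi_{16,23}(tmf)\to\pi_{16,22}(tmf)$, so $\eta^2\kappa=\tau y$ for some $y\in\pi_{16,23}(tmf)$ whose only possible detecting class is $w_1c_0$; that is, $y=B\epsilon$ or $y=0$.

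The remaining step, proving $y\neq0$, is the main obstacle. It cannot be read directly off the left square of the $C\tau$-LES: the square would realize the extension only if $\eta\kappa$ lay in the image of $p$ (it does not, being $\tau$-free) and if $e_0$ were $h_1$-divisible in $\Ext_{\cA(2)}$ (it is not). To finish I would work on the target side, using that the synthetic $d_3$ on $e_0$ identifies the boundary $p:\pi_{17,21}(C\tau)\cong\Ext^{4,21}(tmf)\to\pi_{16,22}(tmf)$ with $p(e_0)=\tau B\epsilon$, and then identify $\eta^2\kappa$ with $p(e_0)$ by tracking the $\pi_{*,*}(\bS^{0,0})$-module structure of $\pi_{*,*}(\nu_{H\bF_2}tmf_2^{\wedge}\otimes C\tau)\cong\Ext^{*,*}_{\cA}(tmf)$ through \hyperref[Thm. 2.6]{Theorem 2.6}, realizing the product as the $d_3$-shuffle that forces $d_3(e_0)=w_1c_0$. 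Ruling out the degenerate case $y=0$ — equivalently, showing that the hidden extension is as large as the $\Ext$-module structure permits — is where I expect essentially all of the difficulty to lie.
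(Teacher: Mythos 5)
Your setup is correct and correctly diagnosed: $\eta\kappa$ is detected by $h_1d_0$ and is $\tau$-free, $B\epsilon=\{w_1c_0\}$ generates an $\bF_2[\tau]/\tau^2$-summand because $d_3(e_0)=\tau^2w_1c_0$, the extension is invisible after $\tau$-localization, and the vanishing $i(\eta^2\kappa)=h_1^2d_0=0$ does place $\eta^2\kappa$ in the image of $\tau$, reducing the lemma to the non-vanishing $\eta^2\kappa\neq 0$. But that non-vanishing is the entire content of the lemma, and your proposal never proves it. The sketched completion --- identifying $\eta^2\kappa$ with $p(e_0)$ ``by tracking the module structure'' --- has no mechanism behind it: $p$ is $\pi_{*,*}(\bS^{0,0})$-linear, but since $\eta\kappa\notin\operatorname{im}(p)$ (it is $\tau$-free) and $e_0$ is not an $h_1$-multiple in $\Ext_{\cA(2)}$, linearity relates neither $\eta\cdot\eta\kappa$ nor $\eta\cdot p(\text{anything})$ to $p(e_0)$. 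You identified precisely why method (3) stalls when the source of the extension is not $\tau$-torsion, and then asserted the needed identification anyway; as stated, $\eta^2\kappa=p(e_0)$ is equivalent to the lemma, so the argument is circular at its core.

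The paper closes this gap by abandoning the $C\tau$-LES altogether and arguing multiplicatively (methods (1) and (2)): the $\Ext_{\cA(2)}$-relation $h_1^3=h_0^2h_2$ gives the synthetic relation $\eta^3=\ttwo^2\nu$, whence $\eta^3\kappa=\ttwo^2\nu\kappa=B\nu^3$ (using the relation $\ttwo^2\kappa\nu=B\nu^3$ coming from $h_0^2d_0=w_1h_2^2$), and the classical hidden $\nu$-extension $\nu\cdot\nu^2=\eta\epsilon$ lifts to the synthetic relation $\nu^3=\tau\eta\epsilon$, so that $\eta\cdot(\eta^2\kappa)=\tau B\eta\epsilon=\eta\cdot(\tau B\epsilon)$. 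Since multiplication by $\eta$ is injective in these bidegrees, this forces $\eta^2\kappa=\tau B\epsilon$. The takeaway for your approach: when the source of a synthetic hidden extension is $\tau$-free, the $C\tau$-LES can at best show the product is $\tau$-divisible; establishing which $\tau$-multiple it is requires independent input, here supplied by ring relations and a $\tau$-lifted classical hidden extension.
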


\begin{proof}
This follows essentially from the classical $\nu$-extension $\nu\cdot(\nu^2)=\tau\eta\epsilon$. Analogous to the classical case, there is a synthetic relation $\ttwo^2\nu=\eta^3$. Hence $\eta^3\kappa=\ttwo^2\nu\kappa=\nu^3B=\tau\eta\epsilon B$. Multiplication by $\eta$ in these bidegrees is injective so we must have that $\eta^2\kappa=\tau\epsilon B$.  
\end{proof}

\begin{lemma}
\label{e.46.11}
There is a hidden $\eta$-extension from $\epsilon_1\kappa\in\pi_{46,57}(tmf)$ to $\tau \eta_1\kappa B\in\pi_{47,59}(tmf)$.
\end{lemma}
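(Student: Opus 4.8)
The plan is to deduce this from the multiplicative structure on $\pi_{*,*}(\nu_{H\bF_2}tmf_2^{\wedge})$ (method (1)) rather than from the $C\tau$-LES. The $C\tau$-LES of method (3) is unavailable here because the target is not $\tau$-torsion of order one: by Lemma~\ref{t.47.10} we have $\tau^2B\eta_1\kappa\neq 0$, so $\tau\cdot(\tau B\eta_1\kappa)=\tau^2B\eta_1\kappa\neq 0$, and hence $\tau B\eta_1\kappa$ lies outside $\operatorname{im}(p)=\ker(\tau)$ and admits no $C\tau$-lift.

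First I would record the base hidden $\eta$-extension $\eta\epsilon_1=\tau B\eta_1$ in $\pi_{33,41}$. Classically this is the relation $\eta\epsilon_1=B\eta_1=\epsilon\eta_1$ of \cite[Table 9.9]{BR21}, and the single power of $\tau$ in its synthetic refinement is forced on weight grounds: $\epsilon_1$ is detected by $\delta+\alpha g$ in Adams filtration $7$ while $B\eta_1$ is detected by $w_1\gamma$ in filtration $9$, so the $\eta$-extension raises filtration by two and is hidden of order one. Multiplying this identity by $\kappa\in\pi_{14,18}$ then yields $\eta\epsilon_1\kappa=\tau B\eta_1\kappa$ in $\pi_{47,59}$ at once, as an equation of ring elements; no change in the power of $\tau$ can occur, since we are simply multiplying a fixed relation by $\kappa$. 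Finally I would confirm that the target is genuinely nonzero, so that the extension is non-trivial: $\tau B\eta_1\kappa\neq 0$ because $\tau^2B\eta_1\kappa\neq 0$ by Lemma~\ref{t.47.10}.

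The hardest part will be justifying the base extension $\eta\epsilon_1=\tau B\eta_1$ with exactly one factor of $\tau$, i.e. checking that the classical $\eta$-extension jumps Adams filtration by precisely two and that $\epsilon_1\kappa$ and $B\eta_1\kappa$ are detected in the expected filtrations, so that no extra $\tau$-divisibility is introduced after multiplying by $\kappa$. Should the base relation be awkward to invoke directly, I would instead argue by injectivity of $\eta$-multiplication as in Lemma~\ref{e.15.5}: applying $\eta$ and using $\eta^2\kappa=\tau B\epsilon$ gives $\eta^2\epsilon_1\kappa=\tau B\epsilon\epsilon_1$, while $\eta\eta_1\kappa=\tau B\epsilon_1$ gives $\tau\eta B\eta_1\kappa=\tau^2B^2\epsilon_1$, so the identity reduces to $\tau B\epsilon\epsilon_1=\tau^2B^2\epsilon_1$, that is, to the weight-refined relation $\epsilon\epsilon_1=\tau B\epsilon_1$ underlying the classical $\epsilon\epsilon_1=B\epsilon_1$ of Remark~\ref{rem 4.2}; injectivity of $\eta$ on $\pi_{47,59}$, read off from the $E_\infty$-chart, then finishes the proof.
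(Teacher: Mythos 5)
Your main route has a genuine gap: the base relation $\eta\epsilon_1=\tau B\eta_1$ in $\pi_{33,41}(tmf)$ is false. The product $\eta\epsilon_1$ is not a hidden extension at all: it is detected on the $E_\infty$-page by the nonzero filtration-$8$ class $h_1(\delta+\alpha g)$, and $\pi_{33}(tmf_2^\wedge)\cong(\bZ/2)^2$ has $\eta\epsilon_1$ (filtration $8$) and $B\eta_1$ (filtration $9$, detected by $w_1\gamma$) as independent generators. The classical relation behind your citation of \cite{BR21} is the hidden $\eta$-extension out of $B_1$, namely $\eta B_1=B\eta_1=\epsilon\eta_1$ --- it is $B_1$ and the subscripted factor $\eta_1$, not $\epsilon_1$, that carry the extension. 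The paper records exactly this: the $t=32$ row of Table A.2 lists only $B_1$, and since Theorem~\ref{4.4} asserts the tables are complete, the absence of a row for $\epsilon_1$ is precisely the assertion that $\eta\epsilon_1$ is not hidden. As a consistency check, if your base relation were true, then Lemma~\ref{e.46.11} and Lemma~\ref{e.66.15} would both be one-line consequences (multiply by $\kappa$, resp.\ $\kappa\kappabar$); the reason they require separate arguments is that the extension only becomes hidden after multiplication, because $h_1(\delta+\alpha g)d_0$ and $h_1(\delta+\alpha g)g$ vanish on $E_\infty$ while $h_1(\delta+\alpha g)$ itself does not. The paper's actual proof multiplies the unknown product by $\tau$ instead: using the classical hidden extensions $\eta\eta_1\kappabar=\tau\epsilon_1\kappa$ and $\eta\cdot(\eta\kappabar)=\tau^2B\kappa$, one computes $\tau\cdot\eta\epsilon_1\kappa=\eta^2\eta_1\kappabar=\tau^2B\eta_1\kappa$, and concludes by injectivity of $\tau$-multiplication in this bidegree.

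Your fallback has the right shape --- it is the same multiply-and-cancel strategy as the paper, with $\eta$ playing the role that $\tau$ plays there --- but it is incomplete as written. It hinges on the synthetic relation $\epsilon\epsilon_1=\tau B\epsilon_1$, which is established nowhere in the paper: Remark~\ref{rem 4.2} records only the classical $\epsilon\epsilon_1=B\epsilon_1$, and does so as a \emph{consequence} of Lemma~\ref{e.64.14} rather than as an available input, and fixing the exact power of $\tau$ requires knowing that $c_0(\delta+\alpha g)$ vanishes on $E_\infty$. It also invokes ``injectivity of $\eta$ on $\pi_{47,59}$, read off from the $E_\infty$-chart,'' but $\eta$-injectivity cannot be read off a chart, precisely because $\eta$-multiplication can itself be hidden; what you actually need is $\eta\cdot\tau B\eta_1\kappa=\tau^2B^2\epsilon_1\neq 0$, which is yet another unproved nonvanishing statement. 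On the positive side, your opening observation is correct and pertinent: method (3) is unavailable here because neither $\epsilon_1\kappa$ (note $\tau\epsilon_1\kappa=\eta\eta_1\kappabar\neq0$) nor $\tau B\eta_1\kappa$ is $\tau$-torsion, which is exactly why the paper falls back on methods (1) and (2) for this lemma.
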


\begin{proof}
This follows from the classical hidden $\eta$-extensions $\eta\eta_1\kappabar=\tau\epsilon_1\kappa$ and $\eta\cdot (\eta\kappabar)=\tau^2\kappa B$. Note that $\tau\eta\epsilon_1\kappa=\eta^2\eta_1\kappabar=\tau^2\eta_1\kappabar B$. Multiplication by $\tau$ is injective in these bidegrees so we must have that $\eta\epsilon_1\kappabar=\tau\eta_1\kappa B$
\end{proof}

\begin{lemma}
\label{e.60.14}
There is no hidden $\eta$-extension from $C_2\in\pi_{60,74}(tmf)$.
\end{lemma}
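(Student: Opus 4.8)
The plan is to show that the element $C_2 \in \pi_{60,74}(tmf)$ supports no nonzero $\eta$-multiple by locating its target bidegree $\pi_{61,76}(tmf)$ and arguing that the only candidate class is ruled out. First I would determine the possible targets: an $\eta$-extension on $C_2$ would land in $\pi_{61,76}(tmf)$, so I would read off the $E_\infty$-page (from the charts in \hyperref[Appendix A]{Appendix A}) exactly which classes populate this bidegree and in which Adams filtrations, together with the $\tau$-torsion data. Since $C_2$ is detected in a definite filtration, a hidden $\eta$-extension must raise filtration and hit a class of strictly higher filtration than $h_1 \cdot (\text{detecting class of } C_2)$, which vanishes on the $E_\infty$-page.

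Next I would try the three standard methods in turn. Method (1), the ring structure, and method (2), pulling back classical extensions via $\tau$-inversion, should be checked first: if the $\tau$-localized class $C_2 \mapsto C_2^{cl}$ has no classical $\eta$-extension (which can be read from \cite[Section 9.2]{BR21}), that forces any synthetic $\eta$-extension to be divisible by $\tau$, constraining the target to a $\tau$-torsion class. I would then apply method (3), the $C\tau$-LES, to the relevant bidegree: computing $i(C_2) \in \Ext(tmf)$ and asking whether $h_1 \cdot i(C_2)$ is nonzero in $\Ext^{*,*}(tmf)$ will detect an \emph{unhidden} contribution, while the hidden part is governed by whether the candidate target in $\pi_{61,76}(tmf)$ lifts (under $i$) to a class related to $i(C_2)$ by $h_1$-multiplication. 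Given that this lemma asserts \emph{no} extension, I expect the $\Ext$-level computation to show $h_1 \cdot i(C_2) = 0$ and, crucially, that no $\tau$-torsion target is $h_1$-related to $C_2$ through the LES.

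The hard part will be handling the exceptional cases flagged in the preceding Remark, where the author states that Lemma~\ref{e.60.14} is one of the two places requiring Toda bracket methods rather than the three routine methods. This strongly suggests the naive $\Ext$ and ring-theoretic arguments are \emph{insufficient} to rule out the extension on their own: there is likely a candidate target class in $\pi_{61,76}(tmf)$ that survives all the filtration and $C\tau$-LES obstructions, so that the vanishing of the $\eta$-multiple cannot be seen algebraically. The genuine obstacle is therefore to identify a Toda bracket (or a synthetic analog thereof) expressing the putative target, and to show via shuffling or a juggling relation that $\eta \cdot C_2$ must lie in an indeterminacy or a bracket that forces it to vanish. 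Concretely, I would look for a relation writing $C_2$ itself as a bracket $\langle a, b, c\rangle$ with one of the entries an $\eta$-multiple, then use the standard shuffle $\eta \langle a,b,c\rangle = \langle \eta a, b, c \rangle$ (up to sign and indeterminacy) to reduce $\eta C_2$ to an expression that is visibly zero in this range, thereby completing the argument that no hidden $\eta$-extension exists.
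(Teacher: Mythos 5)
Your overall instinct---that $\eta C_2$ must be attacked by writing $C_2$ as a Toda bracket and shuffling---is the right one, and it matches the opening move of the paper's proof: by \cite[Theorem 9.2]{BR21} one has $C_2=\langle\nu,\kappabar,C_1\rangle$ with no indeterminacy, and the shuffle $\eta\langle\nu,\kappabar,C_1\rangle=\langle\eta,\nu,\kappabar\rangle C_1$ together with the identification $\langle\eta,\nu,\kappabar\rangle=\eta_1$ (itself requiring machine computation of the relevant Massey product and Moss's theorem \cite{Moss70}) gives $\eta C_2=\eta_1C_1$. But here is the genuine gap: your plan presumes that a single shuffle reduces $\eta C_2$ to something ``visibly zero,'' e.g.\ by choosing a bracket presentation with an entry that dies under $\eta$. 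No such presentation is available, and the shuffle does \emph{not} produce zero: it produces $\eta_1C_1$, a product of two nonzero classes in a nonzero bidegree, and proving that this product vanishes is the actual content of the lemma. (Note also that the juggling formula used is $\eta\langle a,b,c\rangle=\langle\eta,a,b\rangle c$, sliding $\eta$ into the bracket and pulling $c$ out, not $\eta\langle a,b,c\rangle=\langle\eta a,b,c\rangle$ as you wrote; even when the latter inclusion holds, a bracket with a zero entry equals its indeterminacy, which need not vanish.)

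Killing $\eta_1C_1$ requires a second, independent Toda bracket argument of which your proposal has no analogue. The paper observes that $C_1$ is detected by $h_0\alpha^3$ and that $d_2(\alpha^3)=h_1^2w_1\gamma$, so $h_0\alpha^3$ lies in the Massey product $\langle h_0,\gamma,h_1^2w_1\rangle$ on the $E_3$-page; since there is no indeterminacy and no crossing differentials, Moss's convergence theorem yields $C_1\in\langle 2,\eta_1,\eta^2B\rangle$ classically, which lifts to the synthetic bracket $\langle\ttwo,\eta_1,\tau\eta^2B\rangle\ni C_1$. Shuffling again gives $\eta_1C_1\in\langle\eta_1,\ttwo,\eta_1\rangle\tau\eta^2B$, and the proof closes with an indeterminacy analysis: the only possible nonzero elements of $\langle\eta_1,\ttwo,\eta_1\rangle\subset\pi_{51,61}(tmf)$ are $\ttwo\nu_2$ and $\tau\ttwo^2\nu_2$, and both die after multiplication by $\tau\eta^2B$ because both are annihilated by $\eta$. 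None of this---the Massey product arising from an Adams differential, the appeal to Moss's theorem, the second shuffle, or the case analysis---appears in your outline, and without it the argument does not close. Your preliminary steps (reading off $\pi_{61,76}(tmf)$ from the charts, the $C\tau$-LES) are reasonable sanity checks, but as you yourself anticipated, they cannot rule out the hidden extension on their own.
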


\begin{proof}
We first note that by \cite[Theorem 9.2]{BR21}, there is a classical Toda bracket $$C_2=\langle \nu, \kappabar,C_1\rangle$$ with no indeterminacy. Multiplying by $\eta$, we shuffle to get
$$\eta \langle \nu, \kappabar,C_1\rangle=\langle \eta,\nu,\kappabar\rangle C_1.$$
The bracket $\langle \eta,\nu,\kappabar\rangle$ can easily be checked by machine and \cite{Moss70} to be $\eta_1$. Hence $\eta C_2=\eta_1 C_1$.

\bigskip

We now show that $\eta_1 C_1=0$. Because classically $C_1$ is detected by $h_0\alpha^3$ and $d_2(\alpha^3)=h_1^2\gamma w_1$, there is a Massey product $\langle h_0,\gamma,h_1^2w_1\rangle$ containing $h_0\alpha^3$ on the $\E_3$-page. There is no indeterminacy and no crossing differentials for this Massey product and so by \cite{Moss70}, $C_1\in\langle 2,\eta_1,\eta^2B\rangle$. Lifting to $\Syn_{\h\bF_2}$ via $\tau^{-1}$, we get a synthetic Toda bracket $\langle\ttwo,\eta_1,\tau\eta^2B\rangle\ni C_1$.

\bigskip

Shuffling with $\eta_1$, we get that
$$\eta_1\langle\ttwo,\eta_1,\tau\eta^2B\rangle=\langle \eta_1,\ttwo,\eta_1\rangle\tau\eta^2B.
$$
If $\langle \eta_1,\ttwo,\eta_1\rangle\subset\pi_{51,61}(tmf)$ contains non-zero elements, the only possibilities are $\ttwo\nu_2$ or $\tau\ttwo^2\nu_2$. Multiplying by $\tau\eta^2B$ in either case will result in 0 since both elements are 0 after multiplying by $\eta$. This completes the proof.
\end{proof}

\begin{lemma}
\label{e.61.14}
There is a hidden $\eta$-extension from $\nu_1\kappa\kappabar\in\pi_{61,75}(tmf)$ to $\tau \kappa\kappabar^2B\in\pi_{62,77}(tmf)$.
\end{lemma}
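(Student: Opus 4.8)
The plan is to obtain this extension by multiplying the already-established relation of Lemma~\ref{e.47.10} by $\kappa$. That lemma gives $\eta\cdot\nu_1\kappabar=\tau B\kappabar^2$ in $\pi_{48,59}(tmf)$, and $\kappa$ lives in $\pi_{14,18}(tmf)$ (it is detected by $d_0\in\Ext^{4,18}$). Since $\pi_{*,*}(\nu_{H\bF_2}tmf_2^{\wedge})$ is a bigraded-commutative ring, I can multiply both sides by $\kappa$ to get $\eta\cdot\nu_1\kappa\kappabar=\tau B\kappa\kappabar^2$, and a bidegree count places the source in $\pi_{61,75}(tmf)$ and the target in $\pi_{62,77}(tmf)$, exactly as claimed. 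Because $\kappa$ raises Adams filtration by a fixed amount, the filtration jump that makes the extension of Lemma~\ref{e.47.10} hidden is preserved, so the resulting $\eta$-extension is again hidden.

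The only gap in this argument is that multiplication by $\kappa$ could in principle annihilate the target, so I must separately confirm $\tau B\kappa\kappabar^2\neq0$. I would do this --- and, in fact, reprove the extension from scratch --- with the $C\tau$-LES of method (3), following the template of Lemma~\ref{e.47.10} and the companion $\ttwo$-extension of Lemma~\ref{t.61.14}. Both $\tau\cdot\nu_1\kappa\kappabar$ and $\tau^2B\kappa\kappabar^2$ vanish on the $E_\infty$-page, so by the $C\tau$-LES the source $\nu_1\kappa\kappabar$ lifts to $w_2d_0\in\Ext^{12,74}(tmf)$ (as already recorded in the proof of Lemma~\ref{t.61.14}) and the target $\tau B\kappa\kappabar^2$ lifts to $h_1w_2d_0\in\Ext^{13,76}(tmf)$. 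Since $h_1$ is the $C\tau$-image of $\eta$ and $h_1\cdot(w_2d_0)=h_1w_2d_0$, the commuting square
\[
\begin{tikzcd}
\Ext^{12,74}(tmf) \ar[r] \ar[d,"h_1"'] & \pi_{61,75}(tmf) \ar[d,"\eta"] \\
\Ext^{13,76}(tmf) \ar[r] & \pi_{62,77}(tmf) \\
\end{tikzcd}
\]
forces $\eta\cdot\nu_1\kappa\kappabar=\tau B\kappa\kappabar^2$, the target being nonzero precisely because $h_1w_2d_0\neq0$.

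The main obstacle is thus the purely algebraic input that $h_1w_2d_0\neq0$ in $\Ext_{\cA(2)}(\bF_2,\bF_2)$ --- equivalently, that $\kappa$ does not kill $\tau B\kappabar^2$. I do not expect to extract this cleanly from the generators-and-relations presentation by hand, so I would verify it with the machine computation of $\Ext_{\cA(2)}(\bF_2,\bF_2)$ in this range. Once that nonvanishing is in place, either route --- $\kappa$-multiplication of Lemma~\ref{e.47.10} or the $C\tau$-LES square above --- closes the argument.
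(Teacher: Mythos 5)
Your second argument --- the $C\tau$-LES square lifting $\nu_1\kappa\kappabar$ to $w_2d_0\in\Ext^{12,74}(tmf)$ and $\tau B\kappa\kappabar^2$ to $h_1w_2d_0\in\Ext^{13,76}(tmf)$, related by $h_1$-multiplication --- is exactly the paper's proof, and your premise checks ($\tau\nu_1\kappa\kappabar=0$, $\tau^2B\kappa\kappabar^2=0$, $h_1w_2d_0\neq 0$ in $\Ext_{\cA(2)}$) are the same implicit inputs the paper relies on. The preliminary route via $\kappa$-multiplication of Lemma~\ref{e.47.10} is a reasonable alternative reduction, but since you correctly note it still requires the nonvanishing of the target and you fall back on the $C\tau$-LES anyway, the completed proof is essentially identical to the paper's.
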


\begin{proof}
This follows directly from the classical hidden $\eta$-extension $\eta\nu_1=\tau\kappabar B$ by multiplying by $\kappa\kappabar$. 
\end{proof}

\begin{lemma}
\label{e.64.14}
There is a hidden $\eta$-extension from $\eta_1^2\kappa\in\pi_{64,78}(tmf)$ to $\tau \eta_1\epsilon_1 B\in\pi_{65,80}(tmf)$.
\end{lemma}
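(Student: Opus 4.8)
The plan is to use method (3), the $C\tau$-LES, with $\alpha=\eta\in\pi_{1,2}(\bS^{0,0})$ and $z=i(\eta)=h_1\in\Ext^{1,2}$. First I would record the bidegrees: the source $\eta_1^2\kappa$ lies in $\pi_{64,78}(tmf)$ and the proposed target $\tau B\eta_1\epsilon_1$ lies in $\pi_{65,80}(tmf)=\pi_{64+1,78+2}(tmf)$, so it occupies exactly the $\eta$-target bidegree. Following the recipe of the earlier $\ttwo$-extension lemmas (compare Lemma~\ref{t.17.6}), I would read off the $E_\infty$-chart of Appendix~A that $\tau\,\eta_1^2\kappa=0$ and $\tau^2 B\eta_1\epsilon_1=0$; these $\tau$-torsion orders are determined by the lengths of the classical Adams differentials via Theorem~2.8. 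Hence both classes are $\tau$-torsion and, by exactness of the $C\tau$-LES, lie in the image of the connecting map $p$ from $\pi_{*,*}(\nu tmf\otimes C\tau)\cong\Ext^{*,*}(tmf)$.

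The heart of the argument is to identify these connecting-map lifts and relate them by $h_1$. A $\tau$-torsion class in $\pi_{t,w}$ lifts to $\Ext^{w-t-2,\,w-1}(tmf)$, a class one stem above the detecting class of the homotopy element, exactly as in Lemma~\ref{t.17.6}. Thus $\eta_1^2\kappa$ lifts to a class $\tilde x\in\Ext^{12,77}(tmf)$ and $\tau B\eta_1\epsilon_1$ lifts to a class $\tilde y\in\Ext^{13,79}(tmf)$. I would identify $\tilde x$ with $w_2e_0$, the class generating $\Ext^{12,77}(tmf)$ in stem $65$, and $\tilde y$ with $h_0h_2w_2\beta\in\Ext^{13,79}(tmf)$ — the latter is precisely the lift already computed for the common target $\tau B\eta_1\epsilon_1$ in Lemma~\ref{t.65.14}, so it may be reused verbatim. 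The commuting square of the $C\tau$-LES
\[
\begin{tikzcd}
\Ext^{12,77}(tmf) \ar[r,"p"] \ar[d,"h_1"'] & \pi_{64,78}(tmf) \ar[d,"\eta"] \\
\Ext^{13,79}(tmf) \ar[r,"p"] & \pi_{65,80}(tmf)
\end{tikzcd}
\]
then gives $\eta\cdot\eta_1^2\kappa=\eta\cdot p(\tilde x)=p(h_1\tilde x)$, so the claim reduces to the single algebraic statement $h_1\tilde x=\tilde y$, i.e. $h_1 w_2e_0=h_0h_2w_2\beta$ in $\Ext_{\cA(2)}(\bF_2,\bF_2)$.

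I expect this last $\Ext$-identity to be the main obstacle, and I would verify it either from the Bruner–Rognes relation tables or by machine: it is equivalent to $w_2(h_1e_0+h_0h_2\beta)=0$, hence to an explicit relation in $\Ext^{5,23}_{\cA(2)}(\bF_2,\bF_2)$. The remaining care is bookkeeping about the lifts — I must confirm that $p(w_2e_0)=\eta_1^2\kappa$ rather than $0$ or another class, and that $h_1\tilde x\neq0$, so that the extension is genuine and not merely possible; both follow by comparing with the $\tau$-Bockstein $E_\infty$-data and with the identification already established in Lemma~\ref{t.65.14}. Finally, as observed in Remark~\ref{rem 4.2}, dividing the resulting relation $\eta\,\eta_1^2\kappa=\tau B\eta_1\epsilon_1$ by $\eta_1$ recovers the classical hidden extension $\eta\,\eta_1\kappa=\tau B\epsilon_1$, which serves as a useful consistency check on the computation.
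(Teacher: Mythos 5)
Your proposal is correct and is, at bottom, the paper's own argument: method (3), the $C\tau$-LES with $\alpha=\eta$ and $z=h_1$, the same $\tau$-torsion observations, and the same lift $w_2e_0\in\Ext^{12,77}(tmf)$ of the source $\eta_1^2\kappa$. The one genuine divergence is the identification of the lift of the target. The paper takes the lift of $\tau B\eta_1\epsilon_1$ to be $h_1w_2e_0$ itself, so the $h_1$-multiplication in the commuting square is tautological and no relation in $\Ext_{\cA(2)}(\bF_2,\bF_2)$ ever has to be invoked; you instead reuse the lift $h_0h_2w_2\beta$ furnished by Lemma~\ref{t.65.14}, which reduces the lemma to the algebraic identity $h_1w_2e_0=h_0h_2w_2\beta$ in $\Ext^{13,79}(tmf)$. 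That identity does hold --- it is the $w_2$-multiple of the stem-$18$ relation $h_1e_0=h_0h_2\beta$ (note the stem-$18$ relation implies, rather than is equivalent to, what you need, since multiplication by $w_2$ need not be injective) --- and it is implicitly confirmed by the paper itself, which uses $h_0h_2w_2\beta$ in Lemma~\ref{t.65.14} and $h_1w_2e_0$ in the present lemma as lifts of the very same class $\tau B\eta_1\epsilon_1$; so the machine check you defer would succeed. Comparing the two routes: the paper's choice of name makes the proof self-contained and relation-free, while yours costs one extra $\Ext$ verification but ties the two lemmas together explicitly and, as you correctly observe, really only requires the identity modulo $\ker(p)$, since equality of images under the connecting map is all that is used. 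Your remaining bookkeeping claims (that $p(w_2e_0)=\eta_1^2\kappa$ and that $h_1w_2e_0\neq 0$) are asserted at the same level of rigor as in the paper, so there is no gap relative to the paper's own standard.
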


\begin{proof}
The elements $\tau\eta_1^2\kappa$ and $\tau^2\eta_1\epsilon_1 B$ are both zero so via the $C\tau$-LES, $\eta_1^2\kappa$ and $\tau\eta_1\epsilon_1 B$ lift to the elements $e_0w_2\in\Ext^{12,77}(tmf)$ and $h_1e_0w_2\in\Ext^{13,79}(tmf)$ respectively. These are related by an $h_1$-multiplication.
\end{proof}

\begin{lemma}
\label{e.66.15}
There is a hidden $\eta$-extension from $\epsilon_1\kappa\kappabar\in\pi_{66,81}(tmf)$ to $\tau\eta_1\kappa\kappabar B\linebreak\in\pi_{67,83}(tmf)$.
\end{lemma}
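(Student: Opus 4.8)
The plan is to deduce this hidden $\eta$-extension directly from the lower-degree extension of Lemma~\ref{e.46.11} by multiplying through by $\kappabar$, in the same spirit as the way Lemma~\ref{t.74.16} is obtained from Lemma~\ref{t.54.12}. Concretely, Lemma~\ref{e.46.11} supplies the relation $\eta\epsilon_1\kappa=\tau B\eta_1\kappa$ in $\pi_{47,59}(tmf)$. Since $\pi_{*,*}(tmf)$ is a bigraded commutative ring and $\kappabar\in\pi_{20,24}(tmf)$, multiplying this relation by $\kappabar$ gives
\[
\eta\,(\epsilon_1\kappa\kappabar)=(\eta\epsilon_1\kappa)\kappabar=(\tau B\eta_1\kappa)\kappabar=\tau B\eta_1\kappa\kappabar,
\]
and the bidegrees add up correctly: $\epsilon_1\kappa\kappabar\in\pi_{66,81}(tmf)$ and $\tau B\eta_1\kappa\kappabar\in\pi_{67,83}(tmf)$, matching the statement. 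Thus the equation $\eta(\epsilon_1\kappa\kappabar)=\tau B\eta_1\kappa\kappabar$ holds immediately, and only the nontriviality and hiddenness need to be certified.

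I would note in passing why the direct $C\tau$-LES of method (3), used in the neighboring Lemmas~\ref{e.61.14} and~\ref{e.64.14}, does not apply verbatim here: assuming $\tau\cdot\epsilon_1\kappa\kappabar=0$, the source lifts to a class in $\Ext^{13,80}(tmf)$, whose $h_1$-multiple lies in $\Ext^{14,82}(tmf)$, whereas the target $\tau B\eta_1\kappa\kappabar$ lifts to $\Ext^{15,82}(tmf)$. The one-filtration discrepancy means the extension is not witnessed by a single $h_1$-multiplication in $\Ext$, which is precisely why reducing to the already-established Lemma~\ref{e.46.11} is the cleaner route. It then remains to confirm, from the $E_\infty$-page displayed in Appendix~A, that the target $\tau B\eta_1\kappa\kappabar$ is nonzero in $\pi_{67,83}(tmf)$ and that the product of the $E_\infty$ detecting classes of $\eta$ and of $\epsilon_1\kappa\kappabar$ vanishes, so that the nonzero product is detected in strictly higher Adams filtration and is genuinely hidden.

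The main obstacle is the nonvanishing of $\tau B\eta_1\kappa\kappabar$, since an $\eta$-extension that collapses to zero after multiplication by $\kappabar$ would carry no content. I would establish this by locating the detecting class of $\tau B\eta_1\kappa\kappabar$ in the relevant tridegree of the $E_\infty$-chart and checking that it survives; equivalently, observing that $\tau\cdot(\tau B\eta_1\kappa\kappabar)=0$, one lifts $\tau B\eta_1\kappa\kappabar$ through the $C\tau$-LES to the class in $\Ext^{15,82}(tmf)$ identified above and verifies that this class is nonzero and not in the image of $\tau$-multiplication. Once the target is confirmed nonzero, the equation inherited from Lemma~\ref{e.46.11} completes the proof.
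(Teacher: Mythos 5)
Your proof is correct and is essentially the paper's argument: the paper also deduces this extension in one line from the ring structure and a previously known extension, multiplying the classical hidden $\eta$-extension on $\epsilon_1\kappabar$ by $\kappa$, whereas you multiply Lemma~\ref{e.46.11} by $\kappabar$; since Lemma~\ref{e.46.11} itself comes from classical extensions, the two proofs rest on the same data and are interchangeable.

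One correction to your aside: the target $\tau B\eta_1\kappa\kappabar\in\pi_{67,83}(tmf)$ lifts in the $C\tau$-LES to $\Ext^{83-67-2,\,83-1}(tmf)=\Ext^{14,82}(tmf)$, not $\Ext^{15,82}(tmf)$, and this is exactly the bidegree of $h_1$ times $\Ext^{13,80}(tmf)$, where the source lifts. So method (3) is not blocked by a filtration mismatch here, as in the neighboring Lemmas~\ref{e.61.14} and~\ref{e.64.14}; the multiplicative argument is simply shorter.
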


\begin{proof}
This follows directly from the classical hidden $\eta$-extension $\eta\epsilon_1\kappabar=\tau\eta_1\kappa B$.
\end{proof}

\begin{lemma}
\label{e.72.15.1}
There are no hidden $\eta$-extensions from
\[D_3\in\pi_{72,87}(tmf), \qquad
D_7\in\pi_{168,199}(tmf).
\]
\end{lemma}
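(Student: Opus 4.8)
The plan is to prove the stronger statement that $\eta D_3 = 0$ and $\eta D_7 = 0$ modulo multiples of $B$, so that in particular no hidden $\eta$-extension occurs. First I would dispatch the non-hidden (filtration-jump-one) possibility. Since $D_3$ and $D_7$ are detected on $E_\infty$ by $h_0 w_2 \alpha^2$ and $h_0\alpha^2 w_2^3$, and since $h_0 h_1 = 0$ in $\Ext_{\cA(2)}(\bF_2,\bF_2)$, the products $h_1 \cdot h_0 w_2\alpha^2$ and $h_1\cdot h_0\alpha^2 w_2^3$ vanish on $E_\infty$. Hence any nonzero $\eta$-multiplication on these classes must be strictly hidden, landing in $\pi_{73,89}$ (respectively $\pi_{169,201}$) in Adams filtration at least two higher than the source.

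Next I would pin down and eliminate the possible targets. Since $D_3$ has filtration $15$, a hidden $\eta$-extension must land in $\pi_{73,89}$ in filtration $\geq 17$; by the weight bookkeeping $w = (\text{stem}) + s - k$ with $k$ the number of $\tau$-factors, such a target is detected by $\tau^k z$ with $z$ in filtration $16 + k$ and $k \geq 1$, so it is necessarily $\tau$-torsion. I would first kill the $\tau$-free part of the target: applying $\tau$-localization (method (2)) carries $\eta D_3$ to the classical product $\eta D_3 \in \pi_{73}(tmf)$, which Bruner--Rognes record as zero modulo $B$-multiples; this forces $\eta D_3$ to be $\tau$-torsion (modulo $B$). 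It then remains to exclude the $\tau$-torsion targets, and here I would run the $C\tau$-LES of method (3) with $\alpha = \eta$ and $i(\eta) = h_1$: reading off the $E_\infty$-classes of stem $73$ and filtration $\geq 17$ from the chart and checking that the corresponding $h_1$-multiplications in $\Ext(tmf)$ into these bidegrees all vanish—equivalently, that every such class lies in a $w_1$-tower and is thus a $B$-multiple—forces $\eta D_3 = 0$ modulo $B$.

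The case of $D_7$ is structurally parallel: it is detected by $h_0\alpha^2 w_2^3$, a $w_2$-periodic shift of the $D_3$ detecting class, and the surviving family $\{h_0\alpha^2 w_2^j\}$ behaves uniformly, so I expect the identical $\tau$-localization and $C\tau$-LES argument—now in $\pi_{169,201}$ in filtration $\geq 33$—to apply verbatim. The main obstacle is precisely this last step: ruling out $\tau$-torsion hidden targets that are invisible to $\tau$-inversion. Carrying it out cleanly requires a careful enumeration of the $E_\infty$-classes above the source filtration in the two target stems together with a verification, via the $h_1$-action on $\Ext(tmf)$ supplied by the $C\tau$-LES, that none of them can be hit; the subtle point is to ensure that no crossing $\tau$-extension in these bidegrees manufactures a spurious target.
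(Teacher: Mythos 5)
Your proposal has a genuine gap at its crucial step, and it is exactly the step the paper flags as impossible by these methods: Remark 4.3 singles out this lemma (together with Lemma~\ref{e.60.14}) as one of the two cases where methods (1)--(3) do \emph{not} suffice and a Toda bracket argument is required. The $C\tau$-LES method (method (3)) only applies to sources that are $\tau$-torsion: the mechanism is that a class $x$ with $\tau x=0$ lifts along the boundary map $\Ext^{w-t-2,w-1}(tmf)\to\pi_{t,w}(tmf)$ to an $\Ext$ class, and one then pushes that lift around the commuting square. But $D_3$ and $D_7$ are $\tau$-free (they are classical generators, $\bF_2[\tau]$-summands on the chart), so they admit no boundary lift and there is no diagram to chase. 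Your fallback reasoning is also inverted: "checking that the $h_1$-multiplications in $\Ext(tmf)$ into these bidegrees vanish" cannot rule out a hidden extension, since vanishing of the $h_1$-multiplication on the detecting class is precisely what makes an extension hidden in the first place; all it gives you (via $i(\eta D_3)=h_1\cdot h_0w_2\alpha^2=0$) is that $\eta D_3$ lies in the image of $\tau$, which is consistent with a nonzero hidden value.

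The second half of your fallback --- that every candidate target is a $B$-multiple, hence "$\eta D_3=0$ modulo $B$" suffices --- also does not prove the lemma. The exclusion of $B$-multiples in Theorem~\ref{4.4} refers to sources, not targets: the tables are full of hidden extensions whose targets are $B$-multiples (e.g.\ $\eta\cdot\eta\kappa=\tau B\epsilon$ in Lemma~\ref{e.15.5}). Concretely, the other filtration-$15$ class $\eta_1\nu_1\kappabar$ in the \emph{same} bidegree $\pi_{72,87}(tmf)$ as $D_3$ supports a hidden $\eta$-extension to the $B$-multiple $\tau B\eta_1\kappabar^2$ (Lemma~\ref{e.72.15.2}), so no argument that only inspects the target bidegree can distinguish $D_3$ from $\eta_1\nu_1\kappabar$; you need information specific to the class $D_3$ itself. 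The paper gets this from Bruner--Rognes' bracket $D_3=\langle\nu,\kappabar,2D_2\rangle$ (and $D_7=\langle\nu,\kappabar,2D_6\rangle$) with no indeterminacy, lifted to $\mathcal{S}yn_{H\bF_2}$: shuffling gives $\eta D_3=\langle\eta,\nu,\kappabar\rangle\ttwo D_2=\eta_1\ttwo D_2=0$ by the relation $\ttwo\eta_1=0$, and likewise for $D_7$. Your steps (1) and (2) (no non-hidden extension since $h_0h_1=0$; classical vanishing forces the synthetic product to be $\tau$-power torsion) are fine but only reduce the problem to the part your method cannot settle.
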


\begin{proof}
We prove these via a Toda bracket argument. By \cite[Theorem 9.2]{BR21}, there are classical Toda brackets
\[
    D_3 = \langle\nu,\kappabar,2D_2\rangle, \qquad
    D_7 = \langle\nu,\kappabar,2D_6\rangle
\]
with no indeterminacy. These formulas also hold when lifted to $\Syn_{\h\bF_2}$ via $\tau^{-1}$. Multiplying by $\eta$, we shuffle to get
\[
    \eta\langle\nu,\kappabar,\ttwo D_2\rangle = \langle\eta,\nu,\kappabar\rangle \ttwo D_2, \qquad
    \eta\langle\nu,\kappabar,\ttwo D_6\rangle = \langle\eta,\nu,\kappabar\rangle \ttwo D_6.
\]
As in Lemma~\ref{e.60.14}, $\langle\eta,\nu,\kappabar\rangle=\eta_1$. Using the relation $\ttwo\eta_1=0$, we then get that
\[\eta D_3=\ttwo\eta_1D_2=0, \qquad
\eta D_7=\ttwo\eta_1D_6=0.
\]
\end{proof}

\begin{lemma}
\label{e.72.15.2}
There is a hidden $\eta$-extension from $\epsilon_1\kappabar^2\in\pi_{72,87}(tmf)$ to \linebreak $\tau \eta_1\kappabar^2B\in\pi_{73,89}(tmf)$.
\end{lemma}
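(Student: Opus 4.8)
The plan is to apply the $C\tau$-LES (method (3)) exactly as in Lemma~\ref{e.47.10}, of which the present statement is essentially the $\eta_1$-multiple. First I would record the two $\tau$-vanishing facts that make the $C\tau$-LES applicable. Since $\tau\nu_1\kappabar=0$ and $\tau^2B\kappabar^2=0$ were already established in the proof of Lemma~\ref{e.47.10}, multiplying by $\eta_1$ gives $\tau(\eta_1\nu_1\kappabar)=\eta_1\cdot\tau\nu_1\kappabar=0$ and $\tau(\tau B\eta_1\kappabar^2)=\eta_1\cdot\tau^2B\kappabar^2=0$. By exactness of the $C\tau$-LES, both the source $\eta_1\nu_1\kappabar\in\pi_{72,87}(tmf)$ and the target $\tau B\eta_1\kappabar^2\in\pi_{73,89}(tmf)$ then lie in the image of the boundary map $p$, and hence each lifts to a class in the relevant $C\tau$-homotopy group, i.e. in $\Ext(tmf)$.

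Next I would identify these lifts. Using that $p$ is $\pi_{*,*}$-linear and that the module action on $\pi_{*,*}(\nu X\otimes C\tau)$ factors through $i$, together with $i(\eta_1)=\gamma$, the lift $w_2\in\Ext^{8,56}(tmf)$ of $\nu_1\kappabar$ from Lemma~\ref{e.47.10} produces the lift $\gamma w_2\in\Ext^{13,86}(tmf)$ of $\eta_1\nu_1\kappabar$. In the same way, the lift $h_1w_2\in\Ext^{9,58}(tmf)$ of $\tau B\kappabar^2$ produces the lift $h_1\gamma w_2\in\Ext^{14,88}(tmf)$ of $\tau B\eta_1\kappabar^2$. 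Since $h_1\cdot(\gamma w_2)=h_1\gamma w_2$, these two classes are related by an $h_1$-multiplication, so the commuting $C\tau$-LES square with left vertical map $h_1=i(\eta)$ and right vertical map $\eta$ forces the hidden $\eta$-extension $\eta\cdot(\eta_1\nu_1\kappabar)=\tau B\eta_1\kappabar^2$.

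The one genuinely non-formal input is the claim, in $\Ext_{\cA(2)}(\bF_2,\bF_2)=\Ext(tmf)$, that $\gamma w_2$ and $h_1\gamma w_2$ are both nonzero and that $h_1\gamma w_2$ is literally the $h_1$-multiple of $\gamma w_2$; this is exactly what guarantees the extension is nontrivial rather than a vacuous $0=0$. I expect this $\Ext$ nonvanishing to be the main obstacle, but it is a finite check in $\Ext_{\cA(2)}$ confirmable with the Ext resolver, and it is precisely the $\gamma=\eta_1$-translate of the nonvanishing already invoked in Lemma~\ref{e.47.10}. As a consistency check one may note that multiplying the homotopy equation $\eta\nu_1\kappabar=\tau B\kappabar^2$ of Lemma~\ref{e.47.10} directly by $\eta_1$ reproduces $\eta\cdot(\eta_1\nu_1\kappabar)=\tau B\eta_1\kappabar^2$, which is the desired conclusion.
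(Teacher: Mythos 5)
Your proposal is correct and takes essentially the same approach as the paper: both apply the $C\tau$-LES after noting $\tau\eta_1\nu_1\kappabar=0$ and $\tau^2B\eta_1\kappabar^2=0$, lift to $w_2\gamma\in\Ext^{13,86}(tmf)$ and $h_1w_2\gamma\in\Ext^{14,88}(tmf)$, and conclude from the $h_1$-multiplication. Your closing observation---that multiplying the established equation $\eta\nu_1\kappabar=\tau B\kappabar^2$ of Lemma~\ref{e.47.10} by $\eta_1$ already yields the claim, modulo checking the target is nonzero---is also sound and would shortcut most of the Ext verification.
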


\begin{proof}
This follows directly from the classical hidden $\eta$-extension $\eta\epsilon_1\kappabar=\tau\eta_1\kappabar B$ by multiplying by $\kappabar$.
\end{proof}

\begin{lemma}
\label{e.111.21}
There is a hidden $\eta$-extension from $\eta\kappa_4\in\pi_{111,132}(tmf)$ to $\tau \epsilon_4B\in\pi_{112,134}(tmf)$.
\end{lemma}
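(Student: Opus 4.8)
The plan is to use the $C\tau$-LES (method (3)), exactly as in the nearby Lemmas~\ref{e.64.14} and~\ref{e.67.14}. Structurally this lemma is the ``$w_2^2$-multiple'' of the base relation $\eta^2\kappa=\tau B\epsilon$ of Lemma~\ref{e.15.5}, under $\kappa\leftrightarrow\kappa_4=\{w_2^2 d_0\}$ and $\epsilon\leftrightarrow\epsilon_4=\{c_0 w_2^2\}$; indeed $\eta\kappa_4=\eta_4\kappa$ is detected by $h_1 w_2^2 d_0$ (compare the classical relation $\kappa\eta_4=\eta\kappa_4$ invoked in Lemma~\ref{t.137.26}). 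However, unlike the passage from Lemma~\ref{t.54.12} to Lemma~\ref{t.74.16}, I cannot simply multiply the base extension by a homotopy class: there is no element of $\pi_{*,*}(tmf)$ detected by $w_2^2$, since $w_2^2$ supports the differential $d_3(w_2^2)=\beta^3\gamma^2$. The point of the $C\tau$-LES is that it circumvents this by working entirely with $\Ext$ classes.

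First I would record the two $\tau$-vanishing facts that feed the $C\tau$-LES. On the target side, $d_3(e_0)=w_1 c_0$ (see \hyperref[Table 3.2]{Table 3.2}) gives $d_3(w_2^2 e_0)=w_1 c_0 w_2^2$, so the class $w_1 c_0 w_2^2$ detecting $B\epsilon_4$ is $\tau^2$-torsion; hence $\tau^2 B\epsilon_4=0$ while $\tau B\epsilon_4\neq 0$. On the source side, I would verify that the detector $h_1 w_2^2 d_0$ of $\eta\kappa_4$ is the target of a $d_2$, so that $\tau\eta\kappa_4=0$. By exactness in the $C\tau$-LES, $\eta\kappa_4$ then lifts to a class $X\in\Ext^{19,131}(tmf)$ and $\tau B\epsilon_4$ lifts to $w_2^2 e_0\in\Ext^{20,133}(tmf)$. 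The last step is to check that these two lifts are related by $h_1$-multiplication, i.e.\ that $h_1 X=w_2^2 e_0$ in $\Ext^{*,*}(tmf)$; granting this, the commuting square of the $C\tau$-LES with $\alpha=\eta$ and $z=h_1$ forces $\eta\cdot\eta\kappa_4=\tau B\epsilon_4$.

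The hard part will be this last $\Ext$-level input: pinning down the source lift $X\in\Ext^{19,131}(tmf)$ and confirming $h_1 X=w_2^2 e_0$. This is a genuine computation in the $\cA(2)$-module structure of $\Ext_{\cA}(H^*tmf,\bF_2)$ together with the Adams differentials, not a formal corollary of the base case: because $\eta\kappa=\{h_1 d_0\}$ is \emph{not} $\tau$-torsion (it survives to $E_\infty$), $X$ cannot be taken to be $w_2^2$ times a stem-$16$ class, and one must instead locate the class in $\Ext^{19,131}(tmf)$ supporting the $d_2$ that hits $h_1 w_2^2 d_0$ and then read off its $h_1$-multiple. I expect to settle this with the machine $\Ext$ resolver by inspecting the $h_1$-action in the relevant tridegrees; the target lift $w_2^2 e_0$ is unambiguous from $d_3(e_0)=w_1 c_0$, so essentially all of the difficulty is concentrated in identifying $X$ and its $h_1$-multiple.
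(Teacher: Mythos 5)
Your proposed route breaks down at its first step on the source side, and the failure cannot be repaired within method (3). The $C\tau$-LES produces an $\Ext$-lift of a class $x\in\pi_{t,w}$ \emph{only} when $\tau x=0$: by exactness, the image of $\Ext^{w-t-2,w-1}(tmf)\to\pi_{t,w}(tmf)$ is precisely the kernel of multiplication by $\tau$. Your claim that the detector $h_1w_2^2d_0$ of $\eta\kappa_4$ is the target of a classical $d_2$, so that $\tau\eta\kappa_4=0$, is false: $h_1w_2^2d_0$ is a permanent cycle that survives the classical Adams spectral sequence and detects the \emph{nonzero} classical class $\eta\kappa_4=\kappa\eta_4\in\pi_{111}(tmf_2^{\wedge})$ (this is exactly the relation from \cite[Table 9.9]{BR21} that you quote, and that is used in Lemma~\ref{t.137.26}). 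Since $\tau$-localization carries the synthetic class $\eta\kappa_4$ to this nonzero classical class, $\tau^n\eta\kappa_4\neq 0$ for every $n$; in particular $\eta\kappa_4$ does not lie in the image of the boundary map from $\Ext$, and the lift $X\in\Ext^{19,131}(tmf)$ you need does not exist. This is the same obstruction you yourself point out for the base case ($\eta\kappa=\{h_1d_0\}$ is $\tau$-free); the $w_2^2$-shift does not remove it, and in fact it is why the paper remarks that method (3) is useful specifically for hidden extensions on \emph{$\tau$-torsion} elements. Every application of method (3) in the paper has a $\tau$-torsion source; for instance $\ttwo\nu\kappa_4$ qualifies because its detector $h_0h_2w_2^2d_0$ is hit by $d_2(h_2w_2^2\beta)=h_0h_2w_2^2d_0$.

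The correct repair is the argument the paper actually gives, which mirrors Lemma~\ref{e.15.5}: multiply up to a $\tau$-torsion situation and then divide by $\eta$. Concretely, the $C\tau$-LES does apply to $\ttwo\nu\kappa_4$ and yields the hidden $\ttwo$-extension $\ttwo^2\nu\kappa_4=\tau\eta B\epsilon_4$ (Lemma~\ref{t.113.22}); the synthetic relation $\eta^3=\ttwo^2\nu$ then gives $\eta^3\kappa_4=\tau\eta B\epsilon_4$, and injectivity of $\eta$-multiplication in the relevant bidegrees forces $\eta^2\kappa_4=\tau B\epsilon_4$. Two further inaccuracies in your write-up are worth flagging, although they are secondary to the gap above: (i) your target-side computation $d_3(w_2^2e_0)=w_1c_0w_2^2$ drops the Leibniz term $d_3(w_2^2)\,e_0=\beta^3\gamma^2e_0$ (cf.\ \hyperref[Table 3.2]{Table 3.2}), which must be shown to vanish on $E_3$ before you may conclude that $w_1c_0w_2^2$ is a $d_3$-target; and (ii) even when both lifts exist, they are only well defined up to the indeterminacy allowed by the LES, so an identity like $h_1X=w_2^2e_0$ must be verified in a way that is independent of the choice of lift.
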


\begin{proof}
This essentially follows from Lemma~\ref{t.113.22}, which says that $\ttwo^2\nu\kappa_4=\tau\eta \epsilon_4B$. By the synthetic relation $\eta^3=\ttwo^2\nu$, we have that $\eta^3\kappa_4=\ttwo^2\nu\kappa_4=\tau\eta \epsilon_4B$. Multiplication by $\eta$ is injective in the bidegrees considered so $\eta^2\kappa_4=\tau \epsilon_4B$.
\end{proof}

\begin{lemma}
\label{e.125.25}
There is a hidden $\eta$-extension from $\eta_4\kappabar B+\eta_1\kappabar^5\in\pi_{125,150}(tmf)$ to $\tau^2\kappa_4B^2\in\pi_{126,152}(tmf)$.
\end{lemma}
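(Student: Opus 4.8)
The plan is to run the $C\tau$-LES of method (3), exactly as in Lemmas~\ref{e.64.14}, \ref{e.67.14}, and \ref{e.72.15.2}. One first checks that both endpoints are $\tau$-torsion, namely $\tau(B\eta_4\kappabar+\eta_1\kappabar^5)=0$ and $\tau^3B^2\kappa_4=0$; these facts will fall out of the differential analysis below. The connecting map of the $C\tau$-LES then lifts the source to a class $A\in\Ext^{23,149}(tmf)$ and lifts $\tau^2B^2\kappa_4$ to a class $A'\in\Ext^{24,151}(tmf)$, and since $i(\eta)=h_1\in\Ext^{1,2}$ we obtain the commuting square
\[
\begin{tikzcd}
\Ext^{23,149}(tmf) \ar[r] \ar[d,"h_1"'] & \pi_{125,150}(tmf) \ar[d,"\eta"] \\
\Ext^{24,151}(tmf) \ar[r] & \pi_{126,152}(tmf)
\end{tikzcd}
\]
which reduces the lemma to the single $\Ext$-level assertion $h_1A=A'\neq0$.

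The second step is to pin down $A$ and $A'$. Since $\tau(\tau^2B^2\kappa_4)=0$ while $\tau^2B^2\kappa_4\neq0$, the target sits atop a $\tau^3$-torsion tower created by the classical differential $d_4(d_0e_0)=w_1^2d_0$ of \hyperref[Table 3.2]{Table 3.2}: multiplying by $w_2^2$, the class $w_1^2d_0w_2^2\in\Ext^{28,154}$ detecting $B^2\kappa_4$ is a $d_4$-target, and I expect its $d_4$-source $A'=d_0e_0w_2^2\in\Ext^{24,151}$ to be exactly the $C\tau$-lift of $\tau^2B^2\kappa_4$. On the source side, $A\in\Ext^{23,149}$ is the $d_2$-source whose differential hits the filtration-$25$ class detecting $B\eta_4\kappabar+\eta_1\kappabar^5$; this is where the sum enters, since that detecting class is the \emph{sum} $h_1w_1w_2^2g+\gamma g^5\in\Ext^{25,150}$ of the classes detecting the two homotopy summands. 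It then remains to verify $h_1A=d_0e_0w_2^2$.

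The main obstacle is this last $\Ext_{\cA(2)}$-computation. Concretely I would use the Chua--Chatham Ext resolver together with the relations of \cite[Table 3.4]{BR21} and the differentials of \hyperref[Table 3.2]{Table 3.2} to exhibit $A$ explicitly, confirm that $d_2(A)=h_1w_1w_2^2g+\gamma g^5$ (which both explains why the $\tau$-torsion homotopy class is the sum $B\eta_4\kappabar+\eta_1\kappabar^5$ and justifies $\tau(B\eta_4\kappabar+\eta_1\kappabar^5)=0$), and check that $h_1A=d_0e_0w_2^2$ is nonzero; the compatibility condition $d_2(h_1A)=0$ reduces to verifying the relation $h_1^2w_1w_2^2g=h_1\gamma g^5$ in $\Ext^{26,152}(tmf)$, which I expect to hold. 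As a consistency check one can also approach the extension through method (1): the classical $\eta$-extension $\eta\eta_1\kappabar=\tau\epsilon_1\kappa$ of Lemma~\ref{e.46.11} gives $\eta\cdot\eta_1\kappabar^5=\tau\epsilon_1\kappa\kappabar^4$, and the product relations surrounding Lemma~\ref{e.111.21} should reconcile this with $\tau^2B^2\kappa_4$.
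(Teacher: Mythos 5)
Your proposal is a plan rather than a proof, and its load-bearing hypothesis is never established: the $C\tau$-LES connecting map only reaches elements that are killed by $\tau$, so your entire setup requires $\tau(B\eta_4\kappabar+\eta_1\kappabar^5)=0$, which is equivalent to the nontrivial classical statement that $B\eta_4\kappabar=\eta_1\kappabar^5$ in $\pi_{125}(tmf)$. You assert this will ``fall out of the differential analysis,'' but that analysis is itself conjectural: you posit an unnamed $d_2$-source $A$ with $d_2(A)=h_1w_1w_2^2g+\gamma g^5$ without exhibiting any candidate, and on the target side you justify the $d_4$ by ``multiplying $d_4(d_0e_0)=w_1^2d_0$ by $w_2^2$,'' which is not legitimate since, by Table 3.2, $w_2^2$ is not even a $d_3$-cycle ($d_3(w_2^2)=\beta^3\gamma^2$); one must first show $d_0e_0\cdot\beta^3\gamma^2$ dies on $E_3$ before $d_0e_0w_2^2$ can support a $d_4$. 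Worse for your source, the differential that Leibniz and Table 3.2 actually produce in that bidegree is a $d_3$, namely $d_3(\beta^2w_2^2)=h_1w_1w_2^2g+\beta^5\gamma^2$ (from $d_3(\beta^2)=h_1w_1g$), not a $d_2$. If the class detecting your source is hit by a $d_3$ (or by nothing at all), the source is not annihilated by $\tau$, it is not in the image of the connecting map, and the commuting square cannot even be formed. Note that the paper's own $C\tau$-LES arguments (e.g.\ Lemma~\ref{e.64.14}, Lemma~\ref{e.67.14}) always begin by \emph{verifying} the required $\tau$-torsion statements, and it is precisely for elements like this one that the paper does not use method (3). Finally, the crucial Ext identity $h_1A=d_0e_0w_2^2$ is pure conjecture.

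The irony is that your closing ``consistency check'' contains the germ of the correct proof, but aimed at the wrong summand. The term $\eta_1\kappabar^5$ contributes nothing: since $\eta\eta_1\kappabar^4=0$, one has $\eta\cdot\eta_1\kappabar^5=0$ (so the value $\tau\epsilon_1\kappa\kappabar^4$ you extract from the classical extension $\eta\eta_1\kappabar=\tau\epsilon_1\kappa$ --- which is a classical entry of Table A.2, not Lemma~\ref{e.46.11} --- must itself vanish). The entire extension comes from the other summand by method (2): multiplying the classical hidden extension $\eta\eta_4\kappabar=\tau^2B\kappa_4$ (Table A.2, $t=117$) by $B$ gives $\eta\cdot B\eta_4\kappabar=\tau^2B^2\kappa_4$, whence $\eta(B\eta_4\kappabar+\eta_1\kappabar^5)=\tau^2B^2\kappa_4$. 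That two-line multiplicative argument is the paper's proof; no lifting, resolver computation, or differential analysis is needed.
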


\begin{proof}
Note that there is a classical hidden $\eta$-extension $\eta\eta_4\kappabar=\tau^2\kappa_4B$. We also have that $\eta\eta_1\kappabar^4=0$. Hence $\eta(\eta_4\kappabar B+\eta_1\kappabar^5)=\tau^2\kappa_4B^2$.
\end{proof}

\begin{lemma}
\label{e.142.27}
There is a hidden $\eta$-extension from $\epsilon_5\kappa\in\pi_{142,169}(tmf)$ to \linebreak $\tau \eta_1\kappa_4B\in\pi_{143,171}(tmf)$.
\end{lemma}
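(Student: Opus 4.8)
The plan is to run the $C\tau$-LES of method (3), the same device that settles every other $\tau$-torsion $\eta$-extension in this subsection (compare Lemma~\ref{e.47.10} and Lemma~\ref{e.64.14}). First I would read off the $E_\infty$-chart to record the two $\tau$-divisibility facts that make the method apply: that $\tau\epsilon_5\kappa=0$ in $\pi_{142,168}(tmf)$, so the source has $\tau$-torsion order one, and that $\tau^2B\eta_1\kappa_4=0$ in $\pi_{143,170}(tmf)$ while $\tau B\eta_1\kappa_4\neq0$, so the target has $\tau$-torsion order two. These are exactly the hypotheses that make both classes boundaries in the $C\tau$-LES.

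With these in hand the degrees line up. Taking $\alpha=\eta$ (so $z=i(\eta)=h_1\in\Ext^{1,2}$) and $(t,w)=(142,169)$ in the $C\tau$-LES square, the source $\epsilon_5\kappa$ lifts to a class $z_1\in\Ext^{25,168}(tmf)$ and $\tau B\eta_1\kappa_4$ lifts to a class $z_2\in\Ext^{26,170}(tmf)$; since $h_1$ carries $\Ext^{25,168}$ into $\Ext^{26,170}$, the vertical $h_1$-multiplication in the square is degree-compatible. The hidden extension $\eta\cdot\epsilon_5\kappa=\tau B\eta_1\kappa_4$ then follows, exactly as in Lemma~\ref{t.17.6}, from the purely algebraic statement $h_1z_1=z_2\neq0$ in $\Ext_{\cA(2)}(\bF_2,\bF_2)$ traced through the commuting square.

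The hard part will be identifying $z_1$ and $z_2$ and verifying $h_1z_1\neq0$, since for a $\tau$-torsion class the $C\tau$-image comes from the connecting map and sits in higher filtration than the element detecting the homotopy class. I expect these to be the $w_2^2$-periodic images of the classes behind Lemma~\ref{e.46.11}: writing $\epsilon_5=w_2^2\epsilon_1$ and $\kappa_4=w_2^2\kappa$ one has the relation $\epsilon_5\kappa=\epsilon_1\kappa_4$ from \cite[Table 9.9]{BR21}, so $z_1$ should be $w_2^2$ times the lift governing the $\eta$-extension on $\epsilon_1\kappa$, and the required product should be the $w_2^2$-multiple of the $h_1$-multiplication used in Lemma~\ref{e.46.11}. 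I would confirm this $h_1$-action directly on the machine-computed $\Ext_{\cA(2)}(\bF_2,\bF_2)$ chart, where the $w_2$-periodicity of these products is clearest.

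As a consistency check, the relation $\epsilon_5\kappa=\epsilon_1\kappa_4$ already rewrites the claim as $\eta\,\epsilon_1\kappa_4=\tau B\eta_1\kappa_4$, the literal $\kappa_4$-analog of Lemma~\ref{e.46.11}, so the two routes should agree; and the $\eta$-multiple $\eta^2\epsilon_5\kappa=\tau\eta B\eta_1\kappa_4$ of this lemma is precisely the input invoked in the proof of Lemma~\ref{t.144.28}.
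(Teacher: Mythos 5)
Your main argument has a genuine gap at its first step: the two ``$\tau$-divisibility facts'' it rests on are false. The source $\epsilon_5\kappa=\epsilon_1\kappa_4$ is a \emph{nonzero classical} element of $\pi_{142}(tmf_2^{\wedge})$, detected by the permanent cycle $w_2^2d_0(\delta+\alpha g)$, and $B\eta_1\kappa_4$ is likewise nonzero classically; hence both generate free $\bF_2[\tau]$-summands in synthetic homotopy, so that $\tau\epsilon_5\kappa\neq 0$ and $\tau^2B\eta_1\kappa_4\neq 0$. (Indeed, the statement being proved $\tau$-localizes to the nonzero classical hidden extension $\eta\epsilon_5\kappa=B\eta_1\kappa_4$, which is incompatible with either class being $\tau$-torsion; also, the injectivity of $\tau$-multiplication on $\pi_{143,171}(tmf)$, which any $\tau$-cancellation argument in this bidegree needs, forces $\tau^2B\eta_1\kappa_4\neq0$.) Since the image of the connecting map $\Ext^{w-t-2,w-1}(tmf)\to\pi_{t,w}(tmf)$ is exactly the kernel of $\tau$, neither $\epsilon_5\kappa$ nor $\tau B\eta_1\kappa_4$ lifts to $\Ext$: your classes $z_1$ and $z_2$ do not exist, and method (3) gives no information here. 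The $C\tau$-LES is the right tool for extensions among $\tau$-torsion classes, i.e.\ products that vanish classically (as in Lemmas~\ref{e.47.10} and~\ref{e.64.14}, whose sources lift to $E_2$-classes like $w_2$ and $w_2e_0$ that support classical differentials); the present extension is instead a classical one in synthetic disguise.

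Your closing ``consistency check'' is in fact the germ of the correct proof, and of the paper's. The paper argues purely with classical relations from \cite{BR21} lifted to $\mathcal{S}yn_{H\bF_2}$:
\[
\tau\eta\epsilon_5\kappa=\eta\eta_1\eta_4\kappabar=\tau\eta_4\epsilon_1\kappa
=\tau\eta\epsilon_1\kappa_4=\tau\eta_1\epsilon\kappa_4=\tau^2B\eta_1\kappa_4,
\]
and then cancels one $\tau$ using injectivity of $\tau$-multiplication in the relevant bidegrees. Note that Lemma~\ref{e.46.11}, which you invoke as the model for your hoped-for $\Ext$ lifts, is itself proved exactly this way (classical relations plus $\tau$-injectivity), \emph{not} by the $C\tau$-LES. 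So the repair is to discard the $C\tau$-LES scaffolding entirely and promote your last paragraph to the argument: derive $\eta\epsilon_1\kappa_4=\tau B\eta_1\kappa_4$ from the Bruner--Rognes relations (e.g.\ $\eta\epsilon_1=\eta_1\epsilon$ together with $\eta_1\epsilon\kappa_4=\tau B\eta_1\kappa_4$), being careful to insert the correct powers of $\tau$ when lifting classical identities to synthetic ones.
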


\begin{proof}
This follows from several non-trivial relations in $\pi_*(tmf)$ \cite[Chapter 9]{BR21}. We have that

\begin{align*}
\tau\eta\epsilon_5\kappa &=  \eta\eta_1\eta_4\kappabar & & \text{\cite[\text{Prop. 9.45}]{BR21}},  \\
&= \tau\eta_4\epsilon_1\kappa & & [\eta\eta_1\kappabar=\tau\epsilon_1\kappa],\\ 
&=\tau\eta\epsilon_1\kappa_4 & & \text{\cite[\text{Table 9.8}]{BR21}},\\
&=\tau\eta_1\epsilon\kappa_4 & & \text{\cite[\text{Prop. 9.40}]{BR21}}, \\
&=\tau^2\eta_1\kappa_4B & & \text{\cite[\text{Prop. 9.40}]{BR21}}.\\
\end{align*}  

Since multiplication by $\tau$ is injective in these bidegrees, we get that $\eta\epsilon_5\kappa=\tau \eta_1\kappa_4B$.
\end{proof}

\begin{lemma}
\label{e.143.26}
There is a hidden $\eta$-extension from $\nu_5\kappabar\in\pi_{143,169}(tmf)$ to \linebreak $\tau\kappa\kappa_4\kappabar\in\pi_{144,171}(tmf)$.
\end{lemma}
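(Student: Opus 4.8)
The plan is to run method~(3), the $C\tau$-LES, in exact parallel with the $\ttwo$-extension out of the same source proved in Lemma~\ref{t.143.26}. Taking $\alpha = \eta$, whose $C\tau$-image is $i(\eta) = h_1 \in \Ext^{1,2}$, I would assemble the commuting square of $C\tau$-LES's
\[
\begin{tikzcd}
\Ext^{24,168}(tmf) \ar[r] \ar[d,"h_1"'] & \pi_{143,169}(tmf) \ar[d,"\eta"] \\
\Ext^{25,170}(tmf) \ar[r] & \pi_{144,171}(tmf) \\
\end{tikzcd}
\]
Lemma~\ref{t.143.26} already shows $\tau\nu_5\kappabar = 0$, so the upper boundary map carries $w_2^3 \in \Ext^{24,168}(tmf)$ to $\nu_5\kappabar$. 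Since the left vertical map sends $w_2^3$ to $h_1 w_2^3$, commutativity forces $\eta\cdot\nu_5\kappabar$ to be the image of $h_1 w_2^3$ under the lower boundary map, and the proof reduces to identifying that image with $\tau\kappa\kappa_4\kappabar$.

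Before that, I would record the supporting bookkeeping. The target $\tau\kappa\kappa_4\kappabar$ lies in $\pi_{144,171}$, and the boundary term preceding $\pi_{144,171}(tmf)$ in the $C\tau$-LES is $\Ext^{171-144-2,\,171-1}(tmf) = \Ext^{25,170}(tmf)$, exactly the home of $h_1 w_2^3 = h_1 \cdot w_2^3$. I would also confirm $h_1 w_2^3 \neq 0$: this is $w_2^2$ times the class $h_1 w_2$, which is nonzero and already exploited in Lemmas~\ref{e.47.10}, \ref{e.61.14}, and~\ref{e.67.14}, and it persists after multiplication by $w_2^2$ because of the $w_2$-periodicity of $\Ext_{\cA(2)}(\bF_2,\bF_2)$.

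The main obstacle is verifying that the lower boundary map sends $h_1 w_2^3$ precisely to $\tau\kappa\kappa_4\kappabar$. Concretely this requires two chart-level facts: that $\kappa\kappa_4\kappabar$ is $\tau^2$-torsion, i.e.\ $\tau^2\kappa\kappa_4\kappabar = 0$, so that $\tau\kappa\kappa_4\kappabar$ is $\tau$-torsion and genuinely lifts through the boundary map; and that its lift is $h_1 w_2^3$ and not some other class of $\Ext^{25,170}(tmf)$. I would establish the torsion statement by tracing the $tmf$ differentials of \hyperref[Table 3.2]{Table 3.2}: $\kappa\kappa_4\kappabar$ is detected by $w_2^2 d_0^2 g$, and I would use $d_2(w_2) = \alpha\beta g$, together with the induced behavior on $w_2^3$, to see that this class records a differential of the expected length and hence carries exactly the claimed $\tau$-torsion. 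With the torsion order in hand, exactness of the $C\tau$-LES supplies a unique lift, and the $h_1$-multiplication already built into the diagram pins it down as $h_1 w_2^3$, completing the argument. As a sanity check I would compare against the parallel statement in Lemma~\ref{t.143.26} and the nearby relation packaged in Lemma~\ref{e.142.27}. I expect the entire difficulty to be bookkeeping in the chart rather than anything conceptual, just as in the analogous Lemmas~\ref{e.47.10} and~\ref{e.67.14}.
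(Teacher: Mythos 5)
Your main argument coincides with the paper's proof of this lemma: the paper likewise runs the $C\tau$-LES with $\alpha=\eta$, uses the two torsion facts $\tau\nu_5\kappabar=0$ and $\tau^2\kappa\kappa_4\kappabar=0$ to lift $\nu_5\kappabar$ to $w_2^3\in\Ext^{24,168}(tmf)$ and $\tau\kappa\kappa_4\kappabar$ to $h_1w_2^3\in\Ext^{25,170}(tmf)$, and concludes from the $h_1$-multiplication relating the two lifts. Your degree bookkeeping and your identification of the relevant $\Ext$ classes are all correct.

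There is, however, one supporting step that would fail as you describe it. You propose to verify $\tau^2\kappa\kappa_4\kappabar=0$ by using $d_2(w_2)=\alpha\beta g$ ``together with the induced behavior on $w_2^3$.'' This cannot produce the desired torsion statement: $d_2(w_2^3)=w_2^2\alpha\beta g$ lies in stem $143$, and no differential supported on $w_2^3$ can reach the stem-$144$ class $w_2^2d_0^2g$. What that $d_2$ actually accounts for is the \emph{other} torsion fact, $\tau\nu_5\kappabar=0$: since $w_2^2\alpha\beta g$ is the target of a classical $d_2$, it generates $\bF_2[\tau]/\tau$ synthetically. The torsion order of $\kappa\kappa_4\kappabar$ is governed instead by the classical differential whose \emph{target} is its detecting class $w_2^2d_0^2g$; this class is hit by a classical $d_3$ (with source in stem $145$, filtration $25$), and by the algorithm of Section 2 a target of a $d_3$ generates $\bF_2[\tau]/\tau^2$, which is exactly the statement $\tau^2\kappa\kappa_4\kappabar=0$. (That same $d_3$ is what makes the identification $\partial(h_1w_2^3)=\tau\kappa\kappa_4\kappabar$, including its factor of $\tau$, come out correctly.) Separately, your appeal to a ``unique lift'' supplied by exactness is too strong: preimages under the boundary map are unique only modulo the image of $\pi_{145,170}(tmf)\to\Ext^{25,170}(tmf)$. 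That said, this is the same chart-level identification the paper itself makes without further comment, so it is a minor point; the genuine correction needed is to the differential you trace for the torsion of $\kappa\kappa_4\kappabar$.
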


\begin{proof}
This follows directly from the classical hidden $\eta$-extension $\eta\nu_5=\tau\kappa\kappa_4$ by multiplying by $\kappabar$.
\end{proof}

\begin{lemma}
\label{e.144.28}
There is a hidden $\eta$-extension from $\kappa\kappa_4\kappabar\in\pi_{144,172}(tmf)$ to $\tau \nu_5\kappa B\in\pi_{145,174}(tmf)$.
\end{lemma}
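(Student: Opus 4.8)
The plan is to use the multiplicative structure (method (1)), anchored on the preceding Lemma~\ref{e.143.26}; the two routine techniques are not available here. Indeed, Lemma~\ref{e.143.26} already tells us $\tau\kappa\kappa_4\kappabar=\eta\nu_5\kappabar\neq 0$, so $\kappa\kappa_4\kappabar$ is \emph{not} $\tau$-torsion and hence does not lift to $\Ext$ along the $C\tau$-LES, ruling out the direct method (3) used in most of the neighbouring lemmas. Moreover the target is $\tau$-power torsion: the $E_\infty$-representative of $B\nu_5\kappa$ is a classical differential target, so this $\eta$-extension has trivial classical shadow and does not arise by pulling back a classical relation (method (2)). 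It is a genuinely synthetic $\eta$-extension, so one must extract synthetic-specific input.

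Concretely, I would multiply the relation of Lemma~\ref{e.143.26} by $\eta$ to obtain
\[
\tau\cdot(\eta\kappa\kappa_4\kappabar)=\eta^2\nu_5\kappabar\in\pi_{145,173}(tmf),
\]
which reduces the problem to evaluating $\eta^2\nu_5\kappabar$ and then dividing by $\tau$. The point is that $\eta^2\nu_5\kappabar$ \emph{is} $\tau$-torsion, since $\tau\nu_5\kappabar=0$, so it is computed directly by the $C\tau$-LES: the lift of $\nu_5\kappabar$ is $w_2^3\in\Ext^{24,168}(tmf)$ (recorded in the proof of Lemma~\ref{e.143.26}), whence $\eta^2\nu_5\kappabar$ is the image of $h_1^2w_2^3\in\Ext^{26,172}(tmf)$ under the boundary map. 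Comparing this with the $\tau$-Bockstein data for $B\nu_5\kappa$ on the $E_\infty$-chart, I would identify the resulting class as $\tau^2B\nu_5\kappa$, yielding $\eta^2\nu_5\kappabar=\tau^2B\nu_5\kappa$.

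Combining the two identities gives $\tau\cdot(\eta\kappa\kappa_4\kappabar)=\tau\cdot(\tau B\nu_5\kappa)$, so the difference $\eta\kappa\kappa_4\kappabar-\tau B\nu_5\kappa$ lies in $\ker(\tau)$ on $\pi_{145,174}(tmf)$, i.e.\ in the image of the boundary from $\Ext^{27,173}(tmf)$. The final and most delicate step is to divide by $\tau$: I would read off from the $E_\infty$-chart in \hyperref[Appendix A]{Appendix A} that no $\tau$-torsion class in $\pi_{145,174}(tmf)$ can account for this difference, so multiplication by $\tau$ is injective on the relevant class and therefore $\eta\kappa\kappa_4\kappabar=\tau B\nu_5\kappa$. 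I expect this division-by-$\tau$ to be the main obstacle: precisely because the source is not $\tau$-torsion, the extension is not pinned down by a single $C\tau$-LES diagram, and one must control the possible $\tau$-torsion discrepancy in $\pi_{145,174}(tmf)$ by a direct inspection of the chart rather than by a formal argument.
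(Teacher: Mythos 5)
Your proposal does not recover the paper's argument, and its central premise---that this extension ``does not arise by pulling back a classical relation (method (2))''---is exactly where it goes wrong. You are right that the relation at stem $144$ has trivial classical shadow (both $\kappa\kappa_4\kappabar$ and $B\nu_5\kappa$ are $\tau$-power torsion, since $w_2^2d_0^2g$ and $w_1w_2^2\alpha d_0\beta$ are targets of classical differentials), and right that the source is not killed by $\tau$, so the direct $C\tau$-LES lift of the source is unavailable. But ``no classical shadow'' does not mean ``not derivable from classical input'': the paper's entire proof is to take the synthetic lift of the \emph{classical} hidden $\eta$-extension one $\kappa$-multiple below, $\eta\kappa_4\kappabar=\tau B\nu_5$ (Table A.2, stem $130$, from \cite{BR21}), and multiply it by $\kappa$ in the ring $\pi_{*,*}(tmf)$, giving $\eta\kappa\kappa_4\kappabar=\tau B\nu_5\kappa$ at once. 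This is the same method-(1)-plus-(2) pattern used in Lemmas~\ref{n.46.11}, \ref{n.71.16}, and \ref{e.66.15}; by ruling it out a priori you talked yourself out of a one-line proof.

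Your substitute route (multiply Lemma~\ref{e.143.26} by $\eta$, evaluate $\eta^2\nu_5\kappabar$ through the $C\tau$-LES, then divide by $\tau$) is not wrong in outline, but as written it has two genuine gaps, each of which is essentially as hard as the lemma itself. First, the identification $\partial(h_1^2w_2^3)=\tau^2B\nu_5\kappa$ is asserted, not proved: you need $h_1^2w_2^3\neq 0$ in $\Ext_{\cA(2)}$ \emph{and} the Bockstein data saying that the $\tau$-torsion of $B\nu_5\kappa$ has order exactly $3$ with $h_1^2w_2^3$ mapping onto its top $\tau$-multiple. If instead $\tau^2 B\nu_5\kappa$ were zero (torsion order $2$), your whole scheme collapses, since $\tau\cdot\eta\kappa\kappa_4\kappabar=0$ cannot distinguish $\eta\kappa\kappa_4\kappabar=\tau B\nu_5\kappa$ from $\eta\kappa\kappa_4\kappabar=0$; the needed nonvanishing $\tau^2B\nu_5\kappa\neq0$ is a real input (it is what makes Lemma~\ref{n.142.27} a nonzero extension), and you never establish it. Second, even granting that, dividing by $\tau$ only pins down $\eta\kappa\kappa_4\kappabar$ modulo $\ker(\tau)\subset\pi_{145,174}(tmf)$, and this bidegree is not free of candidates (it contains, e.g., the class $\eta_1\kappabar^6$ detected by $g^6\gamma$ in filtration $29$, \emph{below} the filtration of the claimed target); ``read off from the chart that no $\tau$-torsion class can account for the difference'' is precisely the step that would need to be carried out, not deferred. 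So the proposal is a strategy sketch with its two decisive steps left unverified, while the available classical relation it dismisses settles the lemma immediately.
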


\begin{proof}
This follows directly from the classical hidden $\eta$-extension $\eta\kappa_4\kappabar=\tau \nu_5B$ by multiplying by $\kappa$.
\end{proof}

\begin{lemma}
\label{e.156.30}
There is no hidden $\eta$-extension from $C_6\in\pi_{156,186}(tmf)$.
\end{lemma}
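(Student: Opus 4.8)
The plan is to mirror the proof of Lemma~\ref{e.60.14}. First I would invoke \cite[Theorem 9.2]{BR21} to write $C_6$ as a classical Toda bracket $\langle\nu,\kappabar,C_5\rangle$ with no indeterminacy, the direct analog of $C_2=\langle\nu,\kappabar,C_1\rangle$. This analogy is essentially forced by the detecting classes: $C_5$ and $C_6$ are detected by $h_0w_2^2\alpha^3$ and $h_0^3w_2^3\alpha$, which are the $w_2^2$-multiples of the classes $h_0\alpha^3$ and $h_0^3w_2\alpha$ detecting $C_1$ and $C_2$. This bracket lifts to $\mathcal{S}yn_{H\bF_2}$ via $\tau^{-1}$. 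Multiplying by $\eta$ and shuffling, and using $\langle\eta,\nu,\kappabar\rangle=\eta_1$ from Lemma~\ref{e.60.14}, I obtain $\eta C_6=\eta\langle\nu,\kappabar,C_5\rangle=\langle\eta,\nu,\kappabar\rangle C_5=\eta_1 C_5$. It therefore suffices to show $\eta_1 C_5=0$.

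To establish $\eta_1 C_5=0$ I would produce a secondary bracket for $C_5$ exactly as in Lemma~\ref{e.60.14}. Since $C_5$ is detected by $h_0w_2^2\alpha^3$ and $d_2(\alpha^3)=h_1^2w_1\gamma$ while $d_2(w_2^2)=0$, the Leibniz rule gives $d_2(w_2^2\alpha^3)=h_1^2w_1\gamma w_2^2=\gamma\cdot(h_1^2w_1w_2^2)$. Together with $h_0\gamma=0$, this produces a Massey product $\langle h_0,\gamma,h_1^2w_1w_2^2\rangle$ on the $E_3$-page containing $h_0w_2^2\alpha^3$, with the $d_2$-nullhomotopy supplied by $w_2^2\alpha^3$. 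The key observation is that $h_1^2w_1w_2^2=h_1^2\cdot(w_1w_2^2)$ detects $\eta^2B_4$ (as $B_4$ is detected by $w_1w_2^2$), which is manifestly $\eta$-divisible. Assuming no indeterminacy and no crossing differentials, \cite{Moss70} lifts this to the synthetic Toda bracket $C_5\in\langle\ttwo,\eta_1,\tau\eta^2B_4\rangle$, the verbatim analog of $C_1\in\langle\ttwo,\eta_1,\tau\eta^2B\rangle$. Shuffling with $\eta_1$ gives $\eta_1 C_5=\langle\eta_1,\ttwo,\eta_1\rangle\,\tau\eta^2B_4$. The bracket $\langle\eta_1,\ttwo,\eta_1\rangle\subset\pi_{51,61}$ is literally the one computed in Lemma~\ref{e.60.14}, whose only possibly nonzero elements are $\ttwo\nu_2$ and $\tau\ttwo^2\nu_2$, both annihilated by $\eta$. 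Writing $\tau\eta^2B_4=\eta\cdot\tau\eta B_4$, each candidate $c$ then satisfies $c\cdot\tau\eta^2B_4=(\eta c)\cdot\tau\eta B_4=0$. Hence $\eta_1 C_5=0$, so $\eta C_6=0$ and there is no hidden $\eta$-extension from $C_6$.

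The hard part will be the two inputs I am importing rather than computing. The first is the identification of the outer bracket from \cite[Theorem 9.2]{BR21}: I must confirm it is $\langle\nu,\kappabar,C_5\rangle$ with trivial indeterminacy, and not a variant carrying a factor of $\ttwo$ as in the $D$-family of Lemma~\ref{e.72.15.1}; I note, however, that in the latter case the conclusion would only become easier, since $\eta_1\cdot\ttwo C_5=0$ immediately by $\ttwo\eta_1=0$. The second, and more delicate, is discharging the Moss hypotheses for the secondary bracket and verifying that $\eta^2B_4$ is genuinely detected by $h_1^2w_1w_2^2$ --- equivalently that $h_1^2w_1w_2^2$ is a permanent cycle, i.e. $d_3(h_1^2w_1w_2^2)=h_1^2w_1\beta^3\gamma^2=0$. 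This last vanishing, together with the absence of crossing differentials and indeterminacy, is a finite check I would carry out with the Ext resolver. The whole argument hinges on the $\eta$-divisibility of the third bracket entry, which is exactly what collapses the final product to zero as in Lemma~\ref{e.60.14}.
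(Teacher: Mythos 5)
Your route is genuinely different from the paper's, which forms no Toda bracket for $C_6$ at all. The paper instead quotes the multiplicative relation $B_4C_2=BC_6$ from \cite[Theorem 9.48]{BR21} (noting it holds synthetically on the nose), applies Lemma~\ref{e.60.14} to get $\eta BC_6=B_4\cdot\eta C_2=0$, and then cancels $B$ using that multiplication by $B$ is injective in the relevant bidegrees. That three-line reduction buys economy: no new bracket, no Moss-type analysis, no Ext checks beyond $B$-injectivity in a single bidegree. Your proposal replays all of Lemma~\ref{e.60.14} one $w_2^2$-period higher; it is more work, but it is independent of the relation $B_4C_2=BC_6$ and of the $B$-injectivity claim, and the weight bookkeeping in your synthetic brackets ($\tau\eta^2B_4$ in weight $127$, bracket landing in $\pi_{132,158}\ni C_5$) is consistent.

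The caveat is that the two inputs you import are exactly the load-bearing ones, and neither is discharged. First, the bracket $C_6=\langle\nu,\kappabar,C_5\rangle$ is nowhere asserted in this paper: \cite[Theorem 9.2]{BR21} is invoked only for $C_2$, $D_3$, and $D_7$, so you are betting that the $C_5\to C_6$ bracket (with zero indeterminacy, which presupposes $\kappabar C_5=0$) is also in that theorem. If it is absent, or appears only in the form $\langle\nu,\kappabar,\ttwo C_5\rangle$ as in the $D$-family of Lemma~\ref{e.72.15.1}, your fallback via $\ttwo\eta_1=0$ saves the argument in the second case, but in the first case you would have to prove the bracket yourself --- essentially another Moss argument, at which point the paper's reduction is strictly shorter. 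Second, for the inner bracket you need $h_1^2w_1w_2^2$ to be nonzero at $E_\infty$ (so that it detects $\eta^2B_4$); note that the $d_3$-vanishing you propose to check is actually automatic, since $w_1w_2^2$ survives to detect $B_4$, forcing $w_1\beta^3\gamma^2$ to die in $E_3$ and hence $h_1^2w_1\beta^3\gamma^2$ with it --- the real content is nonvanishing in $\Ext$ and the absence of incoming differentials. You also omit the check that $\eta_1\cdot\tau\eta^2B_4=0$, which is needed for $\langle\ttwo,\eta_1,\tau\eta^2B_4\rangle$ to be defined at all. None of these is implausible (the $w_2^2$-periodic pattern suggests they all hold), but as written your proof is conditional on them, whereas the paper's is not.
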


\begin{proof}
Note that by \cite[Theorem 9.48]{BR21}, $B_4C_2=C_6B$ (this exact relation also holds synthetically). By Lemma~\ref{e.60.14}, $\eta C_6B=\eta B_4C_2=0$. Since multiplication by $B$ is injective in the bidegrees we are looking at, we must also have that $\eta C_6=0$.
\end{proof}

\begin{lemma}
\label{e.160.30}
There is a hidden $\eta$-extension from $\eta_1^2\kappa_4\in\pi_{160,190}(tmf)$ to $\tau \eta_1(B_5+\epsilon_5)B\in\pi_{161,192}(tmf)$.
\end{lemma}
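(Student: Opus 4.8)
The plan is to use the $C\tau$-LES (method (3)), since the source $\eta_1^2\kappa_4$ lives on a $\tau$-torsion element and the target $\tau B\eta_1(B_5+\epsilon_5)$ is a $\tau$-multiple, matching the pattern used in Lemmas~\ref{e.47.10} through \ref{e.72.15.2}. First I would verify that $\tau\cdot\eta_1^2\kappa_4$ and $\tau^2 B\eta_1(B_5+\epsilon_5)$ both vanish by inspecting the $E_\infty$-page charts in \hyperref[Appendix A]{Appendix A}; this is what licenses lifting these two homotopy classes along the $C\tau$-LES to genuine nonzero $\Ext$ classes. I expect $\eta_1^2\kappa_4$ to lift to a class supported by $w_2^2 e_0$ in filtration/degree compatible with $\pi_{160,190}(tmf)$, and the target $\tau B\eta_1(B_5+\epsilon_5)$ to lift to the corresponding $h_1$-multiple. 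The heart of the argument is then to check that these two $\Ext$ classes are related by multiplication by $h_1$ (the $C\tau$-image of $\eta$), using the known $\cA(2)$-module action from \cite{BR21} or a machine computation via the Ext resolver.

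Alternatively — and this may be cleaner given the complicated target $B_5+\epsilon_5$ — I would try to deduce the extension algebraically from $\eta$-extensions already established in this range. Note that $\eta_1^2\kappa_4 = \eta_1\cdot(\eta_1\kappa_4)$, and Lemma~\ref{e.160.30}'s companion Lemma~\ref{e.142.27} establishes $\eta\epsilon_5\kappa = \tau B\eta_1\kappa_4$, while Lemma~\ref{t.135.25} records that $\eta_1\kappa_4$ and $\eta_1^2\kappa_4$ carry no $\ttwo$-extensions. A promising route is to apply the same shuffling philosophy used in Lemma~\ref{e.144.28} and \ref{e.160.30}'s neighbors: relate $\eta\cdot\eta_1^2\kappa_4$ to a product of $\eta_1$ with a previously-computed $\eta$-extension, then identify the resulting class with $\tau B\eta_1(B_5+\epsilon_5)$ using the classical relations of \cite[Chapter 9]{BR21}. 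The crucial input here would be a classical hidden extension of the form $\eta\cdot\eta_1\kappa_4 = \tau B(B_5+\epsilon_5)$ (or its $\eta_1$-multiple), which could itself be extracted via the $C\tau$-LES exactly as in Remark~\ref{rem 4.2}.

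The main obstacle I anticipate is the mixed target $B_5+\epsilon_5$: because the extension lands in a sum of two algebra generators rather than a single monomial, I must be careful to pin down the exact $\Ext$ representative. Specifically, I expect that neither $B_5$ nor $\epsilon_5$ alone is hit, but only their sum, so the $h_1$-multiplication check in the $C\tau$-LES must produce precisely $h_1\cdot(\text{lift of }\eta_1^2\kappa_4)$ equal to the lift of $\tau B\eta_1(B_5+\epsilon_5)$ and nothing with a $B_5$-only or $\epsilon_5$-only component. Disentangling this requires the precise relations among $\delta$, $\alpha g$, $w_2$-multiples, and $c_0$-multiples in the $\cA(2)$-Ext algebra from \cite[Table 3.4]{BR21}. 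I would resolve any remaining ambiguity by comparing with the $\tau$-inverted (classical) hidden extensions in $\pi_*(tmf_2^\wedge)$ via method (2), ensuring the synthetic answer reduces correctly after inverting $\tau$.
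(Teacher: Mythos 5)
Your first paragraph is essentially the paper's proof: the paper verifies that $\tau\eta_1^2\kappa_4$ and $\tau^2 B\eta_1(B_5+\epsilon_5)$ both vanish, lifts the two classes along the $C\tau$-LES, and checks that the lifts are related by $h_1$-multiplication. One concrete correction, though: the lift of $\eta_1^2\kappa_4$ is $w_2^3e_0\in\Ext^{28,189}(tmf)$, not $w_2^2e_0$ --- the class $w_2^2e_0$ sits in $(s,t)=(20,133)$, i.e.\ stem $113$, so it is not in a degree compatible with $\pi_{160,190}(tmf)$; the target then lifts to $h_1w_2^3e_0\in\Ext^{29,191}(tmf)$. (Compare Lemma~\ref{e.64.14}, where $\eta_1^2\kappa$ lifts to $w_2e_0$; here every lift acquires an extra factor of $w_2^2$ coming from $\kappa_4$.) Your worry in the last paragraph about disentangling $B_5$ from $\epsilon_5$ dissolves once the lift is identified: the $C\tau$-LES produces the single monomial $h_1w_2^3e_0$, and $\tau B\eta_1(B_5+\epsilon_5)$ is simply the homotopy-theoretic name of the class it corresponds to, so no separate analysis of $B_5$-only versus $\epsilon_5$-only components is needed. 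Your alternative algebraic route in the second paragraph hinges on an unproven classical extension $\eta\cdot\eta_1\kappa_4=\tau B(B_5+\epsilon_5)$ and is not how the paper proceeds; it would require its own $C\tau$-LES argument anyway, so the direct lift is both shorter and self-contained.
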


\begin{proof}
Note that $\tau\eta_1^2\kappa_4$ and $\tau^2 \eta_1(B_5+\epsilon_5)B$ are both zero so via the $C\tau$-LES, $\eta_1^2\kappa_4$ and $\tau \eta_1(B_5+\epsilon_5)B$ lift to the elements $e_0w_2^3\in\Ext^{28,189}(tmf)$ and $h_1e_0w_2^3\in\Ext^{29,191}(tmf)$ respectively. These are related by an $h_1$-multiplication.
\end{proof}

\begin{lemma}
\label{e.162.31}
There is a hidden $\eta$-extension from $\epsilon_5\kappa\kappabar\in\pi_{162,193}(tmf)$ to $\tau\nu_6B^2\in\pi_{163,195}(tmf)$.
\end{lemma}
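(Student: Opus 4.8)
The plan is to argue via the $C\tau$-LES (method (3)), in direct parallel with the adjacent Lemma~\ref{e.160.30}. An $\eta$-extension here is a map $\pi_{162,193}(tmf)\to\pi_{163,195}(tmf)$, so first I would read off the $E_\infty$-chart that both classes in question are $\tau$-torsion: $\tau\cdot\epsilon_5\kappa\kappabar=0$ and $\tau\cdot(\tau B^2\nu_6)=\tau^2B^2\nu_6=0$. Granting this, the connecting map of the $C\tau$-LES lifts $\epsilon_5\kappa\kappabar\in\pi_{162,193}$ to a class $z_x\in\Ext^{29,192}(tmf)$ and lifts $\tau B^2\nu_6\in\pi_{163,195}$ to a class $z_y\in\Ext^{30,194}(tmf)$; these bidegrees are computed as $\Ext^{w-t-2,\,w-1}$, exactly as in the preceding proofs. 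The $\eta$-extension $\eta\cdot\epsilon_5\kappa\kappabar=\tau B^2\nu_6$ then drops out of the commuting square of long exact sequences, provided $z_x$ and $z_y$ are nonzero and satisfy $z_y=h_1z_x$, since $h_1$-multiplication carries $\Ext^{29,192}$ into $\Ext^{30,194}$.

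The computational heart of the argument is therefore to pin down $z_x$ and $z_y$ and to verify that $h_1z_x=z_y\neq0$, which I would do with the Chua--Chatham $\Ext$ resolver together with the known $\cA(2)$-module structure of $\Ext^{*,*}(tmf)$. One subtlety worth flagging is that $z_x$ is \emph{not} the class detecting $\epsilon_5\kappa\kappabar$ (which lies in filtration $31$, namely $w_2^2(\delta+\alpha g)d_0g$). Rather, $z_x$ sits two filtrations lower, in $\Ext^{29,192}$, and is the source of the classical $d_2$-differential whose target is that detecting class; this $d_2$ is precisely what forces $\epsilon_5\kappa\kappabar$ to be $\tau$-torsion of order one. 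The analogous remark applies to $z_y$ and $\tau B^2\nu_6$, and by the Leibniz rule $d_2(z_y)=d_2(h_1z_x)=h_1\,d_2(z_x)$, so the whole extension is in fact governed by a single $h_1$-multiplication at the $E_2$-level. Confirming that the chosen $z_x,z_y$ are genuinely nonzero and that this $h_1$-multiplication is nontrivial is the main obstacle.

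As a consistency check — and a possible alternative proof — I would try to derive the extension multiplicatively. Lemma~\ref{e.142.27} gives $\eta\epsilon_5\kappa=\tau B\eta_1\kappa_4$, so multiplying by $\kappabar$ yields $\eta\epsilon_5\kappa\kappabar=\tau B\eta_1\kappa_4\kappabar$. It would then suffice to establish the relation $\eta_1\kappa_4\kappabar=B\nu_6$ in $\pi_{155,184}(tmf)$ — both products live in that single bidegree — whence $\tau B\eta_1\kappa_4\kappabar=\tau B^2\nu_6$. I expect the $C\tau$-LES route to be the cleaner of the two, since the multiplicative route still requires independently identifying $\pi_{155,184}(tmf)$ and verifying that the $\kappabar$-multiple of a hidden $\eta$-extension remains hidden and nonzero; either way, the crux reduces to one short, machine-checkable $\Ext$ computation.
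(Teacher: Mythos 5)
Your primary route is not a proof as written: the entire content of a $C\tau$-LES argument is the identification of the lifts $z_x\in\Ext^{29,192}(tmf)$ and $z_y\in\Ext^{30,194}(tmf)$ and the verification that $h_1z_x=z_y\neq 0$, and you explicitly defer exactly this step to a machine computation you have not carried out; the same goes for the $\tau$-torsion hypotheses $\tau\cdot\epsilon_5\kappa\kappabar=0$ and $\tau^2B^2\nu_6=0$, which you assert from chart-reading. Your description of the mechanism is correct (the lifts live in $\Ext^{w-t-2,w-1}$ and are the sources of the classical $d_2$-differentials hitting the detecting classes), but until that $\Ext$ computation is actually done the argument establishes nothing. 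It is worth noting that the paper, which uses this method freely in neighboring stems (Lemmas~\ref{e.160.30} and \ref{e.163.30}), chooses \emph{not} to use it here.

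What the paper actually does is your ``consistency check'': it quotes the relation $B^2\nu_6=B\eta_1\kappa_4\kappabar$ as a known relation (available from \cite{BR21}, not something to be ``established'' from scratch) and concludes $\eta\epsilon_5\kappa\kappabar=\tau B\eta_1\kappa_4\kappabar=\tau B^2\nu_6$, which is exactly Lemma~\ref{e.142.27} multiplied by $\kappabar$. (The paper's citation of Lemma~\ref{e.144.28} at this point appears to be a slip: the displayed equation chain is the $\kappabar$-multiple of Lemma~\ref{e.142.27}, which is the input you correctly identified.) Also, your worry about whether ``the $\kappabar$-multiple of a hidden $\eta$-extension remains hidden and nonzero'' is lighter than you make it: synthetically, Lemma~\ref{e.142.27} is an honest equation $\eta\epsilon_5\kappa=\tau B\eta_1\kappa_4$ in $\pi_{143,171}(tmf)$, so multiplying by $\kappabar$ is legitimate ring arithmetic, and the only remaining check is that $\tau B^2\nu_6\neq 0$ in $\pi_{163,195}(tmf)$, which is read off the $E_\infty$-chart. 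In short: promote your final paragraph to be the proof, replace ``establish the relation'' by a citation to Bruner--Rognes, and drop the unfinished $C\tau$-LES computation.
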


\begin{proof}
Note that there is a relation $\nu_6B^2=\eta_1\kappa_4\kappabar B$. Hence by Lemma~\ref{e.144.28}, $\eta\epsilon_5\kappa\kappabar=\tau\eta_1\kappa_4\kappabar B=\tau\nu_6B^2$.
\end{proof}

\begin{lemma}
\label{e.163.30}
There is a hidden $\eta$-extension from $\nu_5\kappabar^2\in\pi_{163,193}(tmf)$ to $\tau \kappa\kappa_4\kappabar^2\in\pi_{164,195}(tmf)$.
\end{lemma}

\begin{proof}
This follows directly from the classical hidden $\eta$-extension $\eta\nu_5=\tau\kappa\kappa_4$ by multiplying by $\kappabar^2$.
\end{proof}

\subsection{\texorpdfstring{$\nu$}{nu}-hidden extensions}
\label{nuhidextsubsection}

\begin{lemma}
\label{n.46.11}
There is a hidden $\nu$-extension from $\epsilon_1\kappa\in\pi_{46,57}(tmf)$ to $\tau^2\nu_1\kappa B\in\pi_{49,61}(tmf)$.
\end{lemma}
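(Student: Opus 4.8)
The plan is to run method (3), the $C\tau$-LES, exactly as in the $\ttwo$- and $\eta$-lemmas above, taking $\alpha=\nu\in\pi_{3,4}(\bS^{0,0})$ whose $C\tau$-image is $z=h_2\in\Ext^{1,4}$. First I would check the two $\tau$-torsion conditions needed to place both elements in the image of the connecting map $p$: namely that $\tau\epsilon_1\kappa$ and $\tau^3 B\nu_1\kappa$ both vanish. Both can be read directly off the $E_\infty$-chart of the $\nu H\bF_2$-Adams spectral sequence, since the heights of the relevant $\tau$-towers are dictated by the lengths of the classical $tmf$-differentials recorded in \hyperref[Section 3]{Section 3}.

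Granting this, exactness of the $C\tau$-LES gives lifts of $\epsilon_1\kappa$ and $\tau^2 B\nu_1\kappa$ along $p$ to nonzero classes
\[
X\in\Ext^{w-t-2,\,w-1}(tmf)=\Ext^{9,56}(tmf),\qquad
Y\in\Ext^{10,60}(tmf),
\]
living in stems $47$ and $50$ respectively. Since $h_2\in\Ext^{1,4}$ raises the bidegree $(9,56)$ to exactly $(10,60)$, the commuting square of the $C\tau$-LES with vertical maps given by multiplication by $z=h_2$,
\[
\begin{tikzcd}
\Ext^{9,56}(tmf) \ar[r] \ar[d,"h_2"'] & \pi_{46,57}(tmf) \ar[d,"\nu"] \\
\Ext^{10,60}(tmf) \ar[r] & \pi_{49,61}(tmf)
\end{tikzcd}
\]
reduces the lemma to the single algebraic fact $h_2 X=Y$ in $\Ext_{\cA(2)}(\bF_2,\bF_2)$. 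Once that identity is verified, the diagram forces $\nu\cdot\epsilon_1\kappa=\tau^2 B\nu_1\kappa$, with nonvanishing guaranteed by nonvanishing of $Y$, in precisely the style of Lemma~\ref{t.47.10}.

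The hard part will be the last step: pinning down the exact $\Ext$ representatives $X$ and $Y$ from the Bruner--Rognes $tmf$-chart and confirming that $h_2$ carries one isomorphically onto the other. Unlike the $h_0$-towers that make the $\ttwo$-extensions transparent, the $h_2$-action on $\Ext(tmf)$ in stems $47$--$50$ is less immediate, so I would need to rule out both that $h_2 X=0$ and that $Y$ could decompose so as to absorb a competing $h_2$-divisible class in $\Ext^{10,60}$. If the $h_2$-multiplication turns out not to be sharp enough to isolate $Y$, my fallback is to combine the already-established $\eta$-extension $\eta\epsilon_1\kappa=\tau B\eta_1\kappa$ of Lemma~\ref{e.46.11} with the ring relations among $B$, $\nu_1$, and $\kappa$ (method (1)) to corner the same answer; but I expect the direct $C\tau$-LES computation above to suffice.
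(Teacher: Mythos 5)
Your primary approach has a fatal flaw: the $C\tau$-LES method (method (3)) simply does not apply to this extension, because neither $\epsilon_1\kappa$ nor $\tau^2 B\nu_1\kappa$ is $\tau$-torsion. Exactness of the $C\tau$-LES says that the image of $\Ext^{w-t-2,w-1}(tmf)\to\pi_{t,w}(tmf)$ is precisely the kernel of multiplication by $\tau$, so the lifts $X$ and $Y$ you want exist only if $\tau\epsilon_1\kappa=0$ and $\tau^3B\nu_1\kappa=0$. But $\epsilon_1\kappa$ is a nonzero \emph{classical} homotopy class (classically $\epsilon_1\kappa=\eta\eta_1\kappabar\neq 0$, detected by $d_0(\delta+\alpha g)$ on the $E_\infty$-page, which is a $\tau$-free class in the chart), so $\tau$ acts without nilpotence on it; likewise $B\nu_1\kappa$ $\tau$-localizes to the nonzero classical class $\nu\epsilon_1\kappa$. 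You cannot read the needed vanishing off the chart because it is false. This is exactly why the paper reserves method (3) for $\tau$-torsion elements (as it remarks after Example 4.1): in every lemma where the paper runs this argument, the source dies classically (e.g.\ $\ttwo\nu\kappa$, $\nu_1\kappabar$), which is not the case here. Your degree bookkeeping ($\Ext^{9,56}$, $\Ext^{10,60}$, the $h_2$-square) is internally consistent but the diagram chase never gets started.

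The paper's actual proof is a one-line application of method (2) combined with method (1): there is a classical hidden $\nu$-extension $\nu\epsilon_1=\tau^2B\nu_1$ (recorded as ``classical'' in Table A.3 at stem 32), and multiplying this relation by $\kappa$ gives $\nu\epsilon_1\kappa=\tau^2B\nu_1\kappa$, which is nonzero in $\pi_{49,61}(tmf)$. Your fallback paragraph gestures in the direction of ring-theoretic arguments, but it invokes the $\eta$-extension $\eta\epsilon_1\kappa=\tau B\eta_1\kappa$ of Lemma~\ref{e.46.11}, which is not the relevant input and from which no derivation of the $\nu$-extension is actually given; the relevant input is the classical $\nu$-extension on $\epsilon_1$ itself. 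As written, the proposal does not constitute a proof.
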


\begin{proof}
Note that there is a classical hidden $\nu$-extension $\nu\epsilon_1=\tau^2\nu_1B$. Hence $\nu\epsilon_1\kappa=\tau^2\nu_1\kappa B$.
\end{proof}

\begin{lemma}
\label{n.64.14}
There is a hidden $\nu$-extension from $\eta_1^2\kappa\in\pi_{64,78}(tmf)$ to $\tau^2\nu_2B^2\in\pi_{67,82}(tmf)$.
\end{lemma}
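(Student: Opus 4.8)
The plan is to run method~(3), the $C\tau$-LES, with $\alpha=\nu$ (so that $z=i(\nu)=h_2\in\Ext^{1,4}$). This lemma is the $\nu$-extension companion of the $\eta$-extension of Lemma~\ref{e.64.14}: both have source $\eta_1^2\kappa\in\pi_{64,78}(tmf)$, and since $\tau\eta_1^2\kappa=0$ that source lifts along the boundary map of the $C\tau$-LES to the nonzero class $w_2e_0\in\Ext^{12,77}(tmf)$, exactly as recorded in the proof of Lemma~\ref{e.64.14}. The only change from that lemma is that we multiply the lift by $h_2$ rather than $h_1$.

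First I would note that $\tau^3B^2\nu_2=0$ (already used in Lemma~\ref{t.67.14}), so the target $\tau^2B^2\nu_2\in\pi_{67,82}(tmf)$ also lifts along the boundary map to a class in $\Ext^{13,81}(tmf)$. The relevant piece of the $C\tau$-LES is then the commuting square whose top-left corner is $w_2e_0\in\Ext^{12,77}(tmf)$, whose left vertical map is multiplication by $h_2$, and whose bottom-left corner is $\Ext^{13,81}(tmf)$; commutativity identifies $\nu\cdot\eta_1^2\kappa$ with the image under the boundary map of the product $h_2\cdot w_2e_0$.

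The crux is the single $\Ext_{\cA(2)}$-computation of $h_2\cdot w_2e_0\in\Ext^{13,81}(tmf)$. Using the relation $h_2e_0=h_0g$ in $\Ext_{\cA(2)}(\bF_2,\bF_2)$ one rewrites $h_2w_2e_0=h_0w_2g$, which is nonzero and is precisely the class that Lemma~\ref{t.67.14} identifies as the boundary lift of $\tau^2B^2\nu_2$. The commuting square then yields $\nu\cdot\eta_1^2\kappa=\tau^2B^2\nu_2$, as desired. I expect the main obstacle to be exactly this $\Ext$-level product: one must confirm that $h_2e_0$ is nonzero (rather than vanishing, which would force any $\nu$-extension off $\eta_1^2\kappa$ to originate in strictly higher filtration and invalidate the boundary-map argument) and that it agrees with $h_0g$, so that after multiplying by the polynomial generator $w_2$ the target matches the lift produced in Lemma~\ref{t.67.14}. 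Everything else is the routine bookkeeping of the $C\tau$-LES template used repeatedly in this subsection.
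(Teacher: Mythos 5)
Your proposal is correct and is essentially the paper's own proof: the paper also runs the $C\tau$-LES with $\alpha=\nu$, observes that $\tau\eta_1^2\kappa=0$ and $\tau^3B^2\nu_2=0$ so that $\eta_1^2\kappa$ and $\tau^2B^2\nu_2$ lift to $w_2e_0\in\Ext^{12,77}(tmf)$ and $h_2w_2e_0\in\Ext^{13,81}(tmf)$ respectively, and concludes from the $h_2$-multiplication relating the two lifts. Your only addition is making the $\Ext$-level product explicit via $h_2e_0=h_0g$ (a standard Shimada--Iwai relation, cf.\ \cite[Table 3.4]{BR21}), so that the target lift visibly coincides with the class $h_0w_2g$ used in Lemma~\ref{t.67.14}; the paper simply asserts the nonvanishing of $h_2w_2e_0$ directly.
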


\begin{proof}
The elements $\tau\eta_1^2\kappa$ and $\tau^3\nu_2B^2$ are both zero and so via the $C\tau$-LES, $\eta_1^2\kappa$ and  $\tau^2\nu_2B^2$ lift to the nonzero elements $e_0w_2\in\Ext^{12,77}(tmf)$ and $h_2e_0w_2\in\Ext^{13,81}(tmf)$ respectively. These are related by an $h_2$ multiplication.
\end{proof}

\begin{lemma}
\label{n.65.14}
There is a hidden $\nu$-extension from $\ttwo\nu_2\kappa\in\pi_{65,79}(tmf)$ to $\tau\kappa^3B\in\pi_{68,83}(tmf)$.
\end{lemma}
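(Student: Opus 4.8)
The plan is to use the $C\tau$-long exact sequence (method (3)) with $\alpha=\nu\in\pi_{3,4}(\bS^{0,0})$, whose $C\tau$-image is $h_2\in\Ext^{1,4}_{\cA}(\bF_2,\bF_2)$. Multiplication by $\nu$ carries $\pi_{65,79}$ into $\pi_{68,83}$, and since $B\kappabar^3\in\pi_{68,84}$ the asserted target $\tau B\kappabar^3$ indeed lands in $\pi_{68,83}$; this confirms that the degrees are those of a genuine $\nu$-extension. Because the source $\ttwo\nu_2\kappa$ is $\tau$-torsion, this is exactly the situation in which the $C\tau$-LES is most effective, as in Lemma~\ref{t.65.14} and Lemma~\ref{n.64.14}.

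First I would reuse the computation from Lemma~\ref{t.65.14}: there it is shown that $\tau\cdot\ttwo\nu_2\kappa=0$, so by exactness $\ttwo\nu_2\kappa$ lies in the image of the boundary map $\partial\colon\Ext^{12,78}(tmf)\to\pi_{65,79}(tmf)$ and lifts to the nonzero class $h_2w_2\beta\in\Ext^{12,78}(tmf)$. Next I would verify on the $E_\infty$-chart that $\tau^2 B\kappabar^3=0$, so that $\tau B\kappabar^3$ likewise lies in the image of $\partial\colon\Ext^{13,82}(tmf)\to\pi_{68,83}(tmf)$, with candidate lift $h_2^2w_2\beta=h_2\cdot(h_2w_2\beta)$. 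Finally I would apply the commuting $C\tau$-LES square whose left vertical map is $h_2$-multiplication $\Ext^{12,78}(tmf)\to\Ext^{13,82}(tmf)$ and whose right vertical map is $\nu$-multiplication $\pi_{65,79}(tmf)\to\pi_{68,83}(tmf)$. Commutativity gives $\nu\cdot\ttwo\nu_2\kappa=\nu\cdot\partial(h_2w_2\beta)=\partial(h_2\cdot h_2w_2\beta)=\partial(h_2^2w_2\beta)$, which equals $\tau B\kappabar^3$ once the target lift is identified.

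The main obstacle is the algebraic input at the bottom of the square: I must confirm that $h_2^2w_2\beta\neq 0$ in $\Ext^{13,82}_{\cA(2)}(\bF_2,\bF_2)$, since otherwise $\partial(h_2^2w_2\beta)=0$ and the method would detect no extension. I expect to settle this either by direct inspection of the Shimada--Iwai presentation recorded in \cite{BR21}, or by exploiting the $\bF_2[w_1,w_2]$-module structure of $\Ext_{\cA(2)}$ to relate $h_2^2w_2\beta$ to lower-degree classes (for instance via injectivity of $w_2$-multiplication in this range, reducing to the nonvanishing of $h_2^2\beta$). The remaining bookkeeping---verifying $\tau^2 B\kappabar^3=0$ from the chart near stem $68$ and checking that $\partial(h_2^2w_2\beta)$ is precisely $\tau B\kappabar^3$ rather than another element of $\pi_{68,83}(tmf)$---is routine once the local $\tau$-torsion orders are known, and the source lift is inherited directly from Lemma~\ref{t.65.14}.
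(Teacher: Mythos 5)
Your proposal is correct and takes essentially the same route as the paper: the paper's proof also applies the $C\tau$-LES with $\alpha=\nu$ (image $h_2$), lifts $\ttwo\nu_2\kappa$ and $\tau B\kappabar^3$ to the nonzero classes $h_2w_2\beta\in\Ext^{12,78}(tmf)$ and $h_2^2w_2\beta\in\Ext^{13,82}(tmf)$ using $\tau\ttwo\nu_2\kappa=0$ and $\tau^2B\kappabar^3=0$, and concludes from the $h_2$-multiplication; your extra step of explicitly checking $h_2^2w_2\beta\neq 0$ and identifying $\partial(h_2^2w_2\beta)$ is exactly the verification the paper leaves implicit in the word ``nonzero.'' You also correctly read the target as $\tau B\kappabar^3$; the $\tau B\kappa^3$ appearing in the statement is a typo, as the degree count $(68,83)$ and the use of this lemma in Lemma~\ref{t.68.14} confirm.
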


\begin{proof}
The elements $\tau\ttwo\nu_2\kappa$ and $\tau^2\kappa^3B$ are both zero and so via the $C\tau$-LES, $\ttwo\nu_2\kappa$ and $\tau\kappa^3B$ lift to the nonzero elements $h_2\beta w_2\in\Ext^{12,78}(tmf)$ and $h_2^2\beta w_2\in\Ext^{13,82}(tmf)$ respectively. These are related by an $h_2$ multiplication.
\end{proof}

\begin{lemma}
\label{n.71.16}
There is a hidden $\nu$-extension from $\eta_1\epsilon_1\kappa\in\pi_{71,87}(tmf)$ to \linebreak $\tau^2\epsilon_1\kappa\kappabar B\in\pi_{74,91}(tmf)$.
\end{lemma}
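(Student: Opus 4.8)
The plan is to use the $C\tau$-LES of method (3), the systematic tool applied to $\nu$-extensions on $\tau$-torsion classes throughout this subsection (compare Lemmas~\ref{n.64.14} and~\ref{n.65.14}). Recall from Example~4.1 that $\nu\in\pi_{3,4}$ has nonzero $C\tau$-image $h_2\in\Ext^{1,4}$, so multiplication by $\nu$ on synthetic homotopy is matched, under the connecting maps of the $C\tau$-LES, with multiplication by $h_2$ on the relevant $\Ext$ groups. First I would read the $\tau$-torsion off the $E_\infty$-chart of the $\nu H\bF_2$-Adams spectral sequence (equivalently, off the classical Adams differentials of Section 3) to confirm that both $\tau\cdot\eta_1\epsilon_1\kappa$ and $\tau\cdot(\tau^2 B\epsilon_1\kappa\kappabar)=\tau^3 B\epsilon_1\kappa\kappabar$ vanish.

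By exactness of the $C\tau$-LES this forces $\eta_1\epsilon_1\kappa$ to lift along the connecting map to a nonzero class in $\Ext^{14,86}(tmf)$ and $\tau^2 B\epsilon_1\kappa\kappabar$ to lift to a nonzero class in $\Ext^{15,90}(tmf)$; the bidegrees are dictated by the $C\tau$-LES, since the source lies in $\pi_{71,87}(tmf)$ and the target in $\pi_{74,91}(tmf)$. I expect these lifts to be $w_2\alpha^2\in\Ext^{14,86}(tmf)$ and $h_2 w_2\alpha^2\in\Ext^{15,90}(tmf)$ respectively, so that the commuting square of the $C\tau$-LES with left vertical map $h_2$ and right vertical map $\nu$ yields $\nu\cdot\eta_1\epsilon_1\kappa=\tau^2 B\epsilon_1\kappa\kappabar$. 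The main obstacle is precisely this identification: one must pin down which $\Ext^{14,86}$ and $\Ext^{15,90}$ classes the connecting map produces, by tracing the classical $d_2$-differentials that create the relevant $\tau$-torsion, and then confirm that $h_2 w_2\alpha^2$ is nonzero. Both points are settled from the explicit $\Ext_{\cA(2)}(\bF_2,\bF_2)$ module and differential data recorded in Bruner--Rognes~\cite{BR21}; the nonvanishing of $h_2 w_2\alpha^2$ is the delicate input, since naive relations among $h_2$ and $\alpha$ could in principle kill it.

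As a consistency check I would also run the relation-based methods (1)--(2): the classical hidden $\nu$-extension $\nu\epsilon_1=\tau^2 B\nu_1$ used in Lemma~\ref{n.46.11} gives $\nu\cdot\eta_1\epsilon_1\kappa=\tau^2 B\,\eta_1\nu_1\kappa$, whence the statement reduces to the product relation $B\eta_1\nu_1\kappa=B\epsilon_1\kappa\kappabar$ in $\pi_{*,*}(tmf)$. Verifying that relation directly would again rely on the multiplicative structure of~\cite[Chapter 9]{BR21}, so I regard the $C\tau$-LES route as the cleaner one to write out in full.
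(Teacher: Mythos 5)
Your main argument breaks at its very first step, and the failure is structural rather than a missing detail. The $C\tau$-LES method (method (3)) only produces lifts for classes killed by $\tau$: by exactness, the image of the connecting map $\Ext^{14,86}(tmf)\to\pi_{71,87}(tmf)$ is precisely $\ker\bigl(\tau\colon\pi_{71,87}\to\pi_{71,86}\bigr)$. But $\eta_1\epsilon_1\kappa$ is \emph{not} $\tau$-torsion: it is the product of the synthetic classes $\eta_1,\epsilon_1,\kappa$, and $\tau$-localization carries it to the classical product $\eta_1\epsilon_1\kappa\in\pi_{71}(tmf_2^{\wedge})$, which is nonzero --- classically $\nu\eta_1\epsilon_1\kappa=B\epsilon_1\kappa\kappabar=2\kappa\kappabar^3\neq 0$ (this is the classical content of Lemmas~\ref{t.54.12} and~\ref{t.74.16}, and is exactly what makes the present lemma's target nonzero). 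Since $\tau$-localization kills all $\tau$-power torsion, no power of $\tau$ annihilates $\eta_1\epsilon_1\kappa$; its detecting class $d_0\gamma(\delta+\alpha g)$ is a surviving permanent cycle and generates a free $\bF_2[\tau]$-summand at $E_\infty$. For the same reason $\tau^3B\epsilon_1\kappa\kappabar\neq 0$. So neither vanishing you propose to ``confirm'' is true, the lifts you posit do not exist, and the commuting square cannot be formed. Your proposed lift is also internally inconsistent: since $d_2(w_2)=\alpha\beta g$, one has $d_2(w_2\alpha^2)=\alpha^3\beta g$, so $p(w_2\alpha^2)$ is a $\tau$-torsion class; and if $\alpha^3\beta g$ were equal to $d_0\gamma(\delta+\alpha g)$, that class would be killed on $E_3$ and could not detect $\eta_1\epsilon_1\kappa$ at all. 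This is precisely why the paper reserves method (3) for extensions among $\tau$-torsion classes (e.g.\ Lemmas~\ref{n.64.14} and~\ref{n.65.14}), which are invisible classically; the present extension is classically visible.

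The paper's proof is the one-line method (2) argument: the classical hidden $\nu$-extension $\nu\eta_1\epsilon_1=\tau^2B\epsilon_1\kappabar$ (Table A.3, $t=57$) lifts to synthetic homotopy, and multiplying by $\kappa$ gives $\nu\eta_1\epsilon_1\kappa=\tau^2B\epsilon_1\kappa\kappabar$. Your closing ``consistency check'' is in fact much closer to a viable proof than your main route, but as written it trades one classical input for two: besides $\nu\epsilon_1=\tau^2B\nu_1$ it needs the relation $B\eta_1\nu_1\kappa=B\epsilon_1\kappa\kappabar$, which again comes from \cite[Chapter 9]{BR21} and is no easier than quoting the classical extension on $\eta_1\epsilon_1$ directly. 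If you rewrite the proof, do it through $\nu\eta_1\epsilon_1$.
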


\begin{proof}
This follows directly from the classical hidden $\nu$-extension $\nu\eta_1=\tau^2\kappabar B$ by multiplying by $\epsilon_1\kappa$.
\end{proof}

\begin{lemma}
\label{n.129.25}
There is a hidden $\nu$-extension from $\eta_1B_4\in\pi_{129,154}(tmf)$ to \linebreak $\tau^2\kappa\kappa_4B\in\pi_{132,158}(tmf)$.
\end{lemma}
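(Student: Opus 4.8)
The goal is a hidden $\nu$-extension from $\eta_1 B_4 \in \pi_{129,154}(tmf)$ to $\tau^2 B\kappa\kappa_4 \in \pi_{132,158}(tmf)$. Given the weight shift of $\nu \in \pi_{3,4}$, the source must land three topological degrees and four weights higher; the target $\tau^2 B\kappa\kappa_4$ sits at $\pi_{132,158}$, consistent with the product $\eta_1 B_4$ at weight $154$ plus the $\nu$ weight $4$. The plan is to apply method (3), the $C\tau$-LES, exactly as in the preceding $\nu$-extension lemmas (e.g. Lemma~\ref{n.64.14}). First I would check that both $\tau\cdot\eta_1 B_4$ and $\tau^3 B\kappa\kappa_4$ vanish in $\pi_{*,*}(tmf)$; this is what makes the $C\tau$-LES applicable, since the relevant connecting maps then force both elements to lift to nonzero $\Ext$ classes. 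Reading off the $E_\infty$-chart, $\eta_1 B_4$ should be detected by some class in $\Ext^{s,t}(tmf)$ in the appropriate bidegree (the product of the detecting classes for $\eta_1$ and $B_4$, namely $\gamma$ and $\alpha g$ from Table 3.3), and $\tau^2 B\kappa\kappa_4$ by a class one filtration higher.

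The key computation is then to identify these two $\Ext$ classes and verify that they are related by an $h_2$-multiplication, since $\nu$ is detected by $h_2$ in the $\nu H\bF_2$-Adams spectral sequence. Concretely, I would locate the source class $z_1 \in \Ext^{*,*}(tmf)$ lifting $\eta_1 B_4$ and the target class $z_2 \in \Ext^{*,*}(tmf)$ lifting $\tau^2 B\kappa\kappa_4$ (necessarily one homological degree higher, three stems and four in the internal $t$-degree higher to match the tridegree of $h_2$), and check in the known $\Ext_{\cA(2)}(\bF_2,\bF_2)$-module structure on $\Ext(tmf)$ — as recorded in \cite{BR21} — that $h_2 \cdot z_1 = z_2$. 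Once this $h_2$-multiplication is confirmed, the commuting $C\tau$-LES square with vertical maps labeled $z = h_2$ (source) and $\nu$ (target) yields the hidden extension $\nu\cdot\eta_1 B_4 = \tau^2 B\kappa\kappa_4$ directly.

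The main obstacle I expect is correctly reading off and matching the two $\Ext$ classes in this relatively high stem ($t \approx 129$), where the charts are dense and several classes can occupy nearby bidegrees; in particular one must be sure the $h_2$-multiplication is genuinely nonzero and lands on exactly the class detecting $\tau^2 B\kappa\kappa_4$ rather than a decomposable or a class killed by a differential. A secondary subtlety is verifying the vanishing of $\tau\cdot\eta_1 B_4$ and $\tau^3 B\kappa\kappa_4$: this relies on the $\tau$-torsion structure of the $E_\infty$-page, which is governed by the lengths of the classical Adams differentials via \hyperref[Theorem 2.8]{Theorem 2.8}, so I would cross-reference the relevant differentials from Table 3.2 (and their consequences) to confirm the torsion orders. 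If the direct $h_2$-multiplication is hard to pin down, a fallback is to derive the extension from a known classical hidden $\nu$-extension pulled back along $\tau$-inversion (method (2)) or from a product relation in $\pi_{*,*}(tmf)$ (method (1)), mirroring the strategy of Lemmas~\ref{n.46.11} and \ref{n.71.16}.
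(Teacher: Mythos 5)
There is a genuine gap in your main argument: method (3) applies only to elements that are annihilated by $\tau$, since it is the exactness of $\Ext^{w-t-2,w-1}(tmf)\to\pi_{t,w}(tmf)\xrightarrow{\tau}\pi_{t,w-1}(tmf)$ that produces the lift to $\Ext$. Your first step, ``check that $\tau\cdot\eta_1B_4$ vanishes,'' fails: $\eta_1B_4$ is detected by $w_1w_2^2\gamma$ (note that $B_4$ is detected by $w_1w_2^2$, not $\alpha g$, which detects $B_1$), and this class is a permanent cycle that is not a boundary --- it detects the nonzero \emph{classical} element $B_4\eta_1$ (compare the classical $\eta$-extension on $B_4\eta_1\kappabar$ at $t=149$ in Table A.2) --- so it generates a free $\bF_2[\tau]$-summand of $E_\infty$ and $\tau\cdot\eta_1B_4\neq 0$. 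Hence $\eta_1B_4$ has no lift to $\Ext$, and the proposed $h_2$-multiplication argument never gets started. Your fallback is also not viable: the target satisfies $\tau^3B\kappa\kappa_4=0$, so $B\kappa\kappa_4$ dies under $\tau$-localization and the classical product $\nu\cdot\eta_1B_4$ is zero. This extension has no classical shadow, so it cannot be pulled back from \cite{BR21} by method (2); that is exactly why the paper must prove it synthetically, unlike Lemmas~\ref{n.46.11} and~\ref{n.71.16}.

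The paper's proof routes around this by applying the $C\tau$-LES to an element that \emph{is} $\tau$-torsion, namely $\nu B_4\in\pi_{107,128}(tmf)$: one has $\nu B_4\neq 0$ but $\tau\nu B_4=0$, and together with $\tau^3B\kappa\kappa_4=0$ this lets both $\nu B_4$ and $\tau^2B\kappa\kappa_4$ lift to $\Ext$, to $w_2^2\alpha\in\Ext^{19,127}(tmf)$ and $w_2^2\alpha\gamma\in\Ext^{24,157}(tmf)$ respectively. These lifts differ by multiplication by $\gamma$, which is a permanent cycle detecting $\eta_1$, so the LES gives $\eta_1\cdot(\nu B_4)=\tau^2B\kappa\kappa_4$; commutativity $\nu\cdot(\eta_1B_4)=\eta_1\cdot(\nu B_4)$ then yields the lemma. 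Two features of this argument are absent from your proposal: the multiplication in the $C\tau$-LES is by $\eta_1\in\pi_{25,30}(tmf)$, an element of the homotopy of $\nu_{H\bF_2}tmf_2^{\wedge}$ itself rather than of the synthetic sphere (legitimate because the $C\tau$-LES is a sequence of $\pi_{*,*}(\nu_{H\bF_2}tmf_2^{\wedge})$-modules, with the action on $\Ext$ given by the detecting class), and the LES is applied to $\nu B_4$ rather than to the source $\eta_1B_4$ of the extension being proved.
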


\begin{proof}
Note that $\nu B_4\in\pi_{107,128}(tmf)$ is non-zero but $\tau\nu B_4=0$. We also have that $\tau^3\kappa\kappa_4B=0$ so that $\nu B_4$ and $\tau^2\kappa\kappa_4 B$ lift via the $C\tau$-LES to $\alpha w_2^2\in\Ext^{19,127}(tmf)$ and $\alpha\gamma w_2^2\in\Ext^{24,157}(tmf)$ respectively. These are related by a $\gamma$ multiplication. Since $\gamma$ is a permanent cycle which detects $\eta_1$, this then implies that $\nu\eta_1 B_4=\tau^2\kappa\kappa_4B$. 
\end{proof}

\begin{lemma}
\label{n.142.27}
There is a hidden $\nu$-extension from $\epsilon_5\kappa\in\pi_{142,169}(tmf)$ to \linebreak $\tau^2\nu_5\kappa B\in\pi_{145,173}(tmf)$.
\end{lemma}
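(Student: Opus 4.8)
The plan is to deduce this extension multiplicatively from a hidden $\nu$-extension on $\epsilon_5$ itself, in direct parallel with Lemma~\ref{n.46.11}, where $\nu\epsilon_1\kappa = \tau^2 B\nu_1\kappa$ was obtained from the extension $\nu\epsilon_1 = \tau^2 B\nu_1$ by multiplying by $\kappa$. The base input is the hidden $\nu$-extension
\[
\nu\epsilon_5 = \tau^2 B\nu_5 \in \pi_{131,155}(tmf),
\]
which I would obtain by the second of the three methods above. Classically $\nu\epsilon_5 = B\nu_5$ is a hidden $\nu$-extension recorded in \cite[Chapter 9]{BR21}, and since $\epsilon_5$ and $B\nu_5$ are detected in Adams filtrations $23$ and $26$ respectively, this extension is hidden of filtration jump $26-(23+1)=2$ beyond the naive $h_2$-multiplication, and so transports synthetically to $\nu\epsilon_5 = \tau^2 B\nu_5$. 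This is the $w_2^2$-periodic analog of $\nu\epsilon_1 = \tau^2 B\nu_1$: in $\Ext$ the classes $\epsilon_5$ and $\nu_5$ are detected by $w_2^2(\delta+\alpha g)$ and $w_2^2\alpha\beta$, i.e. the $w_2^2$-multiples of the detecting classes $\delta+\alpha g$ and $\alpha\beta$ of $\epsilon_1$ and $\nu_1$.

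Granting the base extension, $\pi_{*,*}(tmf)$-linearity of $\nu$-multiplication gives
\[
\nu(\epsilon_5\kappa) = (\nu\epsilon_5)\,\kappa = \tau^2 B\nu_5\kappa \in \pi_{145,173}(tmf),
\]
landing in the bidegree demanded by the source $\epsilon_5\kappa\in\pi_{142,169}(tmf)$ and the degree $(3,4)$ of $\nu$. To confirm that the extension is nontrivial I would check that $\tau^2 B\nu_5\kappa \neq 0$: Lemma~\ref{e.144.28} already records $\tau B\nu_5\kappa = \eta\kappa\kappa_4\kappabar \neq 0$, so it remains only to verify that $\tau$ is injective on this class, which is visible on the $E_\infty$-chart in Appendix~A.

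I expect the only real obstacle to be securing the classical base extension $\nu\epsilon_5 = B\nu_5$ and checking that it is genuinely nonzero (equivalently, that $B\nu_5$ survives and that $\nu\epsilon_5$ does not vanish for filtration reasons), together with the companion check that multiplication by $\kappa$ does not annihilate the target. Both of these are table and chart lookups in \cite{BR21} and Appendix~A rather than new synthetic computations, so once the base extension is in hand the remainder of the argument is the one-line multiplicativity statement above, exactly as in Lemma~\ref{n.46.11}.
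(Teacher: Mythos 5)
Your proof is correct and takes essentially the same route as the paper: the paper's one-line argument likewise pulls back the classical hidden $\nu$-extension on $\epsilon_5$ (i.e.\ $\nu\epsilon_5=\tau^2 B\nu_5$, via method (2)) and multiplies by $\kappa$, exactly in parallel with Lemma~\ref{n.46.11}. Note also that your target $\tau^2 B\nu_5\kappa$ is the right one --- the exponent in ``$\tau^2B\nu_5\kappa^2$'' in the lemma statement is a typo, as both the stated bidegree $\pi_{145,173}(tmf)$ and the corresponding entry of Table A.3 confirm.
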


\begin{proof}
This follows directly from the classical hidden $\nu$-extension $\nu\epsilon_5=\tau^2\nu_5\epsilon=\tau^2\nu_5B$.
\end{proof}

\begin{lemma}
\label{n.154.30}
There is a hidden $\nu$-extension from $\eta_1^2B_4\in\pi_{154,184}(tmf)$ to \linebreak $\tau^2\eta_1\kappa\kappa_4B\in\pi_{157,188}(tmf)$.
\end{lemma}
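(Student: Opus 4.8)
The plan is to deduce this extension directly from Lemma~\ref{n.129.25}, where the hidden $\nu$-extension $\nu\eta_1 B_4 = \tau^2 B\kappa\kappa_4$ in $\pi_{132,158}(tmf)$ is established, by multiplying through by $\eta_1\in\pi_{25,30}(tmf)$. Since $\pi_{*,*}(\nu_{H\bF_2}tmf_2^{\wedge})$ is a bigraded commutative ring, the $\nu$-linearity of multiplication gives
\[
\nu\cdot(B_4\eta_1^2) = \eta_1\cdot(\nu\eta_1 B_4) = \eta_1\cdot\tau^2 B\kappa\kappa_4 = \tau^2 B\eta_1\kappa\kappa_4 .
\]
First I would verify the bidegrees line up: $B_4\eta_1^2\in\pi_{154,184}$ together with $\nu\in\pi_{3,4}$ places the product in $\pi_{157,188}$, matching the claimed target, and $\eta_1\cdot\tau^2 B\kappa\kappa_4$ also lands in $\pi_{157,188}$. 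This mirrors the ``follows from a previous lemma'' style used in Lemma~\ref{t.74.16} and Lemma~\ref{t.164.32}, with the $\eta_1$-shift accounting precisely for the stem difference $154-129=25$.

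The only content beyond the formal ring multiplication is confirming that the right-hand side is nonzero, i.e. that $\tau^2 B\eta_1\kappa\kappa_4\neq 0$ in $\pi_{157,188}(tmf)$. I would check this against the $E_\infty$-chart in Appendix~A by locating the $E_\infty$-class detecting $\tau^2 B\eta_1\kappa\kappa_4$ and confirming that it survives and is not killed when the class detecting $\tau^2 B\kappa\kappa_4$ is multiplied by $\gamma$ (the permanent cycle detecting $\eta_1$). As an independent cross-check, one could instead run the $C\tau$-LES of method~(3): since $\tau(B_4\eta_1^2)$ vanishes, $B_4\eta_1^2$ lifts to a nonzero $\Ext$ class, and a $\gamma$-multiplication there (as in Lemma~\ref{n.129.25}) should realize the same target.

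The main obstacle is ensuring that the $\eta_1$-multiplication does not annihilate the target: \emph{a priori} $\eta_1$ could act by zero on $\tau^2 B\kappa\kappa_4$, in which case the deduced relation would be vacuous rather than a genuine hidden extension. Establishing nonvanishing is a bookkeeping check on the $E_\infty$-page of the $\nu H\bF_2$-Adams spectral sequence, together with the observation that in this bidegree no higher-filtration correction can absorb the product. Once nonvanishing is confirmed, the hidden $\nu$-extension follows immediately.
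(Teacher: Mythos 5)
Your proposal is correct and matches the paper's proof, which simply states that the lemma ``follows directly from Lemma~\ref{n.129.25}'' --- i.e.\ the same $\eta_1$-multiplication of the extension $\nu\eta_1 B_4=\tau^2 B\kappa\kappa_4$, with the nonvanishing of $\tau^2 B\eta_1\kappa\kappa_4$ an implicit chart check exactly as you describe. Your bidegree verification and $C\tau$-LES cross-check are sound but supplementary; no gap.
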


\begin{proof}
This follows directly from Lemma~\ref{n.129.25} by multiplying by $\eta_1$.
\end{proof}

\begin{lemma}
\label{n.160.30}
There is a hidden $\nu$-extension from $\eta_1^2\kappa_4\in\pi_{160,190}(tmf)$ to \linebreak $\tau^2\eta_1\kappa_4\kappabar B\in\pi_{163,194}(tmf)$.
\end{lemma}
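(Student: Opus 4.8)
The plan is to prove this exactly as in the neighboring $\nu$-extension arguments (Lemma~\ref{n.64.14} in particular) by running the $C\tau$-LES of method (3). The source $\eta_1^2\kappa_4$ is $\tau$-torsion and the proposed target carries a factor of $\tau^2$, which is precisely the situation in which the $C\tau$-LES is most effective. Since the $C\tau$-image of $\nu\in\pi_{3,4}$ is $h_2\in\Ext^{1,4}$, a genuine $h_2$-multiplication between the $C\tau$-lifts of the source and target will force the hidden $\nu$-extension via the commuting square of long exact sequences.

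First I would record the two vanishing facts $\tau\cdot\eta_1^2\kappa_4=0$ and $\tau^3\cdot B\eta_1\kappa_4\kappabar=0$, both read off the $E_\infty$-chart in Appendix A (the $\tau$-torsion orders are dictated by the lengths of the classical Adams differentials, per \hyperref[Theorem 2.8]{Theorem 2.8}). These vanishings place both classes in the image of the connecting map of the $C\tau$-LES, so each lifts to an honest $\Ext$ class. Following Lemma~\ref{e.160.30}, where the same source appears, $\eta_1^2\kappa_4$ lifts to $w_2^3e_0\in\Ext^{28,189}(tmf)$; I then expect $\tau^2B\eta_1\kappa_4\kappabar$ to lift to $h_2w_2^3e_0\in\Ext^{29,193}(tmf)$. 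These are visibly related by an $h_2$-multiplication, and feeding them into the $C\tau$-LES diagram of method (3) with $\alpha=\nu$ yields $\nu\cdot\eta_1^2\kappa_4=\tau^2B\eta_1\kappa_4\kappabar$. The degrees line up as a sanity check: $h_2$ raises stem by $3$, taking the stem of $w_2^3e_0$ (namely $161$) to the stem of $h_2w_2^3e_0$ (namely $164$), and the connecting map drops each stem by one, landing on $\pi_{160,190}$ and $\pi_{163,194}$ respectively, with internal $\Ext$-degree equal to the homotopy weight minus one in both cases.

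The main obstacle is the purely algebraic input: I must confirm in $\Ext_{\cA(2)}(\bF_2,\bF_2)$ that $h_2w_2^3e_0$ is nonzero and is exactly the $C\tau$-lift of $\tau^2B\eta_1\kappa_4\kappabar$ (equivalently, that the connecting map carries it to this homotopy class), together with the stated $\tau$-torsion orders. This is the kind of bookkeeping handled by the sseq $\Ext$ resolver, and it is corroborated by the $w_2$-periodic analogue in Lemma~\ref{n.64.14}: there $\eta_1^2\kappa$ lifts to $w_2e_0$ and $\tau^2B^2\nu_2$ lifts to $h_2w_2e_0$. Multiplying that $h_2$-relation through by $w_2^2$ gives $h_2w_2^3e_0=w_2^2\cdot h_2w_2e_0\neq 0$ (using that $w_2$ acts injectively here), which both establishes nonvanishing and reduces the relation to the already-known one; rewriting the target via the relation $B^2\nu_6=B\eta_1\kappa_4\kappabar$ recovers the present statement. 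I would present the direct $C\tau$-LES computation as the proof, with this $w_2^2$-multiplication serving as the consistency check that pins down the $\Ext$ identification.
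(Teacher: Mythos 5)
Your proposal is correct and follows essentially the same route as the paper: the paper's proof records exactly the vanishings $\tau\eta_1^2\kappa_4=0$ and $\tau^3B\eta_1\kappa_4\kappabar=0$, lifts the two classes via the $C\tau$-LES to $w_2^3e_0\in\Ext^{28,189}(tmf)$ and $h_2w_2^3e_0\in\Ext^{29,193}(tmf)$, and concludes from the $h_2$-multiplication. Your added $w_2^2$-periodicity check against Lemma~\ref{n.64.14} is a reasonable consistency argument but is not part of the paper's (shorter) proof, which simply asserts the nonvanishing of the two $\Ext$ classes.
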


\begin{proof}
This follows directly from the classical hidden $\nu$-extension $\nu\eta_1=\tau^2\kappabar B$ by multiplying by $\eta_1\kappa_4$.
\end{proof}

\begin{lemma}
\label{n.161.30}
There is a hidden $\nu$-extension from $\ttwo\nu_6\kappa\in\pi_{161,191}(tmf)$ to \linebreak $\tau D_4\kappabar^3B\in\pi_{164,195}(tmf)$.
\end{lemma}

\begin{proof}
The elements $\tau\ttwo\nu_6\kappa$ and $\tau^2 D_4\kappabar^3B$ are both zero and so via the $C\tau$-LES, $\ttwo\nu_6\kappa$ and $\tau D_4\kappabar^3B$ lift to the nonzero elements $h_2\beta w_2^3\in\Ext^{28,190}(tmf)$ and $h_2^2\beta w_2^3\in\Ext^{29,194}(tmf)$ respectively. These are related by an $h_2$ multiplication.
\end{proof}

\begin{lemma}
\label{n.167.32}
There is a hidden $\nu$-extension from $\eta_1\epsilon_5\kappa\in\pi_{167,199}(tmf)$ to $\tau^2\epsilon_5\kappa\kappabar B\in\pi_{170,205}(tmf)$.
\end{lemma}

\begin{proof}
This follows from the classical hidden $\nu$-extension $\nu\eta_1\epsilon_5=\tau^2\epsilon_5\kappabar B$.    
\end{proof}

\begin{lemma}
\label{n.172.33}
There is a hidden $\nu$-extension from $TB_6\kappabar\in\pi_{172,205}(tmf)$ to $\tau\eta_1^7\in\pi_{173,209}(tmf)$.
\end{lemma}

\begin{proof}
The elements $\tau TB_6\kappabar$ and $\tau^2\eta_1^7$ are both zero and so via the $C\tau$-LES, $TB_6\kappabar$ and $\tau\eta_1^7$ lift to the nonzero elements $\beta d_0w_2^3\in\Ext^{31,204}(tmf)$ and $h_2\beta d_0w_2^3\in\Ext^{32,208}(tmf)$ respectively. These are related by an $h_2$ multiplication.
\end{proof}

\subsection{Other extensions and relations}
\label{otherhidextsubsection}

In this subsection, we collect proofs of miscellaneous extensions and relations. In particular, we prove two synthetic hidden $\kappabar$-extensions and three synthetic relations.

\begin{lemma}
\label{k.120.23}
There is no hidden $\kappabar$-extension from $D_5\in\pi_{120,143}(tmf)$.
\end{lemma}

\begin{proof}
We use a Toda bracket argument to prove this. By \cite[Thm. 9.2]{BR21}, there is a classical Toda bracket
\begin{equation*}
    D_5=\langle \nu,\kappabar,4D_4\rangle
\end{equation*}
with no indeterminacy. Lifting up to $\Syn_{\h\bF_2}$ via $\tau^{-1}$, we get the Toda bracket
\begin{equation*}
    D_5=\langle \nu,\kappabar,\ttwo^2D_4\rangle.
\end{equation*}
Multiplying by $\kappabar$, we shuffle to get
\begin{equation*}
    D_5\cdot\kappabar=\langle \nu,\kappabar,\ttwo^2D_4\rangle\kappabar =\nu\langle\kappabar,\ttwo^2D_4,\kappabar\rangle.
\end{equation*}
By Adams chart inspection, $\pi_{137,163}(tmf)$ contains no $\tau$-power torsion. This means that the Toda bracket $\langle\kappabar,\ttwo^2D_4,\kappabar\rangle\subset\pi_{137,163}(tmf)$ can only contain elements of the form
\begin{equation*}
    a\nu_5\kappa +\tau B\cdot x
\end{equation*}
where $x\in\pi_{129,152}$ and $a\in\{0,1\}$. Since there are relations $\nu\cdot\nu_5=0$ and $\nu\cdot\tau B=0$, this must mean that $D_5\kappabar=0$.
\end{proof}

\begin{lemma}
\label{k.130.25}
There is a hidden $\kappabar$-extension from $T\kappa_4\kappabar=\ttwo\kappa_4\kappabar+\tau\eta_1^2\kappabar^4\in\pi_{130,155}(tmf)$ to $\tau\eta_1^2\kappabar^5\in\pi_{150,179}(tmf)$.
\end{lemma}

\begin{proof}
For the definition of $T\kappa_4$, see the proof of Lemma~\ref{r.110.21} below or Table~\ref{Einfpagetable}. The elements $\tau T\kappa_4\kappabar$ and $\tau^2\eta_1^2\kappabar^5$ are both zero and so via the $C\tau$-LES, $T\kappa_4\kappabar$ and $\tau\eta_1^2\kappabar^5$ lift to the nonzero elements $\beta gw_2^2 \in\Ext^{23,154}(tmf)$ and $\beta g^2w_2^2\in\Ext^{27,178}(tmf)$ respectively. These are related by a $g$ multiplication.    
\end{proof}

\begin{lemma}
\label{r.110.21}
There is a relation $\ttwo\cdot\kappa_4=D_4\kappa=T\kappa_4+\tau\eta_1^2\kappabar^3$ in $\pi_{110,131}(tmf)$.
\end{lemma}

\begin{proof}
Note that $T\kappa_4$ is by definition the $\tau$-torsion homotopy class in $\pi_{110,131}(tmf)$ in filtration 21 detected by $h_0d_0w_2^2$. This then follows from the classical hidden $2$- and $\kappa$-extension $2\cdot \kappa_4=\eta_1^2\kappabar^3=D_4\cdot\kappa$ \cite[Prop. 9.46]{BR21}. Synthetically, $\ttwo \kappa_4$ and $D_4\kappa$ are also detected by $h_0d_0w_2^2$ in ${}_{\syn}\E_{\infty}$ so that this isn't a hidden extension in the sense of Definition~\ref{hidextdef}.
\end{proof}

\begin{lemma}
\label{r.130.25}
There are relations $\ttwo\cdot\kappa_4\kappabar=D_4\kappa\kappabar=T\kappa_4\kappabar+\tau\eta_1^2\kappabar^4$ and $\eta\cdot\eta\epsilon_5=D_4\kappabar^2=T\kappa_4\kappabar+\tau\eta_1^2\kappabar^4$ in $\pi_{130,155}(tmf)$.    
\end{lemma}

\begin{proof}
For the first relation, the proof is the same as the proof of Lemma~\ref{r.110.21} after multiplying by $\kappabar$. The second relation follows from the relation $h_0d_0gw_2^2=h_1^2\delta'w_2^2$ in ${}_{\syn}\E_{\infty}$ and the classical hidden $\eta$-extension $\eta\cdot\eta\epsilon_5=\eta_1^2\kappabar^4$.  
\end{proof}

\newpage

\appendix

\FloatBarrier

\numberwithin{theorem}{subsection}

\section{Charts and Tables}

\maketitle

\numberwithin{table}{section}

\label{appendix}

In this section, we present Adams charts of $\pi_{k,w}\nu_{\h\bF_2}tmf$ for $0\leq k\leq 192$ and tables of ${}_{\syn}\E_{r}^{*,*,*}$ generators for $2\leq r\leq\infty$, non-cyclic modules of ${}_{\syn}\E_{r}^{*,*,*}$, algebra generators of ${}_{\syn}\E_{\infty}^{*,*,*}$ and $\pi_{*,*}\nu_{\h\bF_2}tmf$, module generators of $\pi_{*,*}\nu_{\h\bF_2}tmf$, all $\kappabar$-multiplications on elements detected by generators of ${}_{\syn}\E_{\infty}^{*,*,*}$, all $\ttwo$-, $\eta$-, and $\nu$-hidden extensions, and a table of some miscellaneous relations. The charts were produced using Hood Chatham's \texttt{SPECTRALSEQUENCES} LaTeX package.

\bigskip

For the charts, we use the following conventions:
\begin{itemize}
\item Black dots indicate copies of $\bF_2[\tau]$.
\item Red dots indicate copies of $\bF_2[\tau]/\tau\cong \bF_2$.
\item Blue dots indicate copies of $\bF_2[\tau]/\tau^2$.
\item Green dots indicate copies of $\bF_2[\tau]/\tau^3$.
\item Orange solid lines indicate $\ttwo$- and $\eta$-hidden extensions.
\item Orange dashed lines indicate $\nu$-hidden extensions.
\end{itemize}

The $x$-axis corresponds to the stem $k$ of $\pi_{k,w}(tmf)$ and the $y$-axis corresponds to the filtration of the $\nu\h\bF_2$-Adams spectral sequence. The curvature of a hidden extension has no mathematical meaning and is solely there for aesthetic reasons. The reader should also be wary that the target of a hidden extension in the chart is never the element itself but rather some $\tau$-multiple of that element. The vertical length of the extension determines which $\tau$-multiple it is.

\bigskip

We have included labels for several homotopy elements, particularly those which are sources of a hidden extension.

\bigskip

For the tables, we have included entries corresponding to

\begin{itemize}
    \item $k$: stem
    \item $w$: synthetic weight
    \item $s$: filtration
    \item $t$: internal degree
    \item $x$: name of element in $\Ext$ that survives to $\E_{\infty}$
    \item $\mathrm{Ann}(x)$: the $R_i'$-annihilator ideal of $\langle x\rangle$
    \item $d_r(x)$: synthetic $d_r$ differential on $x$, via \cite{BR21}
    \item $y$: name of homotopy element in $\pi_{k,w}$, following the notation of \cite{BR21}
    \item $\alpha$-extension: a hidden $\alpha$-extension calculation, following the notation of \cite{BR21}. If an entry is blank, then either there is a non-zero $\alpha$-multiplication detected in $\E_{\infty}$ or for degree reasons there is no possible hidden extension.
    \item Proof: where the hidden extension is proven. For proofs of classical hidden extensions, see \cite[Section 9.2]{BR21}.
\end{itemize}

\newpage

\begin{minipage}{.96\linewidth}

\includegraphics[width=\linewidth]{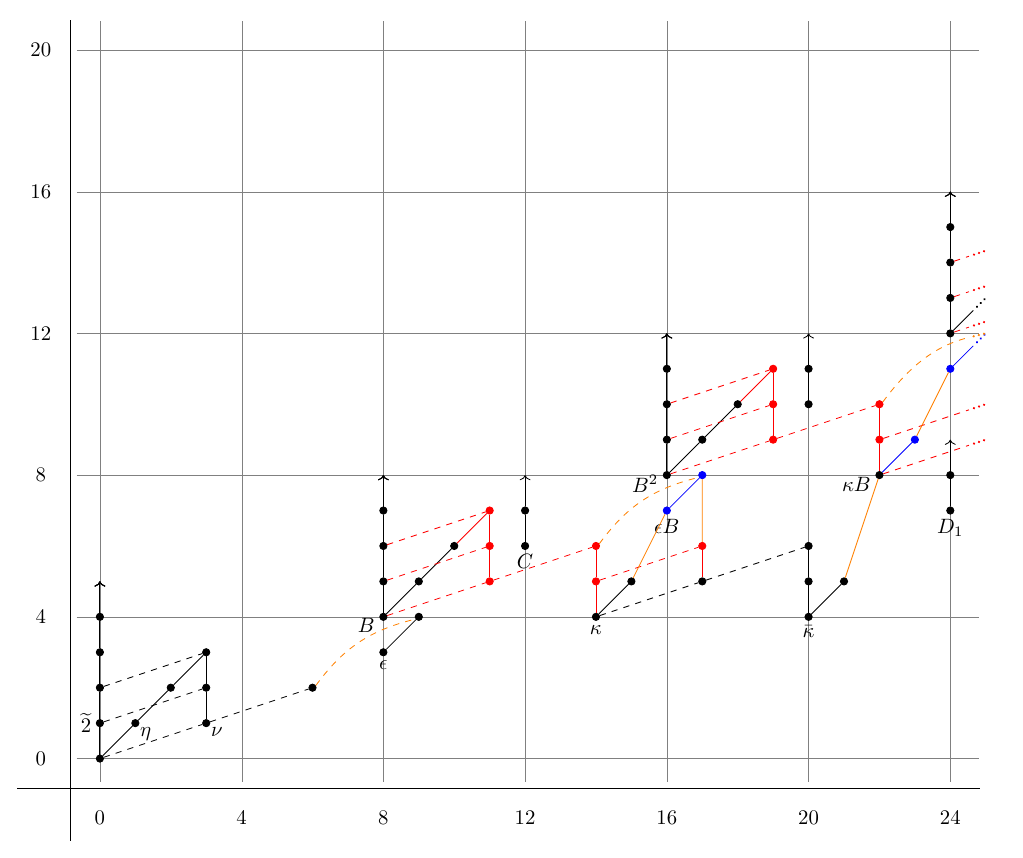}
\captionof{figure}{$\pi_{k,w}(tmf)$, $0\leq k\leq 24$}
\label{chart1}
\vspace{1cm}
\includegraphics[width=\linewidth]{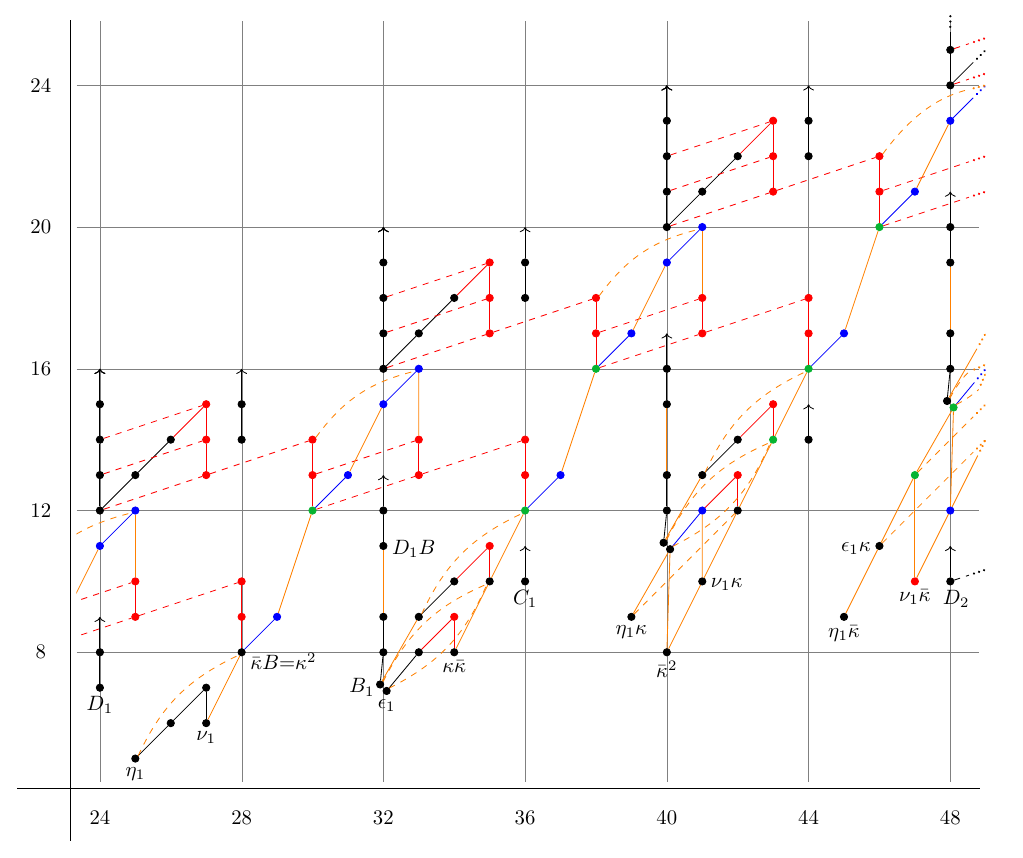}
\captionof{figure}{$\pi_{k,w}(tmf)$, $24\leq k\leq 48$}
\label{chart2}
\end{minipage}

\newpage

\begin{minipage}{.96\linewidth}
\includegraphics[width=\linewidth]{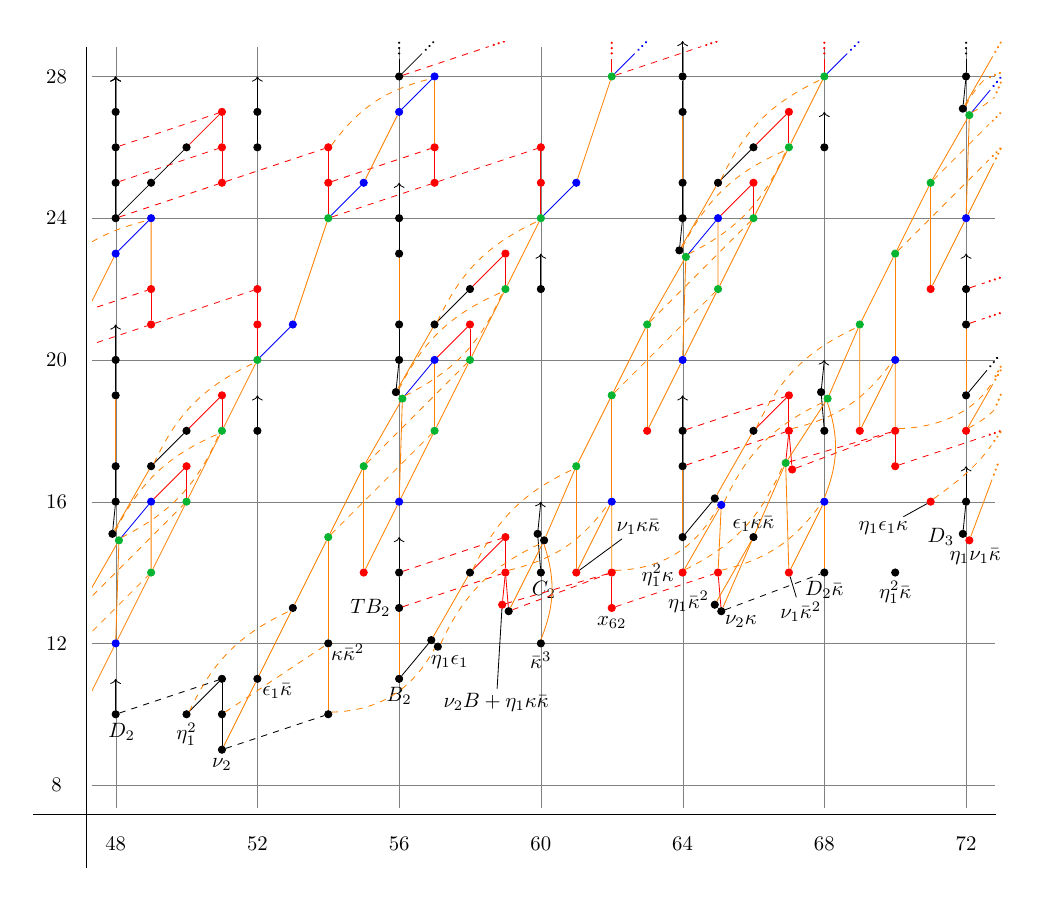}
\captionof{figure}{$\pi_{k,w}(tmf)$, $48\leq k\leq 72$}
\label{chart3}
\vspace{1cm}
\includegraphics[width=\linewidth]{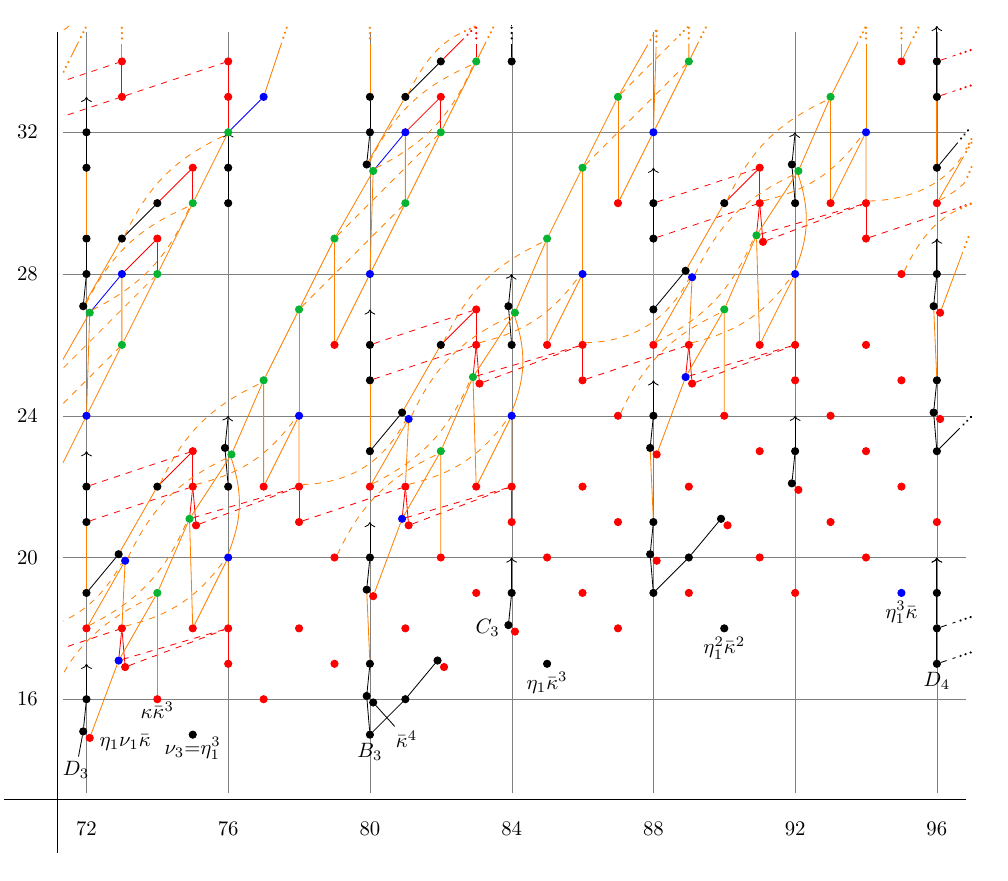}
\captionof{figure}{$\pi_{k,w}(tmf)$, $72\leq k\leq 96$}
\label{chart4}
\end{minipage}

\newpage

\begin{minipage}{.96\linewidth}
\includegraphics[width=\linewidth]{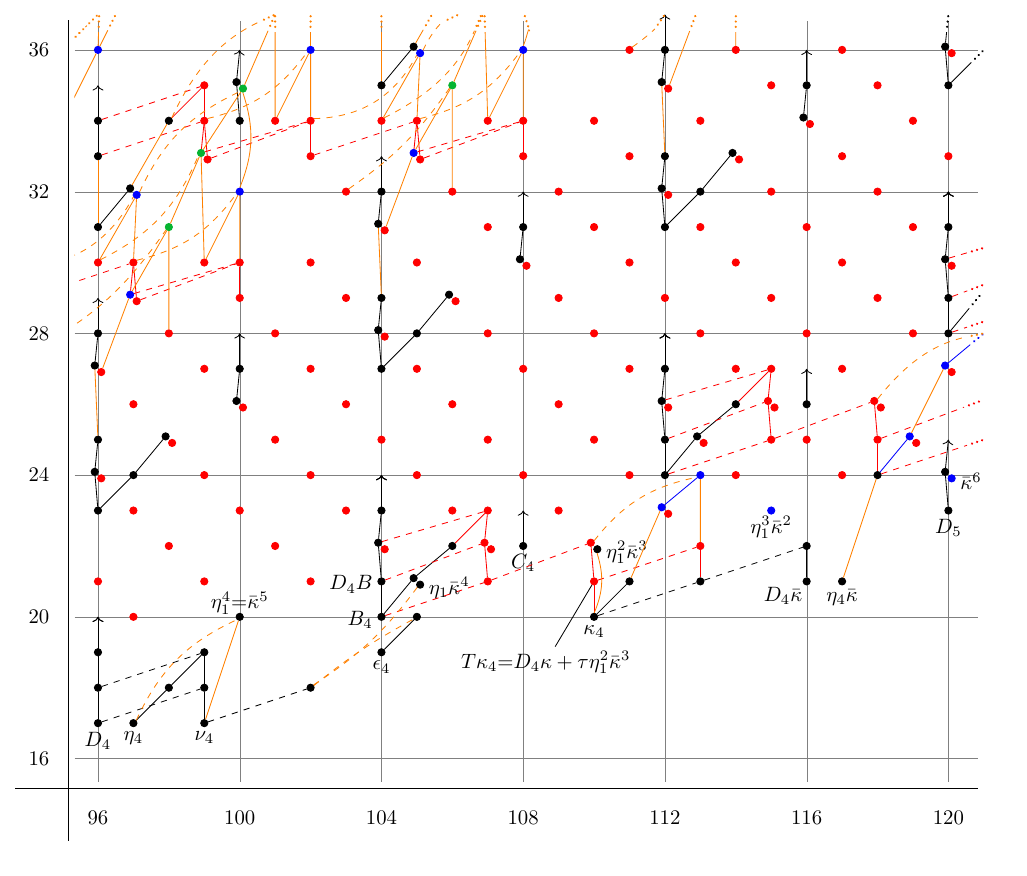}
\captionof{figure}{$\pi_{k,w}(tmf)$, $96\leq k\leq 120$}
\label{chart5}
\vspace{1cm}
\includegraphics[width=\linewidth]{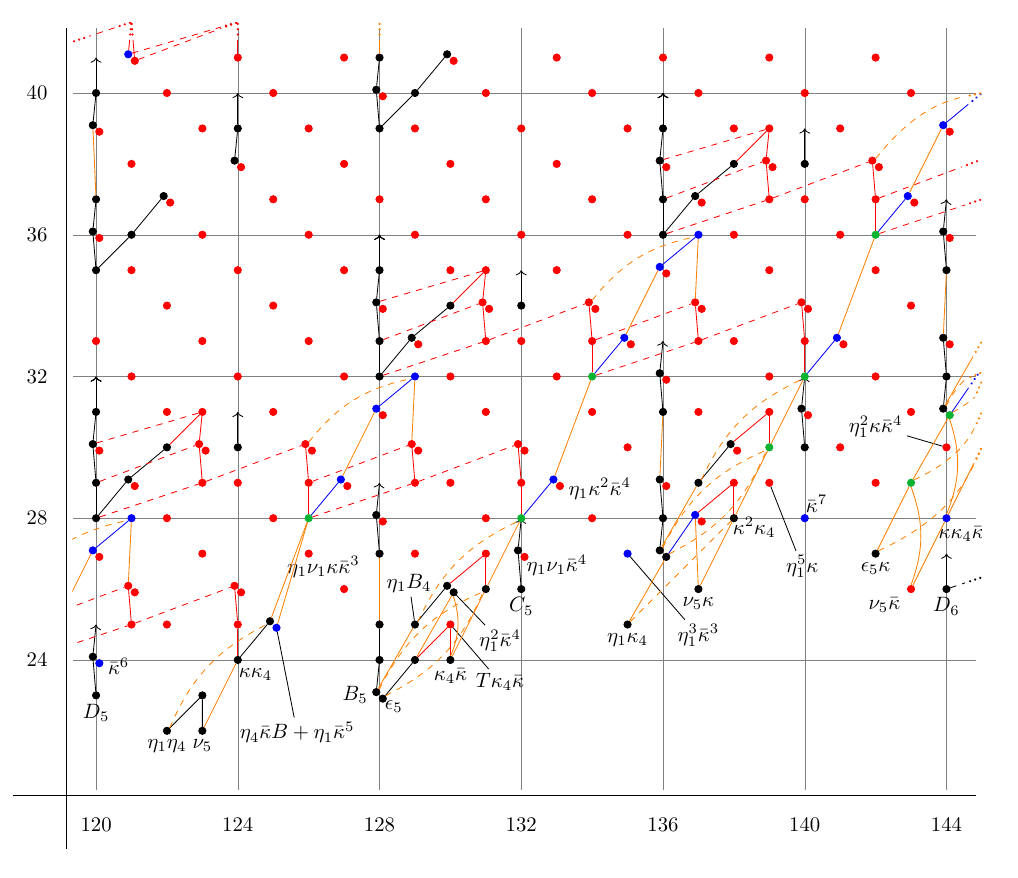}
\captionof{figure}{$\pi_{k,w}(tmf)$, $120\leq k\leq 144$}
\label{chart6}
\end{minipage}

\newpage

\begin{minipage}{.96\linewidth}
\includegraphics[width=\linewidth]{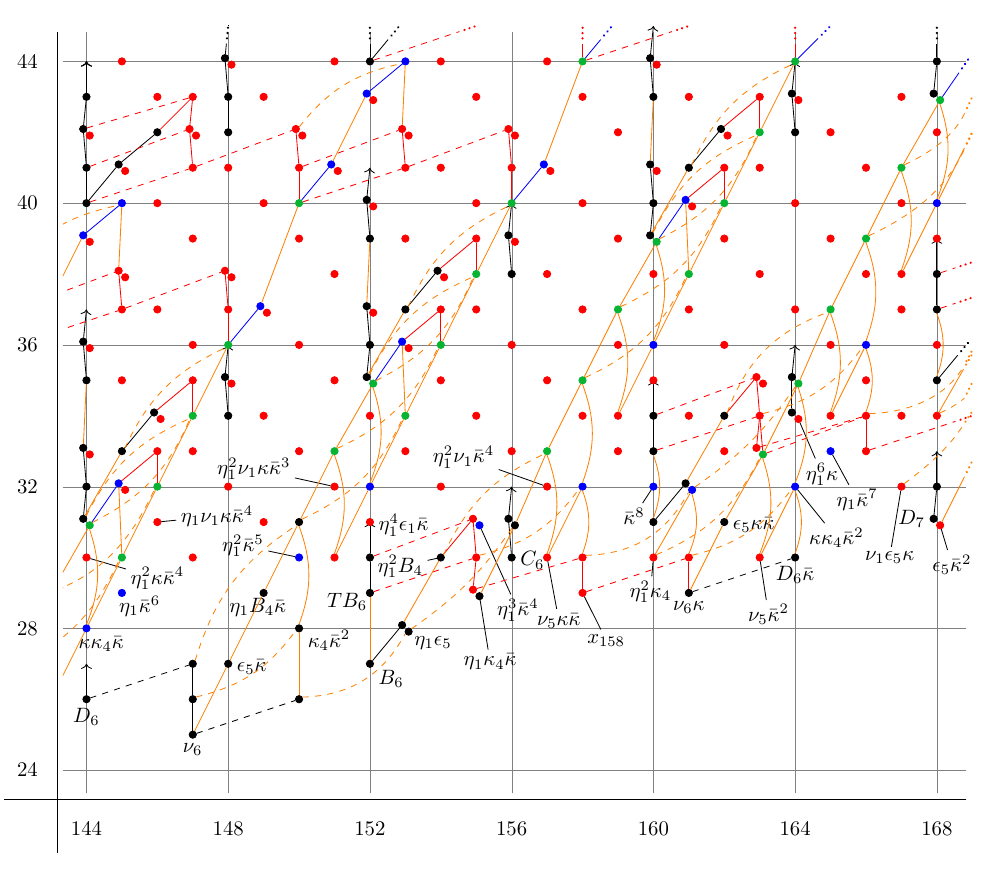}
\captionof{figure}{$\pi_{k,w}(tmf)$, $144\leq k\leq 168$}
\label{chart7}
\vspace{1cm}
\includegraphics[width=\linewidth]{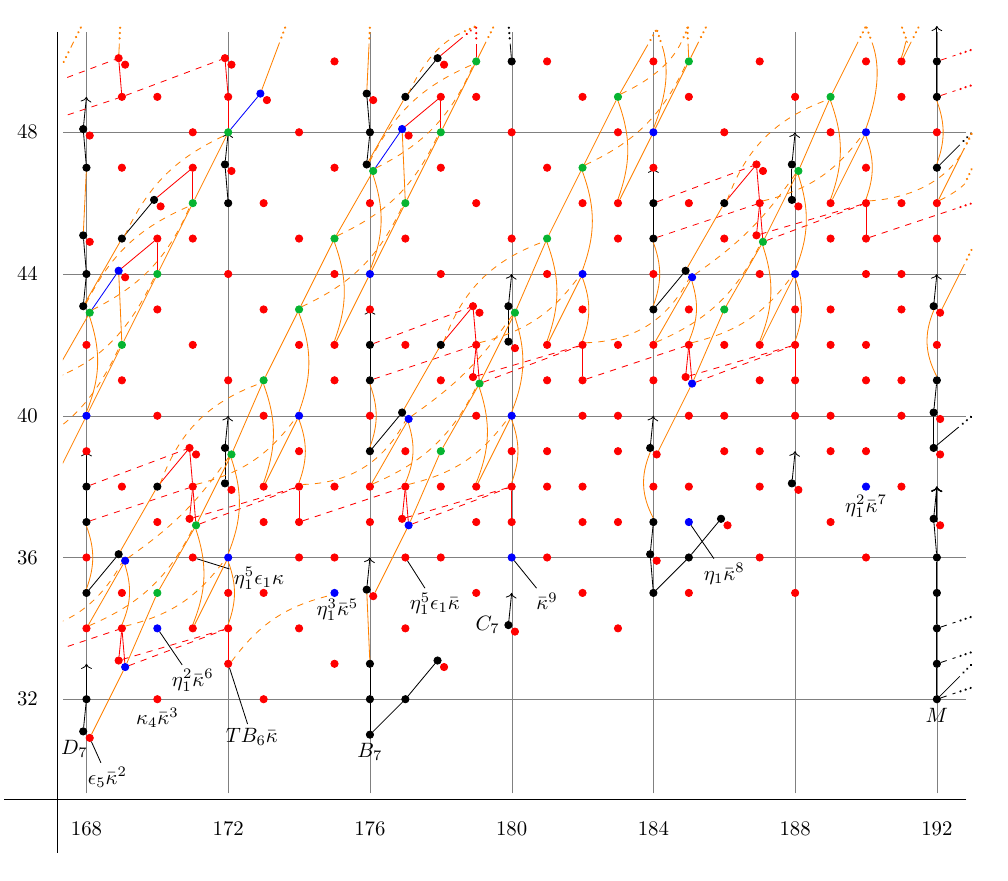}
\captionof{figure}{$\pi_{k,w}(tmf)$, $168\leq k\leq 192$}
\label{chart8}
\end{minipage}

\newpage

}}$ \\
$\langle d_0\gamma,h_2w_2\rangle \cong\dfrac{\Sigma^{9,48,48}R_2'\oplus\Sigma^{9,60,60}R_2'}{\langle (0,g),(\tau g,\tau w_1),(\tau^3w_1,0)\rangle}$                                                                                                    \\
$\langle \gamma w_1w_2^2,h_0e_0w_{2}^3\rangle \cong\dfrac{\Sigma^{25,154,154}R_2'\oplus\Sigma^{29,190,190}R_2'}{\langle (0,g),(\tau^2g^2,0),(\tau g^2,\tau w_1),(\tau^3gw_1,0)\rangle}$                                                                  \\
$\langle d_0\gamma w_2^2,h_2w_2^3\rangle \cong\dfrac{\Sigma^{25,160,160}R_2'\oplus\Sigma^{25,172,172}R_2'}{\langle (0,g),(\tau g,\tau w_1),(\tau^3w_1,0)\rangle}$ \\ \hline
\end{tabular}
\end{table}

\newpage


\end{table}

\newpage

\printbibliography

@book{Isa19,
  title={Stable stems},
  author={Isaksen, Daniel},
  volume={262},
  number={1269},
  year={2019},
  publisher={American mathematical society}
}

@article{IWX23,
  title={Stable homotopy groups of spheres: From dimension 0 to 90},
  author={Isaksen, Daniel C and Wang, Guozhen and Xu, Zhouli},
  journal={Publications math{\'e}matiques de l'IH{\'E}S},
  volume={137},
  number={1},
  pages={107--243},
  year={2023},
  publisher={Springer}
}

@article{Bur22,
  title={Synthetic cookware},
  author={Burklund, Robert},
  year={2022},
  journal={preparation, draft available on the author’s webpage}
}

@article{Bur20,
   title={An extension in the Adams spectral sequence in dimension 54},
   volume={53},
   url={http://dx.doi.org/10.1112/blms.12428},
   number={2},
   journal={Bulletin of the London Mathematical Society},
   publisher={Wiley},
   author={Burklund, Robert},
   year={2020},
   pages={404–407}
}

@article{Bau08,
  title={Computation of the homotopy of the spectrum tmf},
  author={Bauer, Tilman},
  journal={Groups, homotopy and configuration spaces},
  volume={13},
  pages={11--40},
  year={2008}
}

@article{Pst23,
  title={Synthetic spectra and the cellular motivic category},
  author={Pstr{\k{a}}gowski, Piotr},
  journal={Inventiones mathematicae},
  volume={232},
  number={2},
  pages={553--681},
  year={2023},
  publisher={Springer}
}

@article{GWX21,
      title={The special fiber of the motivic deformation of the stable homotopy category is algebraic}, 
      author={Bogdan Gheorghe and Guozhen Wang and Zhouli Xu},
      year={2021},
      journal={Acta Mathematica},
      volume={226, No. 2},
      pages={319-407}
      
}

@misc{BHS19,
      title={On the boundaries of highly connected, almost closed manifolds}, 
      author={Robert Burklund and Jeremy Hahn and Andrew Senger},
      year={2019},
      eprint={1910.14116},
      archivePrefix={arXiv},
     
}

@book {BR21,
    AUTHOR = {Bruner, Robert R. and Rognes, John},
     TITLE = {The {A}dams spectral sequence for topological modular forms},
    SERIES = {Mathematical Surveys and Monographs},
    VOLUME = {253},
 PUBLISHER = {American Mathematical Society, Providence, RI},
      YEAR = {2021},
     PAGES = {690},
   MRCLASS = {55N34 (18G40 55P43 55Q45 55T15)},
  MRNUMBER = {4284897},
}

@article {SI67,
    AUTHOR = {Shimada, Nobuo and Iwai, Akira},
     TITLE = {On the cohomology of some {H}opf algebras},
   JOURNAL = {Nagoya Math. J.},
  FJOURNAL = {Nagoya Mathematical Journal},
    VOLUME = {30},
      YEAR = {1967},
     PAGES = {103-111},
   MRCLASS = {18.20 (55.00)},
  MRNUMBER = {215896},
MRREVIEWER = {A. Liulevicius},
       URL = {http://projecteuclid.org/euclid.nmj/1118796803},
}

@book {DFHH14,
     LABEL = {DFHH},
     TITLE = {Topological modular forms},
    SERIES = {Mathematical Surveys and Monographs},
    VOLUME = {201},
    EDITOR = {Douglas, Christopher L. and Francis, John and Henriques, Andr\'{e}
              G. and Hill, Michael A.},
 PUBLISHER = {American Mathematical Society, Providence, RI},
      YEAR = {2014},
     PAGES = {xxxii-318},
     URL = {https://doi.org/10.1090/surv/201},
}

@article {Moss70,
    AUTHOR = {Moss, R. Michael F.},
     TITLE = {Secondary compositions and the {A}dams spectral sequence},
   JOURNAL = {Math. Z.},
  FJOURNAL = {Mathematische Zeitschrift},
    VOLUME = {115},
      YEAR = {1970},
     PAGES = {283--310},
      ISSN = {0025-5874},
   MRCLASS = {55.52},
  MRNUMBER = {266216},
MRREVIEWER = {J. P. May},
       DOI = {10.1007/BF01129978},
       URL = {https://doi.org/10.1007/BF01129978},
}

@article {Hov04,
    AUTHOR = {Hovey, Mark},
     TITLE = {Homotopy theory of comodules over a Hopf algebroid},
   JOURNAL = {Contemp. Math.},
    VOLUME = {346},
      YEAR = {2004},
     PAGES = {261-304},
      PUBLISHER = {American Mathematical Society, Providence, RI}
}

\end{document}